\definecolor{note_fontcolor}{rgb}{0, 0, 1}
\newenvironment{lyxgreyedout}
  {\textcolor{note_fontcolor}\bgroup\ignorespaces}
  {\ignorespacesafterend\egroup}
\theoremstyle{plain}
\newtheorem{thm}{\protect\theoremname}
  \theoremstyle{definition}
  \newtheorem{problem}[thm]{\protect\problemname}
  \theoremstyle{plain}
  \newtheorem{cor}[thm]{\protect\corollaryname}
  \theoremstyle{definition}
  \newtheorem{defn}[thm]{\protect\definitionname}
  \theoremstyle{remark}
  \newtheorem{rem}[thm]{\protect\remarkname}
  \theoremstyle{definition}
  \newtheorem{example}[thm]{\protect\examplename}
  \theoremstyle{plain}
  \newtheorem{lem}[thm]{\protect\lemmaname}
  \theoremstyle{remark}
  \newtheorem{claim}[thm]{\protect\claimname}
\newcommand{\xyR}[1]{
  \xydef@\xymatrixrowsep@{#1}}
\newcommand{\xyC}[1]{
  \xydef@\xymatrixcolsep@{#1}}
\let\myTOC\tableofcontents
\renewcommand\tableofcontents{%
  \pdfbookmark[1]{\contentsname}{}
  \myTOC }
\def\LyX{\texorpdfstring{%
  L\kern-.1667em\lower.25em\hbox{Y}\kern-.125emX\@}
  {LyX}}
  \providecommand{\claimname}{Claim}
  \providecommand{\corollaryname}{Corollary}
  \providecommand{\definitionname}{Definition}
  \providecommand{\examplename}{Example}
  \providecommand{\lemmaname}{Lemma}
  \providecommand{\problemname}{Problem}
  \providecommand{\remarkname}{Remark}
\providecommand{\theoremname}{Theorem}
\begin{document}

\title{On Gaussian multiplicative chaos}

\author{Alexander Shamov\thanks{E-mail address: \protect\href{mailto:trefoils@gmail.com}{trefoils@gmail.com}}}

\date{July 16, 2014\\
revised Mar 08, 2016}

\maketitle
\noindent %
\begin{lyxgreyedout}
\global\long\def\op#1{\operatorname{#1}}
 \global\long\def\midmid{\mathrel{}\middle|\mathrel{}}
 \global\long\def\NN{\mathbb{N}}
 \global\long\def\ZZ{\mathbb{Z}}
 \global\long\def\QQ{\mathbb{Q}}
 \global\long\def\RR{\mathbb{R}}
 \global\long\def\CC{\mathbb{C}}
 \global\long\def\HH{\mathbb{H}}
 \global\long\def\KK{\mathbb{K}}
 \global\long\def\FF{\mathbb{F}}
 \global\long\def\P{\op{\mathsf{P}}}
 \global\long\def\Q{\op{\mathsf{Q}}}
 \global\long\def\E{\op{\mathsf{E}}}
 \global\long\def\OMEGA{\op{\mathsf{\Omega}}}
 \global\long\def\I{\op{\mathsf{1}}}
 \global\long\def\Var{\op{\mathsf{Var}}}
 \global\long\def\H{\op{\mathsf{H}}}
 \global\long\def\Law{\op{\mathsf{Law}}}
 \global\long\def\Cov{\op{\mathsf{Cov}}}
 \global\long\def\Pc#1#2{\P\left\{  #1\midmid#2\right\}  }
 \global\long\def\Pcsq#1#2{\P\left[#1\midmid#2\right]}
 \global\long\def\Ec#1#2{\E\left[#1\midmid#2\right]}
 \global\long\def\Ecm#1#2#3{\E_{#1}\left[#2\midmid#3\right]}
 \global\long\def\Varc#1#2{\Var\left[#1\midmid#2\right]}
 \global\long\def\Hc#1#2{\H\left[#1\midmid#2\right]}
 \global\long\def\Lawc#1#2{\Law\left[#1\midmid#2\right]}
 \global\long\def\Lawcm#1#2#3{\Law_{#1}\left[#2\midmid#3\right]}
 \global\long\def\Covc#1#2{\Cov\left[#1\midmid#2\right]}
 \global\long\def\Wick#1{{:\!#1\!:}}
 \global\long\def\norm#1{\left\Vert #1\right\Vert }
 \global\long\def\const{\op{const}}
 \global\long\def\diam{\op{diam}}
 \global\long\def\diag{\op{diag}}
 \global\long\def\det{\op{det}}
 \global\long\def\per{\op{per}}
 \global\long\def\pf{\op{pf}}
 \global\long\def\sgn{\op{sgn}}
 \global\long\def\Leb{\op{Leb}}
 \global\long\def\tr{\op{tr}}
 \global\long\def\span{\op{span}}
 \global\long\def\supp{\op{supp}}
 \global\long\def\Spec{\op{Spec}}
 \global\long\def\rank{\op{rank}}
 \global\long\def\im{\op{im}}
 \global\long\def\coker{\op{coker}}
 \global\long\def\coim{\op{coim}}
 \global\long\def\colim{\op{colim}}
 \global\long\def\cone{\op{cone}}
 \global\long\def\cyl{\op{cyl}}
 \global\long\def\Hom{\op{Hom}}
 \global\long\def\Ext{\op{Ext}}
 \global\long\def\Tor{\op{Tor}}
 \global\long\def\RKHS{\op{RKHS}}
 \global\long\def\pr{\op{pr}}
 \global\long\def\id{\op{id}}
 \global\long\def\Sets{\op{\mathbf{Sets}}}
 \global\long\def\FinSets{\op{\mathbf{FinSets}}}
 \global\long\def\Groups{\op{\mathbf{Groups}}}
 \global\long\def\AbGroups{\op{\mathbf{AbGroups}}}
 \global\long\def\Mod{\op{\mathbf{Mod}}}
 \global\long\def\Modl#1{_{#1}\mathbf{Mod}}
 \global\long\def\Modr#1{\Mod_{#1}}
 \global\long\def\Modb#1#2{_{#1}\mathbf{Mod}_{#2}}
 \global\long\def\Top{\op{\mathbf{Top}}}
\end{lyxgreyedout}

\begin{abstract}
We propose a new definition of the Gaussian multiplicative chaos and
an approach based on the relation of subcritical Gaussian multiplicative
chaos to randomized shifts of a Gaussian measure. Using this relation
we prove general results on uniqueness and convergence for subcritical
Gaussian multiplicative chaos that hold for Gaussian fields with arbitrary
covariance kernels.

Keywords: Gaussian multiplicative chaos; Random measures; Gaussian
measures

MSC 2010 subject classification: 60G15, 60G57, 60B10
\end{abstract}
\tableofcontents{}

\section{Introduction}

\subsection{The object of interest}

Let $\left(\mathcal{T},\mu\right)$ be a finite measure space, and
let $X=\left(X\left(\omega,t\right)\right)_{\omega\in\OMEGA,t\in\mathcal{T}}$
be a Gaussian field parametrized by $t\in\mathcal{T}$ and defined
on a probability space $\left(\OMEGA,\P\right)$. With this data one
can associate the following random measure:
\begin{equation}
M\left(dt\right):=\exp\left[X\left(t\right)-\frac{1}{2}\E\left|X\left(t\right)\right|^{2}\right]\mu\left(dt\right).\label{eq:FormalGMC}
\end{equation}
The Gaussian multiplicative chaos (GMC) is the natural generalization
of such a random measure to the setting when the field $\left(X\left(t\right)\right)$
is defined in a distributional sense rather than pointwise, i.e. via
a family of formal ``integrals'' against test functions from an
appropriate class. Obviously, for such generalized Gaussian fields
\eqref{eq:FormalGMC} does not make sense literally, since $X\left(t\right)$
need not be well-defined as a random variable for any particular $t$.
Accordingly, in nontrivial cases $M$ is almost surely $\mu$-singular,
so the density $M\left(dt\right)/\mu\left(dt\right)$ is not well-defined
either.

The commonly used ways of interpreting \eqref{eq:FormalGMC} rigorously
and constructing such random measures proceed by approximating the
field $X$ by Gaussian fields $X_{n}$ that are, unlike $X$, defined
pointwise. One defines a GMC $M$ as a limit, in an appropriate topology,
of random measures
\begin{equation}
M_{n}\left(dt\right):=\exp\left[X_{n}\left(t\right)-\frac{1}{2}\E\left|X_{n}\left(t\right)\right|^{2}\right]\mu\left(dt\right).\label{eq:ApproxGMC}
\end{equation}
This approach leads naturally to the following problems, both of which
will be addressed in this paper.
\begin{problem}
Find conditions on the approximation $X_{n}\to X$ that are sufficient
for convergence of $M_{n}$. \label{prob:Convergence}
\end{problem}

\begin{problem}
Prove that the limit is independent of the approximation procedure.
\label{prob:Uniqueness}
\end{problem}
As far as we know, in previous works these problems have only been
partially solved under unnecessarily restrictive assumptions. Below
we provide an overview of the commonly used approximation procedures.

\subsubsection{Martingale approximation \cite{KahaneSur}}

The martingale approximation is employed in Kahane's original work
on GMC in \cite{KahaneSur}. In his construction the increments $X_{n}-X_{n-1}$
are independent (and $X_{0}:=0$), which implies that $\left(M_{n}\right)$
is a positive measure-valued martingale. The martingale property guarantees
that $M_{n}$ converges to a random measure $M$ in the sense that
$M_{n}\left[A\right]\to M\left[A\right]$ almost surely for any fixed
measurable set $A$. Moreover, $\E M=\mu$ iff the martingale $\left(M_{n}\left[\mathcal{T}\right]\right)$
is uniformly integrable, in which case the limit $M$ is taken as
the interpretation of \eqref{eq:FormalGMC}.

One intuitively expects the martingale approximation to yield the
``right'' and completely general notion of subcritical GMC. However,
Kahane's work falls short of establishing the basic setup in sufficient
generality.

The construction in \cite{KahaneSur} takes as its input a function
$K:\mathcal{T}\times\mathcal{T}\to\RR_{+}\cup\left\{ \infty\right\} $
--- thought of as the covariance kernel of the Gaussian field $X$.
$K$ is assumed to be decomposable into a sum
\begin{equation}
K\left(t,s\right):=\sum_{n}p_{n}\left(t,s\right)\label{eq:SigmaPositive}
\end{equation}
of kernels $p_{n}:\mathcal{T}\times\mathcal{T}\to\RR_{+}$ that are
both positive definite and \emph{positive in the pointwise sense}
(i.e. $p_{n}\left(t,s\right)\ge0,\forall t,s$). We may assume that
$K$ is finite $\mu\otimes\mu$-almost everywhere, but it may explode
on the diagonal. Unlike the sum $K$, the kernels $p_{n}$ are only
allowed to take finite values, so that indeed there are independent
Gaussian fields $X_{n}-X_{n-1}$ with covariance 
\[
\E\left(X_{n}\left(t\right)-X_{n-1}\left(t\right)\right)\left(X_{n}\left(s\right)-X_{n-1}\left(s\right)\right):=p_{n}\left(t,s\right).
\]
The kernels $K$ that admit a representation \eqref{eq:SigmaPositive}
(with $p_{n}$ continuous with respect to a given compact metrizable
topology on $\mathcal{T}$) are said to be of $\sigma$-positive type.
Under this $\sigma$-positivity assumption it is proved in \cite{KahaneSur}
that the \emph{law} of the limiting random measure $M$ is independent
of the decomposition \eqref{eq:SigmaPositive}. 

One problem with this approach is that due to the pointwise positivity
assumption $p_{n}\left(t,s\right)\ge0$, $\sigma$-positivity is both
unnecessarily restrictive and hard to check in practice. Another problem
is that while $M$ is naturally defined on the same probability space
with the underlying Gaussian randomness, Kahane only proves uniqueness
in law rather than uniqueness of $M$ as a function of the Gaussian
field. That is, his result does not rule out the possibility that
different decompositions of $X$ yield different random measures with
the same law.

\subsubsection{Mollifying operators \cite{RVRevisited,DuShKPZ,RVReview}}

This approximation technique restricts the generality to an important
special case where $\mathcal{T}$ is a domain in $\RR^{d}$, $\mu$
is the Lebesgue measure, and the covariance kernel $K$ of the field
$X$ has the special form
\begin{equation}
K\left(t,s\right):=\gamma^{2}\log^{+}\norm{t-s}^{-1}+g\left(t,s\right),\label{eq:LogKernel}
\end{equation}
where $\log^{+}:=\max\left(\log,0\right)$, the function $g:\mathcal{T}\times\mathcal{T}\to\RR$
is bounded and continuous, and $\gamma^{2}<2d$. In this case $X$
is obviously well-defined as a random distribution, i.e. it can be
integrated against smooth test functions. The fields $X_{n}$ are
constructed by convolution:
\begin{equation}
X_{n}\left(t\right):=\intop_{\mathcal{T}}X\left(t^{\prime}\right)\psi_{1/n}\left(t-t^{\prime}\right)dt^{\prime},\label{eq:Convolution}
\end{equation}
\[
\psi_{1/n}\left(x\right):=n^{d}\psi\left(nx\right),
\]
where $\psi:\RR^{d}\to\RR_{+}$, $\intop\psi\left(t\right)dt=1$,
subject to appropriate smoothness conditions. This approximation method
was used in \cite{RVRevisited} for stationary fields on $\mathcal{T}=\RR^{d}$,
and according to \cite{RVReview}, the same techniques apply to the
non-stationary setting. Unlike in Kahane's approach, for convolution
approximations the convergence of $M_{n}$ is a nontrivial fact. In
\cite{RVRevisited} it is proved that $M_{n}$ converges in law to
some $M$, and that $\Law M$ is independent of the choice of the
mollifier $\psi$. Naturally, one expects the stronger result that
$M_{n}$ converges almost surely or at least in probability rather
than just in law. Similarly, the random measure $M$ should be unique
not just in law but as a function of the Gaussian field $X$.

In \cite{DuShKPZ} a related construction with circle averages was
used in the special case where $X$ is the Gaussian free field in
dimension 2. In this very special setting the authors prove almost
sure convergence of $M_{n}$.

\subsection{The new definition}

In this paper we introduce a new definition of GMC that is based on
our view of $M$ as a function of the field $X$ rather than a standalone
random measure.

Recall that a Cameron-Martin vector (or admissible shift) of the Gaussian
field $X$ is a deterministic function $\xi$ on $\mathcal{T}$, such
that the distribution of $X+\xi$ is absolutely continuous with respect
to that of $X$\footnote{The Cameron-Martin space is dual to the Hilbert space of measurable
linear functionals of the field, i.e. ``test functions''. For the
reason explained in Appendix in our setting the space of test functions
contains $L^{2}\left(\mathcal{T},\mu^{\prime}\right)$ for some equivalent
measure $\mu^{\prime}\sim\mu$, so that Cameron-Martin vectors are
representable by $\mu$-equivalence classes of functions.}. We denote by $H$ the space of Cameron-Martin vectors.

Our starting point is the following basic observation: the ``exponential''
behavior \eqref{eq:FormalGMC} of a GMC $M$ can be characterized
by the way $M$ changes when the field $X$ is shifted by Cameron-Martin
vectors. Namely, for all $\xi\in H$ the following should hold almost
surely:
\begin{equation}
M\left(X+\xi,dt\right)=e^{\xi\left(t\right)}M\left(X,dt\right).\label{eq:DefGMC4Field}
\end{equation}
This property is taken as our definition of GMC. A GMC is called \emph{subcritical}
if $\E M$ is $\sigma$-finite, in which case we will often assume
for convenience that $\E M=\mu$. This does not restrict generality,
as will be explained in Remark \ref{rem:EM}. In this paper we only
deal with subcritical GMC theory.

One obvious feature of our definition is that unlike the previous
ones, it is not tied to any particular \emph{construction} of GMC.
On the other hand, for any particular construction that exhibits $M$
as a function of $X$ it is typically easy to check that it satisfies
\eqref{eq:DefGMC4Field}, at least for a dense subspace of Cameron-Martin
shifts, which turns out to be enough. This facilitates the comparison
of different constructions; in particular, the seemingly complicated
problem of independence of the approximation procedure reduces to
the uniqueness problem for our notion of subcritical GMC, which turns
out to be remarkably easy.

In terms of generality, our notion of GMC includes both Kahane's GMC
\cite{KahaneSur} and the subcritical and critical GMC over logarithmic
fields as constructed in \cite{RVRevisited,DuShKPZ,RVReview}, with
the caveat that it retains information about the dependence on the
underlying Gaussian field. On the other hand, it does not include
distributional limits of GMCs that are not measurable with respect
to the Gaussian field even when properly coupled to it, most notably
the atomic supercritical GMC \cite{MRVSupercritical,BJRVKPZ}, nor
does it include the complex GMC \cite{LRVComplex}, which is not a
random positive measure but rather a complex-valued field of almost
surely infinite variation.

\subsection{Approximation}

The main result of the paper is a general approximation theorem for
subcritical GMC. A version of it can be stated as follows.

We assume that $X$ is defined on test functions in $L^{2}\left(\mu\right)$,
so that the Cameron-Martin space $H$ is embedded into $L^{2}\left(\mu\right)$.
Let $A_{n}:H\to L^{2}\left(\mu\right)$ be bounded operators that
converge strongly to the identity embedding $H\to L^{2}\left(\mu\right)$.
We use them to define the approximating fields $X_{n}:=A_{n}X$:
\[
\intop X_{n}\left(t\right)f\left(t\right)\mu\left(dt\right):=\intop X\left(t\right)A_{n}^{\ast}f\left(t\right)\mu\left(dt\right).
\]
Assume that the covariance kernels of the fields $X$ and $X_{n}$
are Hilbert-Schmidt, i.e. there are functions $K_{n},K\in L^{2}\left(\mathcal{T}\times\mathcal{T},\mu\otimes\mu\right)$,
such that for all $f\in L^{2}\left(\mu\right)$ we have 
\[
\E\left(\intop X_{n}\left(t\right)f\left(t\right)\mu\left(dt\right)\right)^{2}=\intop K_{n}\left(t,s\right)f\left(t\right)f\left(s\right)\mu\left(dt\right)\mu\left(ds\right),
\]
and similarly for $X_{n}$ and $K_{n}$ replaced by $X$ and $K$.
Assume that $K_{n}\to K$ in measure ($\mu\otimes\mu$).
\begin{thm}
If there exist subcritical GMCs $M_{n}$ over the fields $X_{n}$
with the same expectation $\mu$, and $\left\{ M_{n}\left[\mathcal{T}\right]\right\} $
are uniformly integrable then there exists a subcritical GMC $M$
over $X$ with expectation $\mu$, and $M_{n}\to M$ in the sense
that for every $f\in L^{1}\left(\mu\right)$
\[
\intop f\left(t\right)M_{n}\left(X_{n},dt\right)\overset{L^{1}\left(\OMEGA,\P\right)}{\to}\intop f\left(t\right)M\left(X,dt\right).
\]
\label{thm:ApproximationA}
\end{thm}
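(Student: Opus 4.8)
The plan is to recast the statement in terms of the randomized-shift picture and reduce convergence of the $M_{n}$ to convergence of associated rooted measures on $\OMEGA\times\mathcal{T}$. For a subcritical GMC $N$ over a Gaussian field $Y$ with $\E N=\mu$, define the finite measure $\Theta_{N}(d\omega,dt):=\P(d\omega)\,N(Y(\omega),dt)$; its $\mathcal{T}$-marginal is $\E N=\mu$ and its $\OMEGA$-marginal is $N[\mathcal{T}]\,\P$, absolutely continuous with respect to $\P$ with density $N[\mathcal{T}]$. In the correspondence between subcritical GMC and randomized shifts, the defining relation $N(Y+\eta,dt)=e^{\eta(t)}N(Y,dt)$ is equivalent to a quasi-invariance (cocycle) relation for $\Theta_{N}$: shifting $Y$ by $\eta\in H$, then reweighting by the product of the Cameron--Martin density and $e^{-\eta(t)}$, preserves $\Theta_{N}$. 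Moreover $N$ is recovered from $\Theta_{N}$ by disintegrating over the $\OMEGA$-marginal, and $\Theta_{N}$ determines $N$ uniquely. I will therefore work with $\Theta_{n}:=\Theta_{M_{n}}$, observing that shifting $X$ by $\eta\in H$ shifts $X_{n}=A_{n}X$ by $A_{n}\eta$, so that the cocycle satisfied by $\Theta_{n}$ carries the weight $e^{-(A_{n}\eta)(t)}$ in place of $e^{-\eta(t)}$.

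First I would extract a limit. The measures $\Theta_{n}$ all have the same $\mathcal{T}$-marginal $\mu$, while their $\OMEGA$-marginals have densities $M_{n}[\mathcal{T}]$ which by hypothesis are uniformly integrable; by the Dunford--Pettis theorem these densities are relatively weakly compact in $L^{1}(\OMEGA,\P)$, so the family $\{\Theta_{n}\}$ is relatively compact and no mass can escape. Passing to a subsequence, $\Theta_{n}\to\Theta$ with $\mathcal{T}$-marginal still $\mu$. Next I would identify $\Theta$ by passing the cocycle relation to the limit. Strong convergence $A_{n}\to\id$ on $H$ gives $(A_{n}\eta)(t)\to\eta(t)$ in $L^{2}(\mu)$, hence $\mu$-almost everywhere along a further subsequence, so the weights $e^{-(A_{n}\eta)(t)}$ converge to $e^{-\eta(t)}$; the hypothesis $K_{n}\to K$ in measure guarantees that the Gaussian data of $X_{n}$ converge to those of $X$, so the Cameron--Martin densities converge as well, and uniform integrability is used once more to pass to the limit despite the unbounded factor in the Cameron--Martin density. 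The limit $\Theta$ therefore satisfies the cocycle relation for $X$ with the correct weight $e^{-\eta(t)}$, so by the uniqueness half of the correspondence $\Theta=\Theta_{M}$ for a unique subcritical GMC $M$ over $X$, and its $\mathcal{T}$-marginal being $\mu$ gives $\E M=\mu$. Since the limit is unique, the full sequence converges.

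It remains to upgrade this to $L^{1}(\OMEGA,\P)$ convergence of $\int f\,dM_{n}$, and here I expect the main obstacle. Testing $\Theta_{n}\to\Theta$ against functions $\phi(\omega)g(t)$ with $\phi$ bounded yields $\E[\phi\int g\,dM_{n}]\to\E[\phi\int g\,dM]$, that is, weak convergence in $L^{1}(\OMEGA,\P)$ of $\int g\,dM_{n}$ to $\int g\,dM$ for bounded $g$, both being $\sigma(X)$-measurable. The difficulty is that weak $L^{1}$ convergence, even together with uniform integrability and equality of means, does not by itself yield strong convergence: one genuinely needs convergence in probability. Crucially, this cannot be obtained by comparing the rooted laws pointwise in $t$, since for singular covariance shifts the conditional laws of $\Theta_{n}$ and $\Theta$ over $t$ are mutually singular; only the $t$-averaged densities converge, which is precisely the phenomenon making the theorem nontrivial and forcing the use of uniform integrability.

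To overcome this, I would establish convergence in probability of the $\OMEGA$-marginal densities $M_{n}[\mathcal{T}]$, and of $\int g\,dM_{n}$ for bounded $g$, by extracting an almost-everywhere convergent subsequence of these $t$-averaged densities and applying a Scheff\'e-type argument, using nonnegativity together with the already-established convergence of the means $\E\int g\,dM_{n}\to\int g\,d\mu$. Uniform integrability of $\{M_{n}[\mathcal{T}]\}$ then converts convergence in probability into convergence in $L^{1}(\OMEGA,\P)$, and uniqueness of the limit promotes subsequential convergence to convergence of the whole sequence. Finally, the passage from bounded $g$ to arbitrary $f\in L^{1}(\mu)$ follows by approximating $f$ in $L^{1}(\mu)$ and controlling the tails through uniform integrability of $\{M_{n}[\mathcal{T}]\}$ together with $\E M_{n}=\mu$.
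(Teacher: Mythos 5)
Your reduction to the Peyrière-measure picture is sound as far as it goes, and it parallels the paper's own machinery: the cocycle characterization of $\Theta_{N}$ is exactly the content of Theorem \ref{thm:GMCShifts} (and the uniqueness Corollary \ref{cor:Uniqueness}), and passing the cocycle to the limit along the approximation is close in spirit to Lemma \ref{lem:CoupledGMC}. The existence half of the conclusion could plausibly be completed along your lines, since disintegrating the limiting $\Theta$ over $\sigma\left(X\right)$ automatically produces an $X$-measurable candidate $M$. The problem is that the measure $\Theta_{n}$ records only \emph{linear} (first-moment) information about $M_{n}$, namely the quantities $\E\left[\varphi\left(X\right)\intop g\left(t\right)M_{n}\left(dt\right)\right]$, so convergence $\Theta_{n}\to\Theta$ is exactly weak $L^{1}\left(\P\right)$ convergence of $\intop g\,dM_{n}$ and nothing more.

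The genuine gap is your final step. You propose to get convergence in probability ``by extracting an almost-everywhere convergent subsequence of these $t$-averaged densities and applying a Scheff\'e-type argument,'' but weak $L^{1}$ convergence does not admit almost-everywhere (or in-probability) convergent subsequences: the standard oscillating example $f_{n}\left(\omega\right)=1+\sin\left(n\omega\right)$ on the circle is nonnegative, uniformly bounded (hence uniformly integrable), has constant mean, and converges weakly to $1$, yet no subsequence converges in probability. Scheff\'e's theorem needs convergence in probability as an \emph{input}, which is precisely what is to be proven, so the argument is circular; and since your entire framework only sees functionals that are linear in $M_{n}$, no argument based on $\Theta_{n}$ alone can rule out such oscillations. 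The paper closes exactly this gap with genuinely nonlinear information: it extracts a limit of the \emph{joint laws} $\left(X,M_{Y_{n}}\right)\overset{\Law}{\to}\left(X,M\right)$ (Lemma \ref{lem:Compactness}), where $M$ need not be $X$-measurable, and then proves Claim \ref{cl:Gtrick},
\[
\E M\left[\mathcal{T}\right]G\left(M\left[\mathcal{T}\right]\right)=\E\intop\mu\left(dt\right)G\left(\intop e^{\left\langle Y\left(t\right),Y\left(s\right)\right\rangle }M\left(ds\right)\right)\le\E\,\Ec{M\left[\mathcal{T}\right]}XG\left(M\left[\mathcal{T}\right]\right),
\]
whose proof requires the ``$\exp K$ lemma'' (Lemma \ref{lem:ExpK}), i.e.\ the second-moment identity $\E\left[M_{Z}\otimes M_{W}\right]=\exp K_{ZW}\cdot\mu\otimes\mu$ special to GMCs. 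Taking $G\left(x\right)=\frac{x}{1+x}$ and exploiting strict concavity of $G$ and strict convexity of $x\mapsto xG\left(x\right)$ in Jensen's inequality forces $M\left[\mathcal{T}\right]=\Ec{M\left[\mathcal{T}\right]}X$, i.e.\ $\sigma\left(X\right)$-measurability of $M$; only then does the soft Lemma \ref{lem:MeasurabilityImpliesConvergence} convert convergence in law into convergence in probability, and uniform integrability into $L^{1}$ convergence. The quantity $\E M\left[\mathcal{T}\right]G\left(M\left[\mathcal{T}\right]\right)$ is not a functional of $\Theta$, so this step has no counterpart in your proposal; supplying it (or some substitute that controls nonlinear functionals of $M_{n}$) is what is missing.
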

Later we restate and prove the approximation theorem in different
notation as Theorem \ref{thm:Approximation}. The assumptions there
are only marginally more general --- most notably, the approximating
fields are only assumed to be jointly Gaussian, not necessarily measurable
with respect to the limiting field. We also show in Theorem \ref{thm:HilbertSchmidtMoments}
and its Corollary \ref{cor:Moments} that the assumption that the
covariances $K_{n}$ and $K$ are Hilbert-Schmidt, at least with respect
to some equivalent measure $\mu^{\prime}\sim\mu$, follows from the
existence of a subcritical GMC over $X_{n}$, and therefore does not
need to be included as a separate clause.

In spite of the generality of Theorem \ref{thm:Approximation}, even
for logarithmic fields \eqref{eq:LogKernel} and convolution approximations
$A_{n}\xi:=\xi\ast\psi_{1/n}$ as in \eqref{eq:Convolution} our result
is stronger than the approximation theorem of Robert and Vargas \cite{RVRevisited}
in that we assert convergence in probability rather than just in distribution
and identify the limit as a function of the Gaussian field. In this
special case the only condition of Theorem \ref{thm:Approximation}
that is nontrivial to check is the uniform integrability of $M_{n}\left[\mathcal{T}\right]$,
and we will see in Section \ref{sub:ApplicationLog} that it follows
from the results of \cite{KahaneSur}, namely the existence of a GMC
for some specific logarithmic field and Kahane's comparison inequality.

Other approaches to the problems of convergence and independence of
the mollifier for logarithmic GMCs were explored in the papers \cite{JSUniqueness,BerElementary},
both of which appeared after our initial preprint. Unlike our result,
\cite{JSUniqueness} also covers the critical case.

\subsection{Randomized shifts}

A central idea that we employ throughout the paper is to study the
random measure $M$ by considering the measure 
\[
\Q\left(d\omega,dt\right):=\P\left(d\omega\right)M\left(X\left(\omega\right),dt\right)
\]
on $\mathcal{\OMEGA}\times\mathcal{T}$, where $\left(\OMEGA,\mathcal{F},\P\right)$
is the underlying probability space. Since by Cameron-Martin theorem,
multiplying the Gaussian measure by an exponential of a linear functional
amounts to shifting the measure, and the GMC is a generalization of
exponentials, the following fact should come as no surprise.
\begin{thm}
A random measure $M$ is a subcritical GMC over a field $X$ iff for
every positive function $f=f\left(X,t\right)$, measurable with respect
to the field $X$ and $t\in\mathcal{T}$, we have
\begin{equation}
\E\intop f\left(X,t\right)M\left(X,dt\right)=\E\intop f\left(X+K\left(t,\cdot\right),t\right)\mu\left(dt\right),\label{eq:Peyriere}
\end{equation}
where $K$ is the covariance of the field $X$.
\end{thm}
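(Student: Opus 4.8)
The plan is to read \eqref{eq:Peyriere} as the assertion that the measure $\Q(d\omega,dt)=\P(d\omega)M(X(\omega),dt)$ coincides, when tested against positive functionals, with the measure $\Q'$ given by $\int F\,d\Q':=\int_{\mathcal{T}}\mu(dt)\,\E F(X+K(t,\cdot),t)$. Once $\E M=\mu$ is known both are finite positive measures of total mass $\mu[\mathcal{T}]$, so it suffices to check equality on a measure-determining family, and I would use $F(X,t)=e^{\langle X,\xi\rangle}h(t)$ with $\xi\in H$ and $h\geq0$ bounded; the exponentials $e^{\langle X,\xi\rangle}$ of the measurable linear functionals separate the law of $X$. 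The engine throughout is the Cameron--Martin theorem in the form $\E\,\Phi(X+\xi)=\E[\Phi(X)\,e^{\langle X,\xi\rangle-\frac{1}{2}\norm{\xi}_{H}^{2}}]$ for $\xi\in H$, together with the reproducing identity $\langle K(t,\cdot),\xi\rangle_{H}=\xi(t)$; the latter says precisely that the shift by the Cameron--Martin vector $K(t,\cdot)$ has density $e^{X(t)-\frac{1}{2}K(t,t)}$, which is the formal GMC weight in \eqref{eq:FormalGMC}.

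For the direction ``GMC $\Rightarrow$ \eqref{eq:Peyriere}'' I would fix $h$ and set $\Phi(X):=\int h(t)\,M(X,dt)$, so that $\E\Phi=\int h\,d\mu$. The defining property \eqref{eq:DefGMC4Field} gives $\Phi(X+\xi)=\int h(t)\,e^{\xi(t)}M(X,dt)$, whence Cameron--Martin together with $\E M=\mu$ yields, for every real $\xi\in H$,
\[
\E\Big[e^{\langle X,\xi\rangle}\int h(t)\,M(X,dt)\Big]=e^{\frac{1}{2}\norm{\xi}_{H}^{2}}\int h(t)\,e^{\xi(t)}\mu(dt).
\]
A direct Gaussian computation gives the same value for $\Q'$, since $\int e^{\langle X,\xi\rangle}h(t)\,d\Q'=\int\mu(dt)\,h(t)\,\E\,e^{\langle X+K(t,\cdot),\xi\rangle}$ and $\E\,e^{\langle X+K(t,\cdot),\xi\rangle}=e^{\xi(t)+\frac{1}{2}\norm{\xi}_{H}^{2}}$ by the reproducing identity. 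Thus $\Q$ and $\Q'$ have identical (finite) Laplace transforms along all real directions $\xi\in H$, which determines the two finite measures and forces $\Q=\Q'$.

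For the converse, taking $f=h(t)$ independent of $X$ in \eqref{eq:Peyriere} gives $\E M=\mu$ at once. To recover \eqref{eq:DefGMC4Field} I would verify, for a measure-determining family of positive functionals $G$ of $X$, bounded $h\geq0$, and $\xi\in H$, the integrated identity
\[
\E\Big[G(X)\int h(t)\,M(X+\xi,dt)\Big]=\E\Big[G(X)\int h(t)\,e^{\xi(t)}M(X,dt)\Big],
\]
which then yields the almost sure equality $M(X+\xi,\cdot)=e^{\xi(\cdot)}M(X,\cdot)$. Writing the left side as $\E\,\Psi(X+\xi)$ with $\Psi(Y):=G(Y-\xi)\int h\,M(Y,dt)$ and applying Cameron--Martin turns it into $\E[G(X-\xi)\,e^{\langle X,\xi\rangle-\frac{1}{2}\norm{\xi}_{H}^{2}}\int h\,M(X,dt)]$; feeding both sides into \eqref{eq:Peyriere} reduces the claim to the per-$t$ identity $\E[G(X+K(t,\cdot)-\xi)D_{\xi}(X+K(t,\cdot))]=e^{\xi(t)}\E[G(X+K(t,\cdot))]$, where $D_{\xi}$ is the Cameron--Martin density. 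This is immediate from $D_{\xi}(X+K(t,\cdot))=e^{\xi(t)}D_{\xi}(X)$ (the reproducing identity once more) followed by one further Cameron--Martin shift.

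All of the conceptual content is carried by Cameron--Martin and the reproducing identity $\langle K(t,\cdot),\xi\rangle_{H}=\xi(t)$; the real difficulties are technical. The first is the legitimacy of the shifts $X+K(t,\cdot)$: the Cameron--Martin steps require $K(t,\cdot)\in H$ for $\mu$-almost every $t$ (equivalently $K(t,t)<\infty$ a.e.) and measurability of $t\mapsto K(t,\cdot)\in H$; I expect the existence of a subcritical GMC to force this, consistently with the Hilbert--Schmidt statements Theorem \ref{thm:HilbertSchmidtMoments} and Corollary \ref{cor:Moments}. The main obstacle, however, is the passage from the exponential/Laplace identities to the full measure identity: one must confirm that the chosen families of functionals separate the relevant finite measures and control the exponential moments, so that the determination of a finite measure by its Laplace transform (forward direction) and the monotone-class extension to all positive $f$ (both directions) are justified. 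Everything else is routine Gaussian bookkeeping.
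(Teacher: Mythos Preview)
Your forward direction is exactly the paper's argument: the chain of equalities \eqref{eq:BunchOfEqualities} computes the conditional Laplace transform $\E_{\Q}\varphi(t)e^{\langle X,\xi\rangle}$ via Cameron--Martin and the defining property of GMC, just as you do. Your converse is packaged differently but is equivalent: the paper defines $\Q'$, disintegrates it over $\omega$ to obtain $M'$, and checks the GMC property by computing the Radon--Nikodym density of the pushforward $(S_{\xi})_{\ast}\Q'$ against $\Q'$ in two ways (once fibrewise over $t$, once fibrewise over $\omega$), obtaining \eqref{eq:Density1} and \eqref{eq:Density2}. Your ``per-$t$ identity'' is precisely the comparison of these two densities, written out as an integral identity rather than as a ratio of measures; both approaches use Cameron--Martin only for the deterministic shift $\xi\in H$.

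There is, however, a genuine misconception in your technical remarks. You write that you ``expect the existence of a subcritical GMC to force'' $K(t,\cdot)\in H$, i.e.\ $K(t,t)<\infty$ a.e., and point to Theorem~\ref{thm:HilbertSchmidtMoments} and Corollary~\ref{cor:Moments}. This is false in the cases of interest: for logarithmic fields $K(t,t)=+\infty$ for every $t$, yet subcritical GMCs exist. The Hilbert--Schmidt results only say that $K\in L^{2}(\mu'\otimes\mu')$ off the diagonal, not that $K(t,\cdot)$ lies in the Cameron--Martin space. The correct interpretation of ``$X+K(t,\cdot)$'' is as the generalized random vector $X+Y(t)$, where $Y(t)$ is viewed as an element of $\RR^{\infty}\supset H$ (Definition~\ref{def:SVect}); the object $\Law_{\P\otimes\mu}[X+Y(t),t]$ is a well-defined probability measure even though $\Law[X+Y(t)]$ may be singular to $\Law X$ for each individual $t$. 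Once you trace through your own argument you will see that the only Cameron--Martin shifts you ever perform are by $\xi\in H$, never by $K(t,\cdot)$; the identity $D_{\xi}(X+K(t,\cdot))=e^{\xi(t)}D_{\xi}(X)$ is just the tautology $\langle Y(t),\xi\rangle=\xi(t)$ and needs no absolute continuity. So your argument is correct, but your diagnosis of where the difficulty lies is not.
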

The left-hand side of \eqref{eq:Peyriere} can be written as $\intop f\left(X\left(\omega\right),t\right)\Q\left(d\omega,dt\right)$,
so its right-hand side characterizes the measure $\Q$ on the $\sigma$-algebra
$\sigma\left(X,t\right)$ in terms of the field $X$ and the expectation
$\mu=\E M$. On the other hand, the random measure $M$ can be recovered
from $\Q$ by disintegrating $\Q\left(d\omega,dt\right)$ with respect
to the variable $\omega$, and in fact only the restriction of $\Q$
to $\sigma\left(X,t\right)$ matters, since $M$ is measurable with
respect to $X$. This leads to an important corollary:
\begin{cor}
The subcritical GMC with a given expectation $\mu$ over a given Gaussian
field $X$ is unique whenever it exists.
\end{cor}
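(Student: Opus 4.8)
The plan is to read off uniqueness directly from the preceding characterization. Suppose $M_{1}$ and $M_{2}$ are two subcritical GMCs over the same field $X$ with the common expectation $\E M_{1}=\E M_{2}=\mu$. First I would form the associated measures $\Q_{i}(d\omega,dt):=\P(d\omega)\,M_{i}(X(\omega),dt)$ on $\OMEGA\times\mathcal{T}$ for $i=1,2$ and invoke \eqref{eq:Peyriere}. Since both $M_{i}$ satisfy the characterization with the same covariance $K$ and the same expectation $\mu$, for every positive $\sigma(X,t)$-measurable $f=f(X,t)$ the two left-hand sides $\E\int f(X,t)M_{i}(X,dt)$ equal the common right-hand side $\E\int f(X+K(t,\cdot),t)\mu(dt)$. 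Rewriting the left-hand side as $\int f(X(\omega),t)\,\Q_{i}(d\omega,dt)$, this says that $\int f\,d\Q_{1}=\int f\,d\Q_{2}$ for all positive $\sigma(X,t)$-measurable $f$, i.e.\ the restrictions $\Q_{1}|_{\sigma(X,t)}$ and $\Q_{2}|_{\sigma(X,t)}$ coincide as measures.

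The second step is to recover each $M_{i}$ from $\Q_{i}|_{\sigma(X,t)}$ by disintegration over the field. Here the essential point, emphasized in the discussion following the theorem, is that $M_{i}$ is a function of $X$ alone: the random measure $M_{i}(X,\cdot)$ depends on $\omega$ only through $X(\omega)$. Consequently the disintegration of $\Q_{i}$ with respect to the $\omega$-variable factors through $X$, so that $M_{i}(X,\cdot)$ is exactly the family of fiber measures obtained by disintegrating $\Q_{i}|_{\sigma(X,t)}$ along the projection $(\omega,t)\mapsto X(\omega)$; no information outside $\sigma(X,t)$ is needed to reconstruct $M_{i}$. Because $\Q_{1}|_{\sigma(X,t)}=\Q_{2}|_{\sigma(X,t)}$, the corresponding disintegrations agree $\P$-almost surely, whence $M_{1}(X,\cdot)=M_{2}(X,\cdot)$ almost surely. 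This is the asserted uniqueness of $M$ as a function of $X$, not merely in law.

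I expect the only real obstacle to be the rigorous implementation of the disintegration and its uniqueness. This requires the standard machinery of regular conditional distributions, hence a mild regularity hypothesis such as $\mathcal{T}$ (and the relevant value space of $X$) being standard Borel, which is available in the paper's setting. Granting that, the uniqueness of the disintegration of a $\sigma$-finite measure along a measurable map is routine, and the measurability of $M$ with respect to $X$ guarantees that the restriction to $\sigma(X,t)$ carries all the relevant information; the argument then closes as above.
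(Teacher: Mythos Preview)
Your proposal is correct and follows essentially the same approach as the paper: you identify the restriction of the Peyri\`ere measure $\Q$ to $\sigma(X,t)$ via \eqref{eq:Peyriere}, then recover $M$ from $\Q|_{\sigma(X,t)}$ by disintegration, using the $X$-measurability of $M$ to ensure nothing is lost in restricting to $\sigma(X,t)$. The paper's argument is exactly this, stated in the paragraph preceding the corollary and spelled out again in Corollary~\ref{cor:Uniqueness} with explicit disintegration formulas.
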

In particular, this means that all approximation-based constructions
of subcritical GMC yield the same limit.

There is another point of view on \eqref{eq:Peyriere} that is in
some ways more natural and more appropriate for our purposes, and
which we adopt for the rest of the text. Note that \eqref{eq:Peyriere}
implies that whenever $X$ and $t\in\mathcal{T}$ are sampled independently,
the latter according to $\mu$, the distribution of $X+K\left(t,\cdot\right)$
is absolutely continuous with respect to that of $X$, with density
equal to the total mass $M\left[\mathcal{T}\right]$. We express this
by calling $K\left(t,\cdot\right)$ a \emph{randomized shift} of $X$.
Intuitively, randomized shifts generalize deterministic Cameron-Martin
shifts in the same way as subcritical GMCs generalize exponentials
of Gaussian random variables. More precisely, there is a bijective
correspondence between the two. The relation between subcritical GMCs
and randomized shifts is certainly not a new idea --- for example,
a special case of it is mentioned in \cite{STTranslation} --- but
we have not seen it stated in its proper generality in the literature.
A clean statement of the bijection requires a notational twist, which
we explain next.

Suppose that instead of a Gaussian field we are given an abstract
Gaussian random vector $X$ ``in'' some abstract real separable
Hilbert space $H$, with no a priori relation to the space $\left(\mathcal{T},\mu\right)$.
Then the additional ``field'' structure that is needed in order
to make sense of the definition of \eqref{eq:DefGMC4Field} is a way
to map vectors $\xi\in H$ into ($\mu$-equivalence classes of) functions
on $\mathcal{T}$ --- in other words, a continuous linear operator
$Y:H\to L^{0}\left(\mathcal{T},\mu\right)$, where $L^{0}$ is the
space of $\mu$-equivalence classes of functions equipped with the
topology of convergence in measure. We write $Y$ applied to $\xi\in H$
as $\left\langle Y,\xi\right\rangle \in L^{0}\left(\mathcal{T},\mu\right)$,
and the value of $\left\langle Y,\xi\right\rangle $ at a point $t\in\mathcal{T}$
is written as $\left\langle Y\left(t\right),\xi\right\rangle $. The
operator $Y$ can be viewed as a ``generalized $H$-valued function
on $\mathcal{T}$'', with $\left\langle Y\left(t\right),\xi\right\rangle $
being the ``scalar product'' of the generalized value ``$Y\left(t\right)$''
with $\xi$. Note that $\left\langle Y\left(t\right),\xi\right\rangle $
is only defined for $\mu$-almost all $t$ for every fixed $\xi$,
not for all $\xi$ simultaneously, so ``$Y\left(t\right)$'' may
fail to be a true vector in $H$. In the same way the ``standard
Gaussian'' $X$ itself is not a true random vector in $H$ (unless
it is finite-dimensional), but rather defined as an operator $X:H\to L^{0}\left(\OMEGA,\P\right)$
that takes any $\xi\in H$ to a Gaussian random variable of variance
$\norm{\xi}^{2}$.

We define a generalized random vector in $H$, defined on a probability
space $\left(\OMEGA,\P\right)$ (or $\left(\mathcal{T},\mu\right)$)
as an operator $H\to L^{0}\left(\OMEGA,\P\right)$ (resp. $H\to L^{0}\left(\mathcal{T},\mu\right)$).
For the sake of concreteness, the ``value'' $X\left(\omega\right)$
(resp. $Y\left(t\right)$) of such a vector may be identified with
its sequence of coordinates with respect to a fixed orthonormal basis
$\left\{ e_{n}\right\} \subset H$, i.e. scalar products $\left\langle X\left(\omega\right),e_{n}\right\rangle $
and $\left\langle Y\left(t\right),e_{n}\right\rangle $.

From the point of view described above the objects $X$ and $Y$ are
treated on equal grounds as ``generalized random vectors''. The
defining property of GMC over such a pair $\left(X,Y\right)$ is rewritten
as
\[
M\left(X+\xi,dt\right)=e^{\left\langle Y\left(t\right),\xi\right\rangle }M\left(X,dt\right).
\]
Finally, the relation between GMCs and randomized shifts can be stated
as follows:
\begin{thm}
There exists a subcritical GMC $M$ over $\left(X,Y\right)$ iff $Y$
is a randomized shift, i.e. the distribution of $X+Y\left(t\right)$
when $t$ is sampled independently according to $\mu$ is absolutely
continuous with respect to that of $X$. If the subcritical GMC $M$
does exist then for every $\left(X,t\right)$-measurable function
$f$, we have
\[
\E\intop f\left(X,t\right)M\left(X,dt\right)=\E\intop f\left(X+Y\left(t\right),t\right)\mu\left(dt\right).
\]

\end{thm}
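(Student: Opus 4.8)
The plan is to derive everything from a single one-shift identity that follows immediately from the defining relation and the Cameron--Martin theorem, and then to promote it to the stated formula by a monotone-class argument; throughout I normalize $\E M=\mu$, as the subcriticality convention allows, and write $\Law X$ for the law of the standard Gaussian $X$ on $H$. Fix $\xi\in H$ and bounded $h\ge0$ on $\mathcal T$. Applying Cameron--Martin in the form $\E[G(X)e^{\langle X,\xi\rangle-\frac12\norm{\xi}^{2}}]=\E[G(X+\xi)]$ to $G(X)=\int h(t)M(X,dt)$, then the defining relation $M(X+\xi,dt)=e^{\langle Y(t),\xi\rangle}M(X,dt)$, and finally $\E M=\mu$ (the resulting integrand depends on $t$ alone), I obtain
\[
\E\int e^{\langle X,\xi\rangle-\frac12\norm{\xi}^{2}}h(t)\,M(X,dt)=\E\int e^{\langle Y(t),\xi\rangle}h(t)\,M(X,dt)=\int e^{\langle Y(t),\xi\rangle}h(t)\,\mu(dt).
\]
A short Gaussian computation identifies the right-hand side with $\E\int f(X+Y(t),t)\mu(dt)$ for $f(X,t)=e^{\langle X,\xi\rangle-\frac12\norm{\xi}^{2}}h(t)$, so the desired shift formula already holds on this multiplicative class of test functions.

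To reach arbitrary $f$, note that the functions $e^{\langle X,\xi\rangle}h(t)$, with $\xi$ in a finite-dimensional $H_N=\span\{e_1,\dots,e_N\}$ and $h\ge0$, are closed under multiplication and determine the joint law of $(P_NX,t)$. Indeed, matching the Laplace functionals above against all $h$ fixes, for $\mu$-a.e.\ $t$, the conditional Laplace transform of $P_NX$ under $\Q(d\omega,dt)=\P(d\omega)M(X,dt)$; since the competing conditional law is a finite-dimensional Gaussian shifted by $P_NY(t)$, whose moment generating function is everywhere finite and hence determining, the two coincide. Letting $N\to\infty$ and using $\sigma(X,t)=\bigvee_N\sigma(P_NX,t)$ shows that $\Q$ on $\sigma(X,t)$ is the law of $(X+Y(t),t)$ with $t\sim\mu$; a monotone-class argument then extends the formula to every nonnegative $(X,t)$-measurable $f$. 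Choosing $f=f(X)$ identifies the mixture $\int\Law(X+Y(t))\,\mu(dt)$ with $\nu(\cdot)=\E[M[\mathcal T];X\in\cdot]$, which is absolutely continuous with respect to $\Law X$, so $Y$ is a randomized shift; this establishes the ``only if'' direction and the formula simultaneously.

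For the converse, assume $Y$ is a randomized shift, i.e.\ the first marginal of $\Q:=\Law(X+Y(t),t)$ (with $t\sim\mu$ independent of $X$) is absolutely continuous with respect to $\Law X$. Disintegrating $\Q$ over its first coordinate yields $\Q(dx,dt)=(\Law X)(dx)\,M(x,dt)$, and I take this kernel $M$, manifestly a function of $X$, as the candidate GMC; that $\E M=\mu$ is immediate because the second marginal of $\Q$ is $\mu$. Verifying the defining property is where Cameron--Martin reappears in reverse: it suffices to check $\int g(x)h(t)\,M(x+\xi,dt)(\Law X)(dx)=\int g(x)h(t)e^{\langle Y(t),\xi\rangle}M(x,dt)(\Law X)(dx)$ for all test $g,h$, and rewriting the left side by the substitution $x\mapsto x-\xi$ (which produces the density $e^{\langle x,\xi\rangle-\frac12\norm{\xi}^{2}}$), re-expressing both integrals through $\Q=\Law(X+Y(t),t)$, and applying Cameron--Martin once more to the $X$-average collapses both sides to $e^{\langle Y(t),\xi\rangle}\E[g(X+Y(t))]$ integrated against $h\,d\mu$.

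I expect the genuine difficulties to be measure-theoretic rather than algebraic. The one-shift identity and its reverse are short; the effort lies in justifying the disintegration and the measurability of $M$ as an honest random measure on $\sigma(X)$, in upgrading the $\xi$-dependent almost-everywhere Laplace identities to a single almost-sure statement (via a countable dense set of shifts and continuity in $\xi$), and in controlling integrability so that the interchanges of expectation and integration, and the passage from the exponential class to arbitrary $f$, are all legitimate.
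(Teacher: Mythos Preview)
Your proof is correct and follows essentially the same approach as the paper: for the forward direction you compute, via Cameron--Martin and the defining GMC relation, the conditional Laplace transform of $X$ given $t$ under the Peyri\`ere measure $\Q(d\omega,dt)=\P(d\omega)M(X,dt)$ and conclude $\Law_{\Q}[X,t]=\Law_{\P\otimes\mu}[X+Y(t),t]$; for the converse you define $\Q$ by this law, disintegrate to produce $M$, and verify the shift property. The only cosmetic difference is that the paper phrases the converse verification as computing the density $(S_{\xi})_{\ast}\Q/\Q$ in two ways (once via the $t$-disintegration, once via the $X$-disintegration), whereas you carry out an equivalent direct substitution $x\mapsto x-\xi$; the algebra is the same.
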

To summarize, we arrive at two points of view on a Gaussian field.
The conventional one is that a field is defined by formal integrals
against test functions, and the other one is that a field is a pair
$\left(X,Y\right)$, where $X$ is a standard Gaussian in some Hilbert
space $H$ and $Y$ is some generalized random vector in $H$ indexed
by $\left(\mathcal{T},\mu\right)$. Intuitively, the relation between
them is that the ``value'' of the Gaussian field at a point $t\in\mathcal{T}$
should be ``$\left\langle X,Y\left(t\right)\right\rangle $''. That
these two points of view are equivalent follows from a nontrivial
result of functional analysis --- a factorization theorem due to Maurey
and Nikishin \cite{Nikishin,Maurey1,Maurey2}, as stated in the Appendix.

The ``$\left(X,Y\right)$'' notation for Gaussian fields clarifies
not only the ``GMC $\leftrightarrow$ randomized shift'' relation
but also the approximation theorem. The sequence of jointly Gaussian
fields corresponds to a sequence of couples $\left(X,Y_{n}\right)$
with the same $X$ representing the underlying Gaussian randomness,
and the convergence condition is $Y_{n}\to Y$ in the strong operator
topology, i.e. $\left\langle Y_{n},\xi\right\rangle \overset{L^{0}}{\to}\left\langle Y,\xi\right\rangle $
for each $\xi\in H$. In other words, it is really the randomized
shift $Y$ that is being approximated rather than the integrals of
the Gaussian fields against any particular test function. In the notation
of Theorem \ref{thm:ApproximationA} this corresponds to requiring
that $A_{n}\overset{s}{\to}A$ rather than $A_{n}^{\ast}\overset{s}{\to}A^{\ast}$
(where $\overset{s}{\rightarrow}$ denotes convergence in the strong
operator topology).

\subsection{Kernel regularity}

It is important in the formulation of our approximation theorem that
the covariance kernels of the fields are representable by functions
up to $\mu\otimes\mu$-equivalence. In GMC theory it is customary
to assume this from the beginning, and we are not aware of any known
results that justify this assumption.

Our Theorem \ref{thm:HilbertSchmidtMoments} and its Corollary \ref{cor:Moments}
provide such a result in the case of subcritical GMC. The statement
that we prove there is that the existence of a subcritical GMC implies
that the covariance kernel of the field is indeed a function (up to
$\mu\otimes\mu$-equivalence), and moreover, this function has polynomial
moments with respect to some equivalent measure $\mu^{\prime}\otimes\mu^{\prime}$:
\begin{equation}
K\in\bigcap_{p}L^{p}\left(\mu^{\prime}\otimes\mu^{\prime}\right).\label{eq:PolyMoment}
\end{equation}

Note that the existence of the \emph{$1$-exponential} moment
\begin{equation}
\intop e^{K\left(t,s\right)}\mu\left(dt\right)\mu\left(ds\right)<\infty\label{eq:ExpMoment}
\end{equation}
is already sufficient for the existence of a subcritical GMC. Indeed,
formally, the $1$-exponential moment of $K$ equals $\E\left[\left(M\left[\mathcal{T}\right]\right)^{2}\right]$,
and \eqref{eq:ExpMoment} ensures that Kahane's martingale approximations
are bounded in $L^{2}$, and therefore uniformly integrable.

In relation to this it is appropriate to mention Kahane's $\frac{1}{2}$-exponential
moment conjecture that states that a subcritical GMC exists iff for
some $\mu^{\prime}\sim\mu$ we have
\begin{equation}
\intop e^{\frac{1}{2}K\left(t,s\right)}\mu^{\prime}\left(dt\right)\mu^{\prime}\left(ds\right)<\infty.\label{eq:ExpMoment2}
\end{equation}
This conjecture turned out to be false --- namely, as Sato and Tamashiro
demonstrated in \cite{STTranslation}, for any $\varepsilon>0$ even
\[
\intop e^{\left(1-\varepsilon\right)K\left(t,s\right)}\mu\left(dt\right)\mu\left(ds\right)<\infty
\]
is not sufficient for the existence of a GMC. To find a correct replacement
for Kahane's condition appears to be an open problem.

\subsection{Organization of the paper}
\begin{itemize}
\item In Section \ref{sec:AbstractNonsense} we introduce relevant notion
of ``generalized Gaussian fields'' and the definition of GMC.
\item In Section \ref{sec:MainResults} we formulate the main results of
the paper --- the GMC $\leftrightarrow$ randomized shift bijection
(Theorem \ref{thm:GMCShifts}) in Section \ref{sub:RandomizedShifts},
the kernel regularity theorem (Theorem \ref{thm:HilbertSchmidtMoments}
and Corollary \ref{cor:Moments}) in Section \ref{sub:KernelRegularity}
and the approximation theorem (Theorem \ref{thm:Approximation}) in
Section \ref{sub:Approximation}. In the remaining Section \ref{sub:ApplicationLog}
we apply the approximation theorem to the convolution approximations
of logarithmic fields (Theorem \ref{thm:ApproximationLog}).
\item In Section \ref{sec:GMCShifts} we prove the GMC $\leftrightarrow$
randomized shift bijection (Theorem \ref{thm:GMCShifts}).
\item In Section \ref{sec:KernelRegularity} we prove the kernel regularity
theorem (Theorem \ref{thm:HilbertSchmidtMoments} and Corollary \ref{cor:Moments}).
The reader only interested in the approximation theorem can safely
skip this, as long as (s)he is willing to assume that both the approximating
fields and the limiting field have Hilbert-Schmidt covariances.
\item In Section \ref{sec:Approximation} we prove our main approximation
theorem (Theorem \ref{thm:Approximation}).
\item In the Appendix we state the Maurey-Nikishin factorization theorem
and explain the relation between the ``$\left(X,Y\right)$'' and
the test function point of view on Gaussian fields.
\end{itemize}

\subsection{Notation and standard assumptions}

We always denote by $H$ a separable infinite-dimensional real Hilbert
space; vectors in $H$ are denoted by $\xi,\eta,\dots$, generalized
random vectors (Definition \ref{def:SVect}) --- by uppercase $X,Y,Z,\dots$.
Among the latter, $X$ is reserved for a standard Gaussian in $H$
(Example \ref{ex:StdGaussian}), defined on a standard probability
space $\left(\OMEGA,\mathcal{F},\P\right)$ (i.e. one isomorphic to
a Polish space equipped with a Borel probability measure). $Y,Z,\dots$
are defined on a standard measurable space $\mathcal{T}$ equipped
with a finite or $\sigma$-finite positive measure $\mu$. This $\mathcal{T}$
serves as the parameter space for generalized Gaussian fields.

Assuming $Y$ is defined on $\mathcal{T}$, a Gaussian multiplicative
chaos over the field $\left(X,Y\right)$ (Definition \ref{def:GMC})
is denoted by $M\left(X,dt\right)$ or, in cases of ambiguity, $M_{Y}\left(X,dt\right)$.

The notation $\Law$ is used for the distribution of a generalized
random vector, i.e. the joint distribution of linear functionals of
it. Modifiers like $\Law_{\mu},\Law_{\Q}$ are used when the underlying
probability space is equipped with probability measures $\mu,\Q$.
Similarly, $\E_{\mu},\E_{\Q}$ are used for the expectation with respect
to $\mu,\Q$.

\section{The setup \label{sec:AbstractNonsense}}

\subsection{Generalized Gaussian fields}

For any standard probability space $\left(\mathcal{T},\mu\right)$
we denote by $L^{0}\left(\mathcal{T},\mu\right)$ the space of $\mu$-equivalence
classes of functions on $\mathcal{T}$, equipped with the topology
of convergence in measure.

Throughout the text we fix a real separable Hilbert space $H$.
\begin{defn}
A \emph{generalized $H$-valued function} defined on $\left(\mathcal{T},\mu\right)$
is a continuous linear operator $Y:H\to L^{0}\left(\mathcal{T},\mu\right)$.
Generalized $H$-valued functions defined on the distinguished probability
space $\left(\OMEGA,\P\right)$ are also called \emph{generalized
random vectors} (in $H$). \label{def:SVect}\end{defn}
\begin{rem}
The notion of generalized $H$-valued function only depends on the
equivalence class of measures. Indeed, if $\mu^{\prime},\mu^{\prime\prime}$
are two equivalent probability measures then the topologies of convergence
in measure are the same for $\mu^{\prime}$ and $\mu^{\prime\prime}$,
so $L^{0}\left(\mu^{\prime}\right)$ and $L^{0}\left(\mu^{\prime\prime}\right)$
can be identified in a canonical way. This also allows us to define
$L^{0}$ over a (nonzero) $\sigma$-finite measure as $L^{0}$ over
any equivalent probability measure. For the sake of having the right
definitions in the trivial case, for $\mu=0$ we set $L^{0}\left(\mu\right):=\left\{ 0\right\} $.
\label{rem:L0}
\end{rem}
We use the ``scalar product'' notation $\left\langle Y,\xi\right\rangle $
for a generalized $H$-valued function $Y$ and a vector $\xi\in H$
to denote the corresponding (equivalence class of) function. By an
abuse of notation we will write the value $\left\langle Y,\xi\right\rangle \left(t\right)$
as $\left\langle Y\left(t\right),\xi\right\rangle $.

For the sake of concreteness we may treat any generalized $H$-valued
function $Y$ as a ``true'' equivalence class of a function with
values in $\RR^{\infty}$ by taking its coordinates in a fixed orthonormal
basis $\left(e_{n}\right)$ of $H$. Namely, the ``value $Y\left(t\right)$''
is identified with its sequence of coordinates $Y^{\left(n\right)}\left(t\right):=\left\langle Y\left(t\right),e_{n}\right\rangle $,
which is well-defined up to equality almost everywhere. Not all $\RR^{\infty}$-valued
functions can serve as generalized $H$-valued functions. The necessary
and sufficient condition for a sequence $Y^{\left(n\right)}$ to come
this way from a generalized $H$-valued function is that for all $\xi\in\ell^{2}$
the series $\sum_{n}\xi_{n}Y^{\left(n\right)}$ should converge in
$L^{0}$, so that it is possible to define $\left\langle Y,\xi\right\rangle $
almost everywhere as a measurable linear functional.

The choice of $\RR^{\infty}\supset H$ as the space to host the values
of all generalized $H$-valued functions is highly arbitrary and actually
irrelevant for our purposes. It is, however, important for $Y\left(t\right)$
to have \emph{some} Frechet (thus: standard Borel) space to live in.

We identify true $H$-valued functions with a special case of generalized
$H$-valued functions. Accordingly, ``$Y\left(t\right)\in H$ for
almost all $t$'' means that $Y$ is a true $H$-valued function.
\begin{example}
We call a generalized random vector $X$ \emph{standard Gaussian}
in $H$ if all $\left\langle X,\xi\right\rangle ,\xi\in H$ are centered
Gaussian random variables with variance $\E\left\langle X,\xi\right\rangle ^{2}=\norm{\xi}^{2}$.
\label{ex:StdGaussian}
\end{example}
We say that a generalized $H$-valued function $Y$ on a probability
$\left(\mathcal{T},\mu\right)$ has a weak first moment (with respect
to the measure $\mu$) if for any $\xi\in H$ the function $\left\langle Y,\xi\right\rangle $
is in $L^{1}\left(\mu\right)$. In this case, by a standard application
of the closed graph theorem, there exists a vector $\intop Y\left(t\right)\mu\left(dt\right)\in H$
defined in the obvious way: 
\[
\left\langle \intop Y\left(t\right)\mu\left(dt\right),\xi\right\rangle :=\intop\left\langle Y\left(t\right),\xi\right\rangle \mu\left(dt\right).
\]
We may replace the symbol $\intop\dots\mu\left(dt\right)$ by $\E_{\mu}$,
and $\intop\dots\P\left(d\omega\right)$ by $\E$.
\begin{defn}
A \emph{generalized Gaussian field} on a standard measure space $\left(\mathcal{T},\mu\right)$
is a couple $\left(X,Y\right)$, where $X$ is the standard Gaussian
random vector in $H$ (defined on $\left(\OMEGA,\P\right)$) and $Y$
is a generalized $H$-valued function defined on $\left(\mathcal{T},\mu\right)$.
\label{def:GaussianField}
\end{defn}
As mentioned in the introduction, this point of view on generalized
Gaussian fields is equivalent to the more conventional one in terms
of integration against $L^{2}$ test functions. The equivalence between
the two is nontrivial and involves a factorization theorem due to
Maurey and Nikishin (Theorem \ref{thm:Factorization}). The translation
in both directions is explained in the Appendix.

\subsection{The definition of a GMC}

By a random measure on a measure space $\left(\mathcal{T},\mu\right)$
we always mean a random \emph{positive finite} measure $M$, such
that $\E M$ is $\mu$-absolutely continuous (notation: $\E M\ll\mu$).
The measure $\E M$ is defined by $\left(\E M\right)\left[A\right]:=\E\left(M\left[A\right]\right)$
for all measurable subsets $A\subset\mathcal{T}$. Note that $\E M$
need not be a $\sigma$-finite measure; however, it is equivalent
to a finite one --- namely, $\E\left[\left(M\left[\mathcal{T}\right]\vee1\right)^{-1}M\right]$
--- in the sense that their classes of null sets coincide. Similarly,
the condition $\E M\ll\mu$ is understood in the sense that for any
measurable $A\subset\mathcal{T}$, such that $\mu\left[A\right]=0$,
we have $\E M\left[A\right]=0$, or equivalently, $M\left[A\right]=0$
a.s.

Note that even though $\E M$ is $\mu$-absolutely continuous, $M$
itself may be almost surely $\mu$-singular.

We refer to the vectors $\xi\in H$ as Cameron-Martin shifts (of the
Gaussian $X$). For the necessary background on them the reader is
referred to \cite[Theorem 14.1]{Janson}.
\begin{defn}
A random measure $M$ on $\left(\mathcal{T},\mu\right)$ is called
a \emph{Gaussian multiplicative chaos} (GMC) over the Gaussian field
$\left(X,Y\right)$ if
\begin{enumerate}
\item $\E M\ll\mu$
\item $M$ is measurable with respect to $X$ (which allows us to write
$M=M\left(X\right)$);
\item For all vectors $\xi\in H$
\begin{equation}
M\left(X+\xi,dt\right)=e^{\left\langle Y\left(t\right),\xi\right\rangle }M\left(X,dt\right)\text{ a.s.}\label{eq:DefGMC}
\end{equation}

\end{enumerate}
The GMC is called \emph{subcritical} if $\E M$ is $\sigma$-finite.
\label{def:GMC}
\end{defn}
Instead of ``GMC over the Gaussian field $\left(X,Y\right)$'' we
may also say ``GMC associated to $Y$'' with the Gaussian $X$ understood
implicitly.

Formula \eqref{eq:DefGMC} requires a couple of comments. First, $X+\xi$
is a shifted Gaussian, so by the Cameron-Martin theorem, its distribution
is equivalent to that of $X$, which makes $M\left(X+\xi\right)$
well-defined. Second, even though $\left\langle Y,\xi\right\rangle $
is only defined almost everywhere with respect to $\mu$, and in the
interesting cases $M$ is almost surely $\mu$-singular, $e^{\left\langle Y,\xi\right\rangle }M$
is still well-defined, precisely because $\E M$ is $\mu$-absolutely
continuous. Indeed, if $\varphi,\tilde{\varphi}$ are two measurable
functions on $\mathcal{T}$ that are equal $\mu$-almost everywhere
to $e^{\left\langle Y,\xi\right\rangle }$ then for $\P$-almost all
$\omega$ and $M\left(X\left(\omega\right),dt\right)$-almost all
$t$ we have $\varphi\left(t\right)=\tilde{\varphi}\left(t\right)$,
thus almost surely $\varphi M=\tilde{\varphi}M$.
\begin{example}
If $Y\left(t\right)\in H$ for almost all $t$ then
\begin{equation}
M\left(X,dt\right):=\exp\left[\left\langle X,Y\left(t\right)\right\rangle -\frac{1}{2}\norm{Y\left(t\right)}^{2}\right]\mu\left(dt\right)\label{eq:TrivialGMC}
\end{equation}
is a subcritical GMC over the Gaussian field $\left(X,Y\right)$ with
expectation $\E M=\mu$. \label{ex:TrivialGMC}\end{example}
\begin{rem}
Obviously, in the definition of GMC only the equivalence class of
$\mu$ matters. However, in the subcritical case we will sometimes
assume that $\E M=\mu$, which is no loss of generality because a
GMC on $\left(\mathcal{T},\mu\right)$ is also a GMC on $\left(\mathcal{T},\E M\right)$.
Furthermore, another common assumption will be that $\E M=\mu$ is
finite, which is also no loss of generality because our theory is
essentially local, i.e. the statements for the whole space $\left(\mathcal{T},\mu\right)$
reduce to those for its subsets of finite measure. \label{rem:EM}
\end{rem}
Finally we would like to remark there exist GMCs that are not subcritical.
So far the only examples known to the author are the critical GMCs
over logarithmic fields and their hierarchical counterparts \cite{HuShiMinimal,DSRVCriticalConvergence,DSRVRenormalization}.
For these critical GMCs we have for any measurable set $A\subset\mathcal{T}$
\[
\E M\left[A\right]=\begin{cases}
0, & \mu\left[A\right]=0\\
\infty, & \mu\left[A\right]>0
\end{cases}
\]
so that it is impossible to normalize them by expectation. In these
cases $\E M$ is a non-$\sigma$-finite measure that we agree to call
$\mu$-absolutely continuous; its density with respect to $\mu$ is
almost everywhere infinite.

\section{Main results \label{sec:MainResults}}

\subsection{Randomized shifts \label{sub:RandomizedShifts}}
\begin{defn}
A generalized $H$-valued function $Y$ defined on $\left(\mathcal{T},\mu\right)$
is called a \emph{randomized shift} if
\[
\Law_{\P\otimes\mu}\left[X+Y\right]\ll\Law_{\P}X.
\]

\end{defn}
Note that being a randomized shift only depends on the equivalence
class of $\Law_{\mu}Y$.
\begin{example}
[Trivial shifts] By the Cameron-Martin theorem every $H$-valued
function $Y:\mathcal{T}\to H$ (not ``generalized''!) is a randomized
shift.
\end{example}
These Cameron-Martin shifts are viewed as ``trivial''. There are
less trivial ones:
\begin{example}
[Gaussian shifts] By the Hajek-Feldman theorem \cite[Theorem 6.3.2]{Bog},
a Gaussian generalized random vector in $H$ (i.e. that for which
all linear functionals are Gaussian) is a randomized shift iff its
covariance is Hilbert-Schmidt. Note also that being ``trivial''
in the above sense is equivalent to the covariance being trace class.
\end{example}
In Theorem \ref{thm:GMCShifts} we describe the relation between subcritical
GMC over Gaussian fields with parameter space $\left(\mathcal{T},\mu\right)$
and randomized shifts defined on $\left(\mathcal{T},\mu\right)$.
Here it is convenient to view $\left(\mathcal{T},\mu\right)$ as an
additional source of randomness, so functions on $\OMEGA\times\mathcal{T}$
are treated as random variables --- in particular, the projection
map $\OMEGA\times\mathcal{T}\to\mathcal{T}$ is treated as ``the''
random point $t$ in $\mathcal{T}$. Accordingly, we use the notation
$\Law_{\mu},\Law_{\P\otimes\mu},\dots$ for the law of a generalized
random vector defined on the probability space $\left(\mathcal{T},\mu\right),\left(\OMEGA\times\mathcal{T},\P\otimes\mu\right),\dots$.
\begin{thm}
There exists a subcritical GMC $M$ over the Gaussian field $\left(X,Y\right)$
with expectation $\mu$ iff $Y$ is a randomized shift, in which case
under the Peyrière measure on $\OMEGA\times\mathcal{T}$
\begin{equation}
\Q\left(d\omega,dt\right):=\P\left(d\omega\right)M\left(X\left(\omega\right),dt\right)\label{eq:DefQ}
\end{equation}
we have
\begin{equation}
\Law_{\Q}\left[X,t\right]=\Law_{\P\otimes\mu}\left[X+Y\left(t\right),t\right].\label{eq:QShift}
\end{equation}
\label{thm:GMCShifts}
\end{thm}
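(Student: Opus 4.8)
The plan is to study the pushforward of the Peyrière measure \eqref{eq:DefQ} under $\left(\omega,t\right)\mapsto\left(X\left(\omega\right),t\right)$. Writing $\gamma:=\Law_{\P}X$ for the law of the standard Gaussian on the coordinate space $\RR^{\infty}\supset H$ and using that $M$ is $X$-measurable, this pushforward is
\[
\nu\left(dx,dt\right):=\Law_{\Q}\left[X,t\right]\left(dx,dt\right)=\gamma\left(dx\right)M\left(x,dt\right),
\]
a measure on $\RR^{\infty}\times\mathcal{T}$ (finite after reducing to $\mu\left[\mathcal{T}\right]<\infty$ via Remark \ref{rem:EM}) whose $t$-marginal is $\E M$. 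The whole theorem then amounts to the assertion that $\nu=\Law_{\P\otimes\mu}\left[X+Y\left(t\right),t\right]$, together with the observation that the latter having $X$-marginal $\ll\gamma$ is exactly the randomized-shift condition. The pivot for both implications is a single shift-covariance identity for $\nu$: writing $S_{\xi}\left(x,t\right):=\left(x+\xi,t\right)$,
\[
\left(S_{\xi}\right)_{\ast}\nu=\exp\left[\left\langle x,\xi\right\rangle -\left\langle Y\left(t\right),\xi\right\rangle -\tfrac{1}{2}\norm{\xi}^{2}\right]\nu\qquad\left(\xi\in H\right).
\]

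For the forward implication I would first derive this identity purely as an equality of measures, with no integrability hypothesis. Starting from $\int F\left(x+\xi,t\right)\gamma\left(dx\right)M\left(x,dt\right)$, I substitute the defining relation \eqref{eq:DefGMC} in the form $M\left(x,dt\right)=e^{-\left\langle Y\left(t\right),\xi\right\rangle }M\left(x+\xi,dt\right)$, change variables $x\mapsto x-\xi$, and apply the Cameron-Martin formula $\left(S_{\xi}\right)_{\ast}\gamma=e^{\left\langle x,\xi\right\rangle -\frac{1}{2}\norm{\xi}^{2}}\gamma$; collecting terms yields the displayed identity. I then project onto a finite orthonormal family $\xi_{1},\dots,\xi_{k}$: since the Radon-Nikodym factor depends on $x$ only through $\vec{w}=\left(\left\langle x,\xi_{j}\right\rangle \right)_{j}$, the identity descends verbatim to the image measure $\nu_{\vec{\xi}}$ on $\RR^{k}\times\mathcal{T}$. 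Disintegrating $\nu_{\vec{\xi}}$ over its $t$-marginal, the conditional law $\nu_{\vec{\xi}}^{t}$ on $\RR^{k}$ is quasi-invariant under all translations with a log-quadratic cocycle; it is therefore equivalent to Lebesgue measure, and its density $p$ solves $p\left(\vec{w}-\vec{s}\right)=e^{\vec{s}\cdot\vec{w}-\vec{s}\cdot\vec{a}\left(t\right)-\frac{1}{2}\norm{\vec{s}}^{2}}p\left(\vec{w}\right)$ with $\vec{a}\left(t\right)=\left(\left\langle Y\left(t\right),\xi_{j}\right\rangle \right)_{j}$. The only probability density solving this is the Gaussian $N\left(\vec{a}\left(t\right),I_{k}\right)$, i.e. the conditional law of $\left(\left\langle X+Y\left(t\right),\xi_{j}\right\rangle \right)_{j}$. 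As finite-dimensional projections determine a law on $\RR^{\infty}\times\mathcal{T}$, this gives \eqref{eq:QShift}. Finally, $\E_{\Q}\left[\psi\left(X\right)\right]=\E\left[\psi\left(X\right)M\left[\mathcal{T}\right]\right]$ shows $\Law_{\Q}X\ll\gamma$, so by \eqref{eq:QShift} also $\Law_{\P\otimes\mu}\left[X+Y\right]\ll\gamma$, and $Y$ is a randomized shift.

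For the converse I would run this backwards. Given that $Y$ is a randomized shift, set $\nu:=\Law_{\P\otimes\mu}\left[X+Y\left(t\right),t\right]$; by hypothesis its $X$-marginal is $\ll\gamma$, so on the standard Borel product I may disintegrate $\nu\left(dx,dt\right)=\gamma\left(dx\right)M\left(x,dt\right)$, which defines an $X$-measurable random positive finite measure $M$. A direct computation shows that this $\nu$ itself satisfies the shift-covariance identity above; substituting $\nu=\gamma M$ into both sides and cancelling the common factor $e^{\left\langle x,\xi\right\rangle -\frac{1}{2}\norm{\xi}^{2}}\gamma\left(dx\right)$ yields $M\left(x+\xi,dt\right)=e^{\left\langle Y\left(t\right),\xi\right\rangle }M\left(x,dt\right)$ for $\gamma$-a.e. $x$, i.e. \eqref{eq:DefGMC}; the $t$-marginal of $\nu$ being $\mu$ gives $\E M=\mu$, so $M$ is a subcritical GMC and \eqref{eq:QShift} holds by construction.

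The main obstacle is the uniqueness step in the forward direction — that the shift-covariance identity forces each conditional law $\nu_{\vec{\xi}}^{t}$ to be the Gaussian translate rather than merely quasi-invariant with the right cocycle. Reducing this to the finite-dimensional functional equation for densities against Lebesgue measure is what makes it elementary and, in particular, sidesteps the exponential-moment difficulties a direct Laplace-transform argument would run into (the mixed transform $\int e^{\left\langle x,\xi\right\rangle }e^{i\left\langle x,\eta\right\rangle }d\nu$ need not be finite). The remaining work is bookkeeping: passing from ``for each $\xi$, a.e. $x$'' in \eqref{eq:DefGMC} to the measure identity holding for all $\xi$ simultaneously (via a countable dense set in $H$ and continuity of $\xi\mapsto\left\langle Y\left(t\right),\xi\right\rangle $ in $L^{0}$), the existence of the disintegrations (guaranteed since $\OMEGA$ and $\mathcal{T}$ are standard), and the reduction from $\sigma$-finite to finite $\mu$ through Remark \ref{rem:EM}.
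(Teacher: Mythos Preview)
Your proposal is correct, and your converse direction is essentially identical to the paper's: disintegrate $\Law_{\P\otimes\mu}\left[X+Y\left(t\right),t\right]$ against its $X$-marginal, then compute the Radon--Nikodym density of the shift action $S_{\xi}$ in two ways (once via the $t$-disintegration and Cameron--Martin, once via the $X$-disintegration) and compare.

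For the forward direction the paper takes a shorter route than yours. It computes $\E_{\Q}\left[\varphi\left(t\right)e^{\left\langle\xi,X\right\rangle}\right]$ directly, using Cameron--Martin and the GMC relation \eqref{eq:DefGMC}, and obtains $e^{\frac{1}{2}\norm{\xi}^{2}}\E_{\mu}\left[\varphi\left(t\right)e^{\left\langle\xi,Y\left(t\right)\right\rangle}\right]=\E_{\P\otimes\mu}\left[\varphi\left(t\right)e^{\left\langle\xi,X+Y\left(t\right)\right\rangle}\right]$. The arbitrary $\varphi$ turns this into equality of \emph{conditional} Laplace transforms of $X$ given $t$ under $\Q$ and under $\P\otimes\mu$, and the conditional transform is manifestly finite --- equal to $e^{\frac{1}{2}\norm{\xi}^{2}+\left\langle\xi,Y\left(t\right)\right\rangle}$ --- for $\mu$-a.e.\ $t$ and every $\xi$. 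So your concern about exponential-moment difficulties, while valid for the unconditional mixed transform, evaporates once one conditions on $t$; subcriticality enters precisely as $\Law_{\Q}t=\E M=\mu$, which makes the conditioning well-posed. Your finite-dimensional density argument reaches the same conclusion by a longer path: it is more purely measure-theoretic and perhaps makes the role of $\sigma$-finiteness more explicit (the fiber $\nu_{\vec{\xi}}^{t}$ must be a probability measure for the functional equation to force the Gaussian), at the cost of the extra projection to $\RR^{k}$ and the Lebesgue-equivalence step.
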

Note that \eqref{eq:QShift} characterizes uniquely the measure $\Q$
on the $\sigma$-algebra generated by $\left(X,t\right)$, so by disintegration
with respect to $X$ it also characterizes $M\left(X\right)$. Therefore,
we have the following important corollary: a subcritical GMC is unique.
\begin{cor}
A subcritical GMC with a given expectation $\mu$, associated to a
given $Y$, is unique, whenever it exists. $M$ can be recovered from
$\mu$ and $Y$ as follows:
\[
M\left[\mathcal{T}\right]=\Law_{\P\otimes\mu}\left[X+Y\left(t\right)\right]/\Law X,
\]
\[
\left(M\left[\mathcal{T}\right]\right)^{-1}M\left(x,dt\right)=\Lawcm{\P\otimes\mu}{t\in dt}{X+Y\left(t\right)=x},
\]
where $\dots/\Law X$ denotes the Radon-Nikodym density viewed as
a function of $X$, and $\Lawc{\cdot}{\cdot}$ denotes the conditional
distribution. \label{cor:Uniqueness}
\end{cor}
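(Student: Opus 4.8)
The plan is to extract everything from the identity \eqref{eq:QShift} furnished by Theorem \ref{thm:GMCShifts}, whose right-hand side $\Law_{\P\otimes\mu}[X+Y(t),t]$ is manifestly a function of the data $\mu$ and $Y$ alone, with no reference to $M$. First I would observe that, by the definition \eqref{eq:DefQ}, the Peyrière measure factors as $\Q=\P\otimes M$, and because $M$ is measurable with respect to $X$ (Definition \ref{def:GMC}), the random measure $M$ is completely recoverable from the restriction of $\Q$ to $\sigma(X,t)$ by disintegrating over $X$. Consequently, if $M$ and $M'$ are two subcritical GMCs sharing the same $\mu$ and $Y$, their Peyrière measures have identical restrictions to $\sigma(X,t)$ --- both equal to the right-hand side of \eqref{eq:QShift} --- and the disintegration forces $M=M'$, which is the asserted uniqueness.

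To read off the explicit recovery formulas I would push $\Q$ forward along $(\omega,t)\mapsto(X(\omega),t)$ into the standard Borel product $E\times\mathcal{T}$, where $E$ is the value space of $X$ and $\nu:=\Law X$. Using once more that $M=M(X)$, this pushforward is the measure $\nu(dx)\,M(x,dt)$, and I would disintegrate it over the first coordinate. Testing \eqref{eq:QShift} against functions of $X$ only identifies the $E$-marginal, $\nu(dx)\,M(x)[\mathcal{T}]=\Law_{\P\otimes\mu}[X+Y(t)]$, so that the total mass $M[\mathcal{T}]$, viewed as a function of $X$, is exactly the Radon--Nikodym density of $\Law_{\P\otimes\mu}[X+Y(t)]$ with respect to $\Law X$ (which exists precisely because $Y$ is a randomized shift). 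On the set where this density is positive, the conditional law of $t$ given $X=x$ under $\Q$ is the normalized measure $(M(x)[\mathcal{T}])^{-1}M(x,dt)$; matching it against the disintegration of the right-hand side of \eqref{eq:QShift} over its first marginal yields the second formula $(M[\mathcal{T}])^{-1}M(x,dt)=\Lawcm{\P\otimes\mu}{t\in dt}{X+Y(t)=x}$.

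The substantive content here is measure-theoretic bookkeeping rather than any new idea, and the one point I would emphasize is conceptual: it is the $X$-measurability of $M$ demanded in Definition \ref{def:GMC} that lets disintegration over the coarse $\sigma$-algebra $\sigma(X)$ recover $M$ on the nose, rather than merely its $X$-conditional average --- without that clause the restriction of $\Q$ to $\sigma(X,t)$ would only pin $M$ down up to $\E[M\mmidmid X]$. The rest is routine: the disintegration over $X$ is legitimate since all spaces in play are standard Borel; the normalization by $M[\mathcal{T}]$ is harmless because it only fails on $\{M[\mathcal{T}]=0\}$, where $M$ itself vanishes and the first formula already gives the answer; and the reduction to finite total mass, should one prefer genuine probability disintegrations, is supplied by the locality remark (Remark \ref{rem:EM}) together with the $\sigma$-finiteness of $\mu=\E M$ in the subcritical case.
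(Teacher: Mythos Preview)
Your proposal is correct and follows exactly the approach the paper indicates: the paper's entire argument is the one-sentence observation preceding the corollary, namely that \eqref{eq:QShift} pins down $\Q$ on $\sigma(X,t)$ in terms of $\mu$ and $Y$ alone, and $X$-measurability of $M$ then lets disintegration over $X$ recover $M$ itself. Your write-up simply spells out this disintegration explicitly to derive the two displayed formulas, which the paper leaves implicit.
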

Note that unlike in the previous approaches to GMC theory, we have
proven uniqueness of $M$ as a function of $X$ rather than just in
law.

It is instructive to note the role of the subcriticality of $M$ in
Theorem \ref{thm:GMCShifts}. For every GMC, not necessarily a subcritical
one, we can construct the measure $\Q$ as in \eqref{eq:DefQ}. In
general, $\Q$ is only $\sigma$-finite, and subcriticality is equivalent
to the finiteness of $\mu$-almost all fiber measures $\Q_{t}$ in
the disintegration
\[
\Q\left(d\omega,dt\right)=\Q_{t}\left(d\omega\right)\mu\left(dt\right).
\]
Indeed, the density of $\E M$ with respect to $\mu$ at $t\in\mathcal{T}$
is equal to the total mass of the fiber measure $\Q_{t}$. The proof
of Theorem \ref{thm:GMCShifts} proceeds essentially by verifying
that the fiber $\Q_{t}$ behaves under shifts by $\xi\in H$ like
the standard Gaussian measure shifted by $Y\left(t\right)$ in the
sense that it satisfies the corresponding Cameron-Martin formula.

Note that among \emph{finite} measures there is a unique one (up to
scaling) that satisfies the Cameron-Martin formula, namely the Gaussian
itself, so that the behavior of the GMC under shifts completely characterizes
$\Q$. However, the uniqueness argument fails without the subcriticality
assumption because there are many different $\sigma$-finite measures
that satisfy the Cameron-Martin formula. For example, for any positive
sequence $\left(C_{n}\right)$, such that $\sum_{n}e^{-C_{n}^{2}}<\infty$
there is a unique $\sigma$-finite measure $\gamma$ on $\RR^{\infty}$
that satisfies the Cameron-Martin formula for all shifts in $H:=\ell^{2}$
and whose restriction to $\prod_{n}\left[-C_{n},C_{n}\right]$ is
$\gamma^{\op{res}}:=\bigotimes_{n}\left(Z_{n}^{-1}\I\left\{ \left|x_{n}\right|\le C_{n}\right\} e^{-x_{n}^{2}/2}dx_{n}\right)$,
$Z_{n}:=\intop_{-C_{n}}^{C_{n}}e^{-x^{2}/2}dx$. One can construct
such a measure by gluing together the ``compatible'' measures $e^{-\left\langle \cdot,\xi\right\rangle -\frac{1}{2}\norm{\xi}^{2}}\left(S_{\xi}\right)_{\ast}\gamma^{\op{res}}$
(where $S_{\xi}$ is the shift $x\mapsto x+\xi$). The resulting measure
$\gamma$ is singular to the Gaussian iff $\sum_{n}C_{n}^{-1}e^{-C_{n}^{2}/2}=\infty$.
In fact, by choosing different sequences $\left(C_{n}\right)$ one
can produce a continuum of mutually singular $\sigma$-finite measures
that satisfy the Cameron-Martin formula. Due to this difficulty the
uniqueness problem for GMCs without the subcriticality assumption
remains open.

\subsection{Regularity of the kernel \label{sub:KernelRegularity}}

A priori for a Gaussian field $\left(X,Y\right)$ one can define the
covariance kernel, ``$K\left(t,s\right)=\left\langle Y\left(t\right),Y\left(s\right)\right\rangle $'',
as the formal kernel of the bilinear form on the $L^{2}\left(\mu^{\prime}\right)$
test functions (see the Appendix). Namely,
\[
\begin{aligned}\intop K\left(t,s\right)f\left(t\right)f\left(s\right)\mu^{\prime}\left(dt\right)\mu^{\prime}\left(ds\right) & :=\norm{\intop f\left(t\right)Y\left(t\right)\mu^{\prime}\left(dt\right)}^{2}\\
 & =\E\left\langle X,\intop f\left(t\right)Y\left(t\right)\mu^{\prime}\left(dt\right)\right\rangle ^{2}.
\end{aligned}
\]
Not all bounded operators in $L^{2}$ are integral operators, so neither
are such $K$'s represented by ``true'' functions on $\mathcal{T}\times\mathcal{T}$.
However, we will prove that in the GMC theory this pathology does
not happen, i.e. whenever a Gaussian field admits a subcritical GMC,
the covariance kernel $K$ is actually representable by a function
on $\mathcal{T}\times\mathcal{T}$. Furthermore, this function has
all moments with respect to some equivalent measure $\mu^{\prime}\otimes\mu^{\prime}$.
The proof proceeds by translating this property to an equivalent statement
about the randomized shift $Y$ and relies on the factorization theorem
(Theorem \ref{thm:Factorization}) for the construction of $\mu^{\prime}$.

For a Hilbert space $H$ we denote by $H^{\otimes n}$ its Hilbert
$n$-th tensor power, also called the space of Hilbert-Schmidt tensors.
By definition, it is the completion of the algebraic tensor power
$H_{\op{alg}}^{\otimes n}$ with respect to the scalar product
\[
\left\langle \xi_{1}\otimes\dots\otimes\xi_{n},\eta_{1}\otimes\dots\otimes\eta_{n}\right\rangle :=\left\langle \xi_{1},\eta_{1}\right\rangle \dots\left\langle \xi_{n},\eta_{n}\right\rangle ,
\]
extended from decomposable tensors to all tensors by multilinearity.
We denote by $\norm{\cdot}_{2}$ the Hilbert space norm corresponding
to this scalar product.

For a standard Gaussian $X$ in $H$ there is a well-known theory
of random variables that are polynomial in $X$, also known as the
Wick calculus (see \cite[Chapter III]{Janson}). Its basic construction
is the Wick product of jointly Gaussian random variables, denoted
by $\Wick{\left\langle X,\xi_{1}\right\rangle \dots\left\langle X,\xi_{n}\right\rangle }$,
and defined by the polarization of the identity
\[
\Wick{\left\langle X,\xi\right\rangle \dots\left\langle X,\xi\right\rangle }=\norm{\xi}^{n}h_{n}\left(\norm{\xi}^{-1}\left\langle X,\xi\right\rangle \right),
\]
where $h_{n}$ is the $n$-th Hermite polynomial
\[
h_{n}\left(x\right):=e^{-\frac{1}{2}\frac{\partial^{2}}{\partial x^{2}}}x^{n}=x^{n}-\frac{1}{2}n\left(n-1\right)x^{n-2}+\dots
\]
The basic fact is that $\left\langle \Wick{X^{\otimes n}},\lambda\right\rangle $,
defined initially for finite-rank tensors (i.e. elements of the algebraic
tensor product) $\lambda\in H_{\op{alg}}^{\otimes n}$ by 
\[
\left\langle \Wick{X^{\otimes n}},\xi_{1}\otimes\dots\otimes\xi_{n}\right\rangle :=\Wick{\left\langle X,\xi_{1}\right\rangle \dots\left\langle X,\xi_{n}\right\rangle },
\]
extends by $L^{2}$-continuity to all Hilbert-Schmidt tensors $\lambda\in H^{\otimes n}$.
In our language this is stated as follows:
\[
\Wick{X^{\otimes n}}\text{ is a generalized random vector in }H^{\otimes n}.
\]
It turns out that this implies a corresponding property for a randomized
shift $Y$ --- with the crucial difference that $Y$, unlike $X$,
does not need the Wick renormalization:
\[
Y^{\otimes n}\text{ is a generalized random vector in }H^{\otimes n}.
\]
This is the content of our Theorem \ref{thm:HilbertSchmidtMoments},
and it turns out to be equivalent to the existence of the $n$-th
moment of the covariance kernel $K$ with respect to $\mu^{\prime}\otimes\mu^{\prime}$
for some equivalent measure $\mu^{\prime}\sim\mu$.
\begin{thm}
Let $X$ be a standard Gaussian in $H$, and let $\left(\mathcal{T},\mu\right)$
be a standard probability space. Let $Y$ be a randomized shift, defined
on $\left(\mathcal{T},\mu\right)$. Then for every $n\in\NN$ there
is an equivalent measure $\mu_{n}^{\prime}\sim\mu$ on $\mathcal{T}$,
such that under $\mu_{n}^{\prime}$ all $\left\langle Y,\xi\right\rangle $
have finite absolute $n$-th moment, and the symmetric tensor $\E_{\mu_{n}^{\prime}}Y^{\otimes n}$,
defined by the polarization of $\xi\mapsto\E_{\mu_{n}^{\prime}}\left\langle Y,\xi\right\rangle ^{n}$,
is Hilbert-Schmidt. \label{thm:HilbertSchmidtMoments}\end{thm}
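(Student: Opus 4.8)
The plan is to reduce the whole statement to the single structural fact that $Y^{\otimes n}$ is itself a generalized random vector in the Hilbert tensor power $H^{\otimes n}$, and then hand the resulting operator to the factorization theorem. This reduction suffices for the following reason: the dual of the Hilbert space $H^{\otimes n}$ is $H^{\otimes n}$ itself, and its elements are exactly the Hilbert--Schmidt tensors; hence the symmetric tensor $\E_{\mu_{n}^{\prime}}Y^{\otimes n}$ is Hilbert--Schmidt as soon as the functional $\lambda\mapsto\E_{\mu_{n}^{\prime}}\left\langle Y^{\otimes n},\lambda\right\rangle$ is bounded on $H^{\otimes n}$ by a multiple of $\norm{\lambda}_{2}$. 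Likewise, since $\left\langle Y,\xi\right\rangle ^{n}=\left\langle Y^{\otimes n},\xi^{\otimes n}\right\rangle$ and $\norm{\xi^{\otimes n}}_{2}=\norm{\xi}^{n}$, a bound $\E_{\mu_{n}^{\prime}}\left|\left\langle Y^{\otimes n},\lambda\right\rangle \right|\le C\norm{\lambda}_{2}$ simultaneously yields the finite absolute $n$-th moments. Thus, once $Y^{\otimes n}$ is known to be a continuous operator $H^{\otimes n}\to L^{0}\left(\mu\right)$, I would apply the Maurey--Nikishin factorization theorem (Theorem \ref{thm:Factorization}) to it: this produces an equivalent measure $\mu_{n}^{\prime}\sim\mu$ for which $Y^{\otimes n}$ is bounded into $L^{2,\infty}\left(\mu_{n}^{\prime}\right)\subset L^{1}\left(\mu_{n}^{\prime}\right)$, which is precisely the bound just required. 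The measure is allowed to depend on $n$, exactly as in the statement.

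The heart of the matter is therefore to prove that $Y^{\otimes n}$ is a generalized random vector in $H^{\otimes n}$, and this is where the randomized-shift hypothesis enters. Write $\nu:=\Law_{\P\otimes\mu}\left[X+Y\right]$ and $\gamma:=\Law_{\P}X$; by hypothesis $\nu\ll\gamma$. On the Gaussian side the Wick power $\Wick{\cdot^{\otimes n}}$ is, by construction, a generalized random vector in $H^{\otimes n}$ over $\gamma$ (the $L^{2}$-continuous extension recalled before the statement; in fact $\left\langle \Wick{\cdot^{\otimes n}},\lambda\right\rangle$ is $L^{2}\left(\gamma\right)$-bounded by $\sqrt{n!}\,\norm{\lambda}_{2}$). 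Since convergence in $\gamma$-measure forces convergence in $\nu$-measure, $\lambda\mapsto\left\langle \Wick{\cdot^{\otimes n}},\lambda\right\rangle$ is also continuous into $L^{0}\left(\nu\right)$; pulling back along $X+Y$ (which pushes $\P\otimes\mu$ forward to $\nu$) shows that $\lambda\mapsto\left\langle \Wick{\left(X+Y\right)^{\otimes n}},\lambda\right\rangle$ is a generalized random vector in $H^{\otimes n}$ over $\left(\OMEGA\times\mathcal{T},\P\otimes\mu\right)$.

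Now I would induct on $n$ using the polynomial shift identity
\[
\Wick{\left(X+Y\right)^{\otimes n}}=\sum_{k=0}^{n}\binom{n}{k}\Wick{X^{\otimes k}}\mathbin{\tilde{\otimes}}Y^{\otimes\left(n-k\right)},
\]
solved for the $k=0$ term:
\[
Y^{\otimes n}=\Wick{\left(X+Y\right)^{\otimes n}}-\sum_{k=1}^{n}\binom{n}{k}\Wick{X^{\otimes k}}\mathbin{\tilde{\otimes}}Y^{\otimes\left(n-k\right)}.
\]
The first term on the right is a generalized vector by the previous paragraph. For $k\ge1$ each summand is the tensor product of the $L^{2}$-bounded generalized vector $\Wick{X^{\otimes k}}$ over $\OMEGA$ with the generalized vector $Y^{\otimes\left(n-k\right)}$ over $\mathcal{T}$, the latter available by the inductive hypothesis; a tensoring lemma — convergence in measure of $\sum_{i}\sigma_{i}\left\langle \Wick{X^{\otimes k}},u_{i}\right\rangle \left\langle Y^{\otimes\left(n-k\right)},v_{i}\right\rangle$ for a singular-value decomposition $\lambda=\sum_{i}\sigma_{i}\,u_{i}\otimes v_{i}$, using $\E_{\P}\sum_{i}\sigma_{i}^{2}\left\langle \Wick{X^{\otimes k}},u_{i}\right\rangle ^{2}\le k!\,\norm{\lambda}_{2}^{2}<\infty$ to reduce to the defining $\ell^{2}$-property of $Y^{\otimes\left(n-k\right)}$ — shows these summands are generalized vectors in $H^{\otimes n}$ over $\OMEGA\times\mathcal{T}$. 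Hence so is $Y^{\otimes n}$; and since $Y^{\otimes n}$ does not depend on $\omega$, it descends (the $\omega$-independent functions forming a copy of $L^{0}\left(\mu\right)$ inside $L^{0}\left(\P\otimes\mu\right)$) to a generalized vector in $H^{\otimes n}$ over $\left(\mathcal{T},\mu\right)$, closing the induction and the reduction.

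The step I expect to be the main obstacle is precisely this transfer of the Wick-calculus regularity of $\Wick{\cdot^{\otimes n}}$ across the absolute continuity $\nu\ll\gamma$, and within it the tensoring lemma: its point is that the mixed term $\Wick{X^{\otimes k}}\mathbin{\tilde{\otimes}}Y^{\otimes\left(n-k\right)}$ is a genuine $L^{0}$-continuous operator even though $Y^{\otimes\left(n-k\right)}$ carries only $L^{0}$- rather than $L^{p}$-control. Once that is established, the factorization theorem supplies $\mu_{n}^{\prime}$ and both moment bounds essentially for free.
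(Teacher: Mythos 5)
Your proposal is correct, and it shares the paper's overall skeleton---prove that $Y^{\otimes n}$ is a generalized random vector in $H^{\otimes n}$, i.e.\ a continuous operator $H^{\otimes n}\to L^{0}\left(\mu\right)$, then hand it to the Maurey--Nikishin theorem (Theorem \ref{thm:Factorization}) exactly as in the paper's last step---but your mechanism for that continuity is genuinely different. The paper never expands the Wick power of the shifted vector: it first shows (Lemma \ref{lem:CY}, which rests on the GMC--shift correspondence of Theorem \ref{thm:GMCShifts}) that $cY$ is a randomized shift for all $\left|c\right|\le1$, so that for each fixed $c$ the family $P_{\lambda}\left(X+cY\right)$, $\norm{\lambda}_{2}\le1$, is bounded in probability; since for finite-rank $\lambda$ the map $c\mapsto P_{\lambda}\left(X+cY\right)$ is a degree-$n$ polynomial with leading coefficient $\left\langle \lambda,Y^{\otimes n}\right\rangle $, the iterated finite difference \eqref{eq:nCoeff} yields boundedness in probability over the unit ball in one stroke, with no need to make sense of the mixed terms $\Wick{X^{\otimes k}}\mathbin{\tilde{\otimes}}Y^{\otimes\left(n-k\right)}$ at all. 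You use only the $c=1$ absolute continuity $\Law_{\P\otimes\mu}\left[X+Y\right]\ll\Law X$ and instead pay with an induction on $n$, the tensoring lemma, and a descent from $L^{0}\left(\P\otimes\mu\right)$ to $L^{0}\left(\mu\right)$. The trade-off: your route is free of GMC machinery (no Theorem \ref{thm:GMCShifts}, no Lemma \ref{lem:CY}), consuming the randomized-shift hypothesis directly, whereas the paper's interpolation trick is shorter and bypasses all of your auxiliary lemmas. Two details you should still nail down: (i) the shift identity must be justified for a \emph{generalized} $Y$, for which ``$Y\left(t\right)$'' is not a vector of $H$---read it as defining the $k=0$ term, and check agreement with the a priori $Y^{\otimes n}$ on finite-rank tensors $\xi^{\otimes n}$ using that the pullback of the measurable linear functional $\left\langle \cdot,\xi\right\rangle $ along $X+Y$ equals $\left\langle X,\xi\right\rangle +\left\langle Y,\xi\right\rangle $ almost everywhere, after which it reduces to the scalar Hermite addition formula (the paper needs the same additivity ingredient, implicitly, to see that $P_{\lambda}\left(X+cY\right)$ is a polynomial in $c$); (ii) the tensoring lemma needs not just well-definedness but continuity in $\lambda$, which follows by writing your sum as $\left\langle Y^{\otimes\left(n-k\right)},\left(\Wick{X^{\otimes k}}\otimes\id\right)\lambda\right\rangle $ with $\Wick{X^{\otimes k}}\otimes\id$ bounded into $L^{2}\left(\P;H^{\otimes\left(n-k\right)}\right)$ and then using continuity of $Y^{\otimes\left(n-k\right)}$. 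Finally, a minor point: the paper's Theorem \ref{thm:Factorization} gives a bounded factorization through $L^{2}\left(\mu_{n}^{\prime}\right)$, slightly stronger than the weak-type $L^{2,\infty}$ bound you quote; either suffices for the two moment conclusions.
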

\begin{cor}
In the setting of Theorem \ref{thm:HilbertSchmidtMoments} the quadratic
form
\[
f\mapsto\norm{\intop f\left(t\right)Y\left(t\right)\mu_{n}^{\prime}\left(dt\right)}^{2}
\]
is Hilbert-Schmidt. Thus there exists a unique symmetric function
$K\in L^{2}\left(\mu_{n}^{\prime}\otimes\mu_{n}^{\prime}\right)$,
such that
\[
\norm{\intop f\left(t\right)Y\left(t\right)\mu_{n}^{\prime}\left(dt\right)}^{2}=\intop K\left(t,s\right)f\left(t\right)f\left(s\right)\mu_{n}^{\prime}\left(dt\right)\mu_{n}^{\prime}\left(ds\right)
\]
for all $f\in L^{2}\left(\mu_{n}^{\prime}\right)$. Moreover,
\[
\intop\left(K\left(t,s\right)\right)^{n}\mu_{n}^{\prime}\left(dt\right)\mu_{n}^{\prime}\left(ds\right)<\infty.
\]
\label{cor:Moments}\end{cor}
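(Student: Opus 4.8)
The plan is to read both assertions off Theorem~\ref{thm:HilbertSchmidtMoments} by passing between tensors in $H^{\otimes k}$ and operators on $L^{2}$. Fix $n$ and abbreviate $\nu:=\mu_{n}^{\prime}$. Since $\nu$ is a probability measure and $n\ge 2$, the hypothesis that every $\left\langle Y,\xi\right\rangle \in L^{n}(\nu)$ gives $\left\langle Y,\xi\right\rangle \in L^{2}(\nu)$, so the weak integral $Tf:=\intop f(t)Y(t)\,\nu(dt)\in H$ is a well-defined vector for each $f\in L^{2}(\nu)$, and the operator $T\colon L^{2}(\nu)\to H$ is closed (its graph is closed because $\left\langle Y,\cdot\right\rangle $ is continuous into $L^{0}(\nu)$ and $L^{2}$-convergence implies convergence in measure), hence bounded. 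Its adjoint is $T^{*}\xi=\left\langle Y,\xi\right\rangle $, and the quadratic form in the statement is $f\mapsto\norm{Tf}^{2}=\left\langle T^{*}Tf,f\right\rangle $. Thus it is Hilbert--Schmidt exactly when $T^{*}T$ is; since $T^{*}T$ and $TT^{*}$ share their nonzero eigenvalues, this is equivalent to $TT^{*}$ being Hilbert--Schmidt on $H$. For a fixed orthonormal basis $\left\{ e_{i}\right\} $ of $H$ the coefficients of $TT^{*}$ coincide with those of the symmetric $2$-tensor $\E_{\nu}Y^{\otimes 2}$, namely $\left\langle TT^{*}e_{i},e_{j}\right\rangle =\intop\left\langle Y,e_{i}\right\rangle \left\langle Y,e_{j}\right\rangle d\nu=\left\langle \E_{\nu}Y^{\otimes 2},e_{i}\otimes e_{j}\right\rangle $, so the quadratic form is Hilbert--Schmidt iff $\E_{\nu}Y^{\otimes 2}$ is Hilbert--Schmidt. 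Once this holds, $T^{*}T$ is an integral operator, which is the standard fact that a Hilbert--Schmidt operator on $L^{2}(\nu)$ has a unique (here symmetric) kernel $K\in L^{2}(\nu\otimes\nu)$; this $K$ is the covariance $K(t,s)=\left\langle Y(t),Y(s)\right\rangle $, well defined $\mu\otimes\mu$-a.e.

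So the first assertion reduces to the Hilbert--Schmidt property of $\E_{\nu}Y^{\otimes 2}$, and the second to evaluating $\intop\intop K^{n}\,d\nu\,d\nu$. For the moment I would use the elementary tensor identity $\left\langle Y(t)^{\otimes n},Y(s)^{\otimes n}\right\rangle =\left\langle Y(t),Y(s)\right\rangle ^{n}=K(t,s)^{n}$ together with $\E_{\nu}Y^{\otimes n}=\intop Y(t)^{\otimes n}\nu(dt)$ to obtain
\[
\norm{\E_{\nu}Y^{\otimes n}}_{2}^{2}=\left\langle \E_{\nu}Y^{\otimes n},\E_{\nu}Y^{\otimes n}\right\rangle =\intop\intop\left\langle Y(t)^{\otimes n},Y(s)^{\otimes n}\right\rangle \nu(dt)\nu(ds)=\intop\intop K(t,s)^{n}\,\nu(dt)\nu(ds),
\]
so that $\intop\intop K^{n}\,d\nu\,d\nu=\norm{\E_{\nu}Y^{\otimes n}}_{2}^{2}<\infty$ is exactly the conclusion of Theorem~\ref{thm:HilbertSchmidtMoments}.

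It remains to secure the Hilbert--Schmidt property of the $2$-tensor. Here I would invoke Theorem~\ref{thm:HilbertSchmidtMoments} at order $2$ to obtain an equivalent measure making $\E Y^{\otimes 2}$ Hilbert--Schmidt; by the equivalence above this already shows that $K$ is a genuine symmetric function, defined $\mu\otimes\mu$-a.e.\ independently of the measure used, which is the regularity claim. To return to $\nu=\mu_{n}^{\prime}$ and close the first assertion for $\nu$ itself, I would argue for even $n$ by Jensen on the probability space $(\mathcal{T}\times\mathcal{T},\nu\otimes\nu)$: since $K^{n}=\left|K\right|^{n}\ge 0$, one gets $\intop\intop K^{2}\,d\nu\,d\nu\le\bigl(\intop\intop\left|K\right|^{n}\,d\nu\,d\nu\bigr)^{2/n}=\norm{\E_{\nu}Y^{\otimes n}}_{2}^{4/n}<\infty$, i.e.\ $K\in L^{2}(\nu\otimes\nu)$ and the quadratic form for $\nu$ is Hilbert--Schmidt; the odd case follows by the same computation run at the even order $n+1$ (or $2n$), using that $\bigcap_{p}L^{p}=\bigcap_{m}L^{2m}$.

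The main obstacle is precisely the passage from the abstract tensor moment to a bona fide function, i.e.\ the first equivalence ``$\E_{\nu}Y^{\otimes 2}$ Hilbert--Schmidt $\Rightarrow K\in L^{2}$''. The delicate point is that one \emph{cannot} extract lower-order regularity by contracting the higher tensor $\E_{\nu}Y^{\otimes n}$: tracing out $n-2$ indices produces the diagonal $\sum_{i}\left\langle Y,e_{i}\right\rangle ^{2}=\norm{Y(t)}^{2}=K(t,t)$, which is typically infinite $\nu$-a.e.\ (this is exactly why $X$ requires Wick renormalization while $Y$ does not), so the order-$2$ information must come from Theorem~\ref{thm:HilbertSchmidtMoments} applied at order $2$, not from the order-$n$ statement. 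The remaining care is bookkeeping: justifying the interchange of $\sum_{\vec{\imath}}$ with $\intop\intop$ in the moment identity --- clean for even $n$ by a finite-rank truncation $K_{N}=\sum_{i\le N}\left\langle Y,e_{i}\right\rangle \left\langle Y,e_{i}\right\rangle $ together with nonnegativity of $K_{N}^{n}$ --- and reconciling the equivalent measures across orders $2$ and $n$, which is harmless since being a function and lying in every $L^{p}$ depend only on the measure class.
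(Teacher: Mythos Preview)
Your proposal is correct and follows essentially the same route as the paper: identify the Hilbert--Schmidt norm of the quadratic form with $\norm{\E_{\nu}Y^{\otimes 2}}_{2}^{2}$ via the coordinate computation, express $K$ as the $L^{2}$-convergent series $\sum_{i}Y_{i}(t)Y_{i}(s)$, and then bound $\intop K^{n}\,d\nu\,d\nu$ for even $n$ by a truncation-plus-Fatou argument, reducing odd $n$ to even.

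The one place where you diverge from the paper is worth noting. The paper simply asserts that $\norm{\E_{\mu_{n}^{\prime}}Y^{\otimes 2}}_{2}^{2}<\infty$ ``by Theorem~\ref{thm:HilbertSchmidtMoments}'', implicitly taking for granted that the measure $\mu_{n}^{\prime}$ produced at order $n$ also controls the order-$2$ tensor. You correctly observe that this is not automatic --- contracting $\E_{\nu}Y^{\otimes n}$ down to order $2$ would hit the divergent diagonal --- and instead you first invoke the theorem at order $2$ (with its own measure) to secure that $K$ is a genuine function in $L^{0}(\mu\otimes\mu)$, then run the Fatou argument at order $n$ with respect to $\nu=\mu_{n}^{\prime}$, and finally recover $K\in L^{2}(\nu\otimes\nu)$ a posteriori from $K\in L^{n}(\nu\otimes\nu)$ via Jensen. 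This is a clean way to close what is arguably a small gap in the paper's presentation; the paper's intended reading is presumably that $\mu_{n}^{\prime}$ is chosen from the outset to work simultaneously for orders $2$ and $n$ (e.g.\ by taking the pointwise minimum of the two factorization densities), which is equally valid but left unsaid.
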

\begin{rem}
Yet another interpretation of this result could be: the existence
of a subcritical GMC, which is the Wick exponential of the Gaussian
field, implies the existence of all Wick powers ``$\Wick{\left(X\left(t\right)\right)^{n}}$''
of that field, due to the formal identity
\begin{equation}
\E\left|\intop\Wick{\left(X\left(t\right)\right)^{n}}\mu_{n}^{\prime}\left(dt\right)\right|^{2}=\intop\left(K\left(t,s\right)\right)^{n}\mu_{n}^{\prime}\left(dt\right)\mu_{n}^{\prime}\left(ds\right)<\infty.\label{eq:WickL2}
\end{equation}

\end{rem}

\begin{rem}
For a randomized shift $Y$ there is a single measure $\mu^{\prime}\sim\mu$,
such that
\[
\intop\left(K\left(t,s\right)\right)^{n}\mu^{\prime}\left(dt\right)\mu^{\prime}\left(ds\right)<\infty
\]
for all $n$ simultaneously. Such a measure can be constructed as
follows:
\[
\mu^{\prime}\left(dt\right):=\inf_{n}\left(r_{n}\mu_{n}^{\prime}\left(dt\right)/\mu\left(dt\right)\right)\cdot\mu\left(dt\right),
\]
where $\left(r_{n}\right)$ is a sequence of positive numbers that
decreases fast enough (namely, so that $\mu\left(dt\right)/\mu_{n}^{\prime}\left(dt\right)=O\left(r_{n}\right),n\to\infty$
for $\mu$-almost all $t$). This fact is not used anywhere in the
text.
\end{rem}

\begin{rem}
Instead of the factorization theorem, which only implies that $K^{n}$
is a bounded bilinear form on $L^{2}\left(\mu^{\prime}\right)$ test
functions for some $\mu^{\prime}$, one can use a sharp bound, replacing
$L^{2}$ by the Orlicz space $L\left(\log L\right)^{n/2}$. This follows
by Orlicz space duality from the a priori $\exp\left(-\const\cdot x^{2/n}\right)$
tail decay of the distribution of random variables of the form $\left\langle \lambda,\Wick{X^{\otimes n}}\right\rangle $,
$\lambda\in H^{\otimes n}$ (see e.g. \cite[Theorem 6.7]{Janson}),
which carries over with a change of measure to $\left\langle \lambda,\Wick{\left(X+cY\right)^{\otimes n}}\right\rangle $
and thus also to $\left\langle \lambda,Y^{\otimes n}\right\rangle $.
The resulting bound,
\[
\intop\left(K\left(t,s\right)\right)^{n}f\left(t\right)g\left(s\right)\mu^{\prime}\left(dt\right)\mu^{\prime}\left(ds\right)\le\const\cdot\norm f_{L\left(\log L\right)^{2/n}}\norm g_{L\left(\log L\right)^{n/2}},
\]
yields the following estimate: if $\mathcal{T}=\left[0,1\right],\mu^{\prime}=\op{Lebesgue}$,
$f:=\frac{1}{\varepsilon}\I\left[a,a+\varepsilon\right],g:=\frac{1}{\varepsilon}\I\left[b,b+\varepsilon\right]$
for some $a,b\in\left[0,1-\varepsilon\right]$, then we have 
\[
\op{ess}\sup\left(K^{n}\ast\left(\varepsilon^{-1}\I\left[0,\varepsilon\right]\right)^{\otimes2}\right)=O\left(\left|\log\varepsilon\right|^{n}\right),\varepsilon\to0.
\]
By considering the logarithmic kernels one can see that this bound
is sharp. This will not be used in the paper.
\end{rem}

\begin{rem}
A weaker bound, $K\in L^{2}\left(\mu^{\prime}\otimes\mu^{\prime}\right)$
for some $\mu^{\prime}\sim\mu$, can be proved without the subcriticality
assumption using a different approach. This will be presented elsewhere.
\end{rem}

\subsection{Approximation \label{sub:Approximation}}

Let $Y_{n},n\ge1$ be randomized shifts defined on a probability space
$\left(\mathcal{T},\mu\right)$. Let $K_{Y_{n}Y_{n}}\left(t,s\right):=\left\langle Y_{n}\left(t\right),Y_{n}\left(s\right)\right\rangle $
be the corresponding kernel, which, by Corollary \ref{cor:Moments},
is well-defined as a function on $\left(\mathcal{T}\times\mathcal{T},\mu\otimes\mu\right)$.
Let $M_{Y_{n}}$ be the subcritical GMC associated to $Y_{n}$ with
expectation $\mu$.

Our main result on the approximation of subcritical GMC is as follows:
\begin{thm}
Assume that:
\begin{itemize}
\item The family of random variables $\left\{ M_{Y_{n}}\left[\mathcal{T}\right]\right\} $
is uniformly integrable;
\item There exists a generalized $H$-valued function $Y$ defined on $\left(\mathcal{T},\mu\right)$
that is the limit of $Y_{n}$ in the sense that
\begin{equation}
\forall\xi\in H:\left\langle Y_{n},\xi\right\rangle \overset{L^{0}\left(\mu\right)}{\to}\left\langle Y,\xi\right\rangle .\label{eq:ConvergenceOfShifts}
\end{equation}

\end{itemize}
Then $Y$ is a randomized shift. If, furthermore,
\begin{itemize}
\item the kernels $K_{Y_{n}Y_{n}}$ converge to $K_{YY}$ in $L^{0}\left(\mu\otimes\mu\right)$,
\end{itemize}
then the subcritical GMC $M_{Y}$ (associated to $Y$ with expectation
$\mu$) is the limit of $M_{Y_{n}}$ in the sense that
\begin{equation}
\forall f\in L^{1}\left(\mu\right):\intop f\left(t\right)M_{Y_{n}}\left(X,dt\right)\overset{L^{1}}{\to}\intop f\left(t\right)M_{Y}\left(X,dt\right).\label{eq:ConvergenceOfGMC}
\end{equation}

\label{thm:Approximation}
\end{thm}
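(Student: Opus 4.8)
The plan is to transport everything to the language of randomized shifts and Peyrière densities, where Theorem~\ref{thm:GMCShifts} and Corollary~\ref{cor:Uniqueness} give exact control. Write $\gamma:=\Law_{\P}X$, $\nu_{n}:=\Law_{\P\otimes\mu}\left[X+Y_{n}\right]$ and $\nu:=\Law_{\P\otimes\mu}\left[X+Y\right]$. By Corollary~\ref{cor:Uniqueness} the total mass $D_{n}:=M_{Y_{n}}\left[\mathcal{T}\right]$ is exactly the Radon--Nikodym density $d\nu_{n}/d\gamma$, viewed as a function of $X$. The first step is to show that $Y$ is a randomized shift. Since $\left\langle Y_{n},\xi\right\rangle \to\left\langle Y,\xi\right\rangle $ in $L^{0}\left(\mu\right)$ for every $\xi$ and $X$ is independent of $t$, the functionals $\left\langle X+Y_{n},\xi\right\rangle $ converge to $\left\langle X+Y,\xi\right\rangle $ in $L^{0}\left(\P\otimes\mu\right)$, hence all finite-dimensional distributions converge and $\nu_{n}\to\nu$ weakly. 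Uniform integrability of $\left\{ D_{n}\right\} $ makes the family relatively weakly compact in $L^{1}\left(\gamma\right)$ (Dunford--Pettis); for any weak limit point $D$ and any bounded continuous $\phi$ we have $\intop\phi\,d\nu_{n}=\E\left[\phi\left(X\right)D_{n}\right]\to\E\left[\phi\left(X\right)D\right]=\intop\phi D\,d\gamma$, while simultaneously $\intop\phi\,d\nu_{n}\to\intop\phi\,d\nu$. Thus $\nu=D\gamma\ll\gamma$, so $Y$ is a randomized shift, $M_{Y}$ exists with $D=M_{Y}\left[\mathcal{T}\right]$ by Theorem~\ref{thm:GMCShifts}, and $D_{n}\rightharpoonup D$ weakly in $L^{1}$.

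Next I reduce the convergence~\eqref{eq:ConvergenceOfGMC} for general $f$ to the case $f\equiv1$, i.e.\ to total masses. For bounded $f\ge0$ the random measure $fM_{Y_{n}}$ satisfies the defining shift relation~\eqref{eq:DefGMC} for the same $Y_{n}$ and has expectation $f\mu$, so it is the subcritical GMC over $\left(X,Y_{n}\right)$ with base measure $f\mu$, and its total mass is $\intop f\,dM_{Y_{n}}$. All hypotheses are inherited by $\left(\mathcal{T},f\mu\right)$: uniform integrability because $\intop f\,dM_{Y_{n}}\le\norm f_{\infty}D_{n}$, and the convergences because convergence in $\mu$-measure implies convergence in $f\mu$-measure for $f\in L^{1}\left(\mu\right)$. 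Hence it suffices to prove $D_{n}\to D$ in $L^{1}\left(\P\right)$ for an arbitrary finite base measure; signed and merely integrable $f$ then follow by splitting into positive and negative parts and a routine truncation controlled by uniform integrability.

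It remains to upgrade the weak $L^{1}$ convergence $D_{n}\rightharpoonup D$ to strong convergence. A weakly convergent sequence in $L^{1}$ is uniformly integrable, so by the Vitali convergence theorem it is enough to prove $D_{n}\to D$ in probability. This is where the kernel hypothesis enters, through the second-moment identity
\[
\E\left[D_{n}^{2}\right]=\intop e^{K_{Y_{n}Y_{n}}\left(t,s\right)}\mu\left(dt\right)\mu\left(ds\right),\qquad\E\left[D^{2}\right]=\intop e^{K_{YY}\left(t,s\right)}\mu\left(dt\right)\mu\left(ds\right),
\]
valid in $\left[0,\infty\right]$. I would establish it via the finite-rank truncations $Y_{n}^{N}:=P_{N}Y_{n}$, with $P_{N}$ the projection onto $e_{1},\dots,e_{N}$, for which Example~\ref{ex:TrivialGMC} gives the GMC explicitly and the identity by a direct Gaussian computation, and then pass $N\to\infty$ using $K_{Y_{n}^{N}Y_{n}^{N}}\to K_{Y_{n}Y_{n}}$. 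In the regime where the right-hand sides are finite, $\left\{ D_{n}\right\} $ is bounded in $L^{2}$, so the weak $L^{1}$ limit is also a weak $L^{2}$ limit; since $K_{Y_{n}Y_{n}}\to K_{YY}$ in measure, a domination argument gives $\E\left[D_{n}^{2}\right]\to\E\left[D^{2}\right]$, and weak $L^{2}$ convergence together with convergence of the $L^{2}$ norms yields strong $L^{2}$ convergence in $L^{2}\left(\P\right)$, hence convergence in probability. The general subcritical case, where $\E\left[D^{2}\right]$ may be infinite, is reduced to this by a truncation isolating the large values of $D_{n}$, whose contribution is uniformly small by the uniform integrability assumption.

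The main obstacle is precisely this last reduction. We only ever control the diagonal kernels $K_{Y_{n}Y_{n}}$, never the cross kernels $K_{Y_{n}Y_{m}}$ or $K_{Y_{n}Y}$; this rules out a direct Cauchy-in-$L^{2}$ estimate and forces the less direct ``weak convergence plus convergence of norms'' route, which is only available once second moments are finite. Passing from the finite-second-moment case to the general uniformly integrable case is delicate because truncation is nonlinear and does not commute with the weak limit, so the cut-off must be arranged to interact with the second-moment structure rather than applied after taking limits. Establishing the second-moment identity for genuinely generalized (non-$H$-valued) shifts, where the naïve formula of Example~\ref{ex:TrivialGMC} is unavailable pointwise, is the other technical point, requiring care in the interchange of the $N\to\infty$ and $n\to\infty$ limits.
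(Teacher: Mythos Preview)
Your first step --- using Dunford--Pettis to get $D_{n}\rightharpoonup D$ weakly in $L^{1}$ and thereby showing $Y$ is a randomized shift --- is correct and is a clean alternative to the paper's route through Lemma~\ref{lem:CoupledGMC}. The reduction to $f\equiv1$ is also fine.

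The genuine gap is in your third step, and it lies \emph{before} the obstacle you yourself flag. Even in the regime where all second moments are finite and uniformly bounded, the implication ``$K_{Y_{n}Y_{n}}\to K_{YY}$ in $L^{0}(\mu\otimes\mu)$ $\Rightarrow$ $\intop e^{K_{Y_{n}Y_{n}}}\to\intop e^{K_{YY}}$'' is simply false: there is no domination available from the hypotheses. A concrete counterexample within the scope of the theorem: take $\mu$ Lebesgue on $[0,1]$, $Y_{n}(t):=\sqrt{2\log n}\,\I_{[0,1/n]}(t)\,e$ for a unit vector $e$, and $Y:=0$. All three hypotheses hold (one checks $\{D_{n}\}$ is even $L^{2}$-bounded), and indeed $D_{n}\to1$ in $L^{1}$ as the theorem asserts; but $\E[D_{n}^{2}]=\intop e^{K_{Y_{n}Y_{n}}}\,d(\mu\otimes\mu)=2-n^{-2}\to2\neq1=\E[D^{2}]$. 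So your ``weak $L^{2}$ convergence plus convergence of norms'' mechanism breaks down exactly here, and no amount of truncation downstream repairs it.

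The paper avoids second moments entirely. It takes a subsequential distributional limit $(X,M)$ with $M$ \emph{not} assumed $X$-measurable, and proves $M=\Ec{M}{X}=M_{Y}$ by a Jensen trick: Claim~\ref{cl:Gtrick} establishes $\E\,M[\mathcal{T}]\,G(M[\mathcal{T}])\le\E\,M_{Y}[\mathcal{T}]\,G(M[\mathcal{T}])$ for any bounded continuous $G$, via the ``$\exp K$ lemma'' (Lemma~\ref{lem:ExpK}) applied \emph{inside} $G$ --- which is why finiteness of $\intop e^{K}$ never enters. With $G(x)=x/(1+x)$, strict convexity/concavity forces $M[\mathcal{T}]=\Ec{M[\mathcal{T}]}{X}$. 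The kernel-convergence hypothesis is used only to pass the identity $\E\,M_{Y_{n}}[\mathcal{T}]\,G(M_{Y_{n}}[\mathcal{T}])=\E_{\P\otimes\mu}G\bigl(\intop e^{K_{Y_{n}Y_{n}}(t,s)}M_{Y_{n}}(ds)\bigr)$ to the limit, controlled by the uniform-stochastic-absolute-continuity Lemmas~\ref{lem:StochasticLebesgue}--\ref{lem:USACGMC}, not by any integrability of $e^{K}$.
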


\subsection{Application to logarithmic kernels \label{sub:ApplicationLog}}

Let $\mathcal{T}\subset\RR^{d}$ be a bounded domain, let $\mu$ be
the Lebesgue measure on $\mathcal{T}$, and let $K$ be a positive
definite Hilbert-Schmidt kernel on $\left(\mathcal{T},\mu\right)\times\left(\mathcal{T},\mu\right)$,
such that for some $\delta>0$
\begin{equation}
K_{YY}\left(t,s\right)\le\left(2d-\delta\right)\log\norm{t-s}^{-1}+O\left(1\right),\forall t,s\in\mathcal{T}.\label{eq:BoundOnK}
\end{equation}
Consider also a bounded function $\psi$ on $\RR^{d}$ with compact
support, such that $\psi\ge0$, $\intop\psi\left(x\right)dx=1$, and
denote $\psi_{\varepsilon}\left(x\right):=\varepsilon^{-d}\psi\left(\varepsilon^{-1}x\right)$.

Take any generalized random vector $Y$ with $\left\langle Y\left(t\right),Y\left(s\right)\right\rangle :=K_{YY}\left(t,s\right)$.
In order to construct one we may start with a Gaussian field on $\left(\mathcal{T},\mu\right)$
defined by its integrals against test functions. Namely, to every
test function $f\in L^{2}\left(\mu\right)$ we associate the Gaussian
variable $\left\langle X,Af\right\rangle $, where $A:L^{2}\left(\mu\right)\to H$
is a bounded linear operator, such that 
\[
\left\langle Af,Ag\right\rangle =\intop K_{YY}\left(t,s\right)f\left(t\right)g\left(s\right)\mu\left(dt\right)\mu\left(ds\right),
\]
and we construct $Y$ as the composition $H\overset{A}{\to}L^{2}\left(\mathcal{T},\mu\right)\overset{\id}{\to}L^{0}\left(\mathcal{T},\mu\right)$.
\begin{thm}
Let 
\[
Y_{\varepsilon}\left(t\right):=\intop_{\mathcal{T}}Y\left(t^{\prime}\right)\psi_{\varepsilon}\left(t-t^{\prime}\right)dt^{\prime},
\]
\[
K_{Y_{\varepsilon},Y_{\varepsilon}}\left(t,s\right):=\left\langle Y_{\varepsilon}\left(t\right),Y_{\varepsilon}\left(s\right)\right\rangle =\intop_{\mathcal{T}\times\mathcal{T}}K_{YY}\left(t^{\prime},s^{\prime}\right)\psi_{\varepsilon}\left(t-t^{\prime}\right)\psi_{\varepsilon}\left(s-s^{\prime}\right)dt^{\prime}\,ds^{\prime},
\]
\[
M_{Y_{\varepsilon}}\left(dt\right):=\exp\left[\left\langle X,Y_{\varepsilon}\left(t\right)\right\rangle -\frac{1}{2}K_{Y_{\varepsilon}Y_{\varepsilon}}\left(t,t\right)\right]dt.
\]
Then there exists a subcritical GMC $M_{Y}$ over $\left(X,Y\right)$,
and $M_{Y_{\varepsilon}}\to M_{Y}$ in probability (the space of measures
is equipped with the weak topology). This $M_{Y}$ does not depend
on the function $\psi$ used for approximation. \label{thm:ApproximationLog}
\end{thm}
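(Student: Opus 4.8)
The plan is to derive Theorem~\ref{thm:ApproximationLog} from the general approximation theorem (Theorem~\ref{thm:Approximation}) applied to the convolution family $Y_\varepsilon$, so the work is to verify its hypotheses and translate its conclusion. First I would observe that each $Y_\varepsilon$ is an honest $H$-valued function rather than a merely generalized one. Since $\psi_\varepsilon$ is bounded with compact support and $\langle Y,\xi\rangle\in L^2(\mu)$ for every $\xi$ (because $Y$ factors through $L^2(\mu)$ by construction), the convolution $\langle Y_\varepsilon(t),\xi\rangle=(\psi_\varepsilon\ast\langle Y,\xi\rangle)(t)$ is defined pointwise, and $\norm{Y_\varepsilon(t)}^2=K_{Y_\varepsilon Y_\varepsilon}(t,t)=\int K_{YY}(t',s')\psi_\varepsilon(t-t')\psi_\varepsilon(t-s')\,dt'\,ds'$ is finite for a.e.\ $t$, because the at-most-logarithmic diagonal singularity of $K_{YY}$ is integrable against the bounded kernel $\psi_\varepsilon\otimes\psi_\varepsilon$. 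Hence $Y_\varepsilon(t)\in H$ a.e., so by the Cameron--Martin theorem $Y_\varepsilon$ is a (trivial) randomized shift, and the explicit formula of Example~\ref{ex:TrivialGMC} produces precisely the subcritical GMC $M_{Y_\varepsilon}$ of the statement, with $\E M_{Y_\varepsilon}=\mu$.

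The two convergence hypotheses of Theorem~\ref{thm:Approximation} are then routine mollification facts. For the shifts, $\langle Y_\varepsilon,\xi\rangle=\psi_\varepsilon\ast\langle Y,\xi\rangle$ with $\langle Y,\xi\rangle\in L^2(\mu)$, so $\langle Y_\varepsilon,\xi\rangle\to\langle Y,\xi\rangle$ in $L^2(\mathcal T,\mu)$ and a fortiori in $L^0(\mu)$, which is \eqref{eq:ConvergenceOfShifts}. For the kernels, $K_{Y_\varepsilon Y_\varepsilon}=(\psi_\varepsilon\otimes\psi_\varepsilon)\ast K_{YY}$ with $K_{YY}\in L^2(\mu\otimes\mu)$ by the Hilbert--Schmidt assumption, so $K_{Y_\varepsilon Y_\varepsilon}\to K_{YY}$ in $L^2(\mu\otimes\mu)$, hence in $L^0(\mu\otimes\mu)$. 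Boundary effects are harmless, since one simply compares the zero-extensions in $L^2(\RR^d)$ and restricts to $\mathcal T$.

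The only substantial point is the uniform integrability of $\{M_{Y_\varepsilon}[\mathcal T]\}$, which I would obtain as a uniform bound $\sup_\varepsilon\E[(M_{Y_\varepsilon}[\mathcal T])^p]<\infty$ for some $p>1$ via Kahane's convexity inequality. The starting observation is that, because $K_{YY}(t,s)\le(2d-\delta)\log^{+}\norm{t-s}^{-1}+O(1)$, the mollified kernel obeys the uniform bound $K_{Y_\varepsilon Y_\varepsilon}(t,s)\le(2d-\delta)\log^{+}(\norm{t-s}\vee\varepsilon)^{-1}+C$ with $C$ independent of $\varepsilon$; the cutoff at scale $\varepsilon$ is exactly the diagonal behaviour of the $\varepsilon$-regularization of a log field with the same leading coefficient. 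I would then fix a reference $\sigma$-positive, subcritical logarithmic field $X^{\ast}$ (coefficient $2d-\delta<2d$) for which Kahane's construction \cite{KahaneSur} yields a GMC $M^{\ast}$ with finite $p$-th moment for $p\in(1,2d/(2d-\delta))$, together with uniform-in-$\varepsilon$ control of the $p$-th moments of its regularizations $M^{\ast}_\varepsilon$. Choosing $X^{\ast}$ so that its $\varepsilon$-regularized covariance $K^{\ast}_\varepsilon$ dominates $K_{Y_\varepsilon Y_\varepsilon}-C$ at matching scales, Kahane's inequality (with the convex function $x\mapsto x^p$ and an auxiliary independent Gaussian to absorb the additive constant $C$, the regularized fields being continuous) gives $\E[(M_{Y_\varepsilon}[\mathcal T])^p]\le e^{Cp(p-1)/2}\,\E[(M^{\ast}_\varepsilon[\mathcal T])^p]\le\const$, hence uniform $L^p$-boundedness and uniform integrability. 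The main obstacle is precisely this comparison: producing a reference field whose regularization dominates $K_{Y_\varepsilon Y_\varepsilon}$ pointwise at every scale $\varepsilon$ while keeping its own regularized GMC moments bounded, i.e.\ correctly matching the $\varepsilon$-cutoff of the two families.

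With all hypotheses verified, Theorem~\ref{thm:Approximation} yields that $Y$ is a randomized shift (so by Theorem~\ref{thm:GMCShifts} the subcritical $M_Y$ with $\E M_Y=\mu$ exists) and that $\int f\,dM_{Y_\varepsilon}\to\int f\,dM_Y$ in $L^1(\P)$ for every $f\in L^1(\mu)$, along any sequence $\varepsilon_n\to0$. To pass from this to convergence in probability for the weak topology, I would fix a countable family of continuous bounded functions determining weak convergence of finite measures on the bounded domain $\mathcal T$; $L^1$-convergence of each coordinate gives convergence in probability, and a diagonal argument over the countable family gives convergence in probability in the metrizable weak topology, first along sequences and then as $\varepsilon\to0$. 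Finally, independence of $\psi$ is immediate from uniqueness: by Corollary~\ref{cor:Uniqueness} the subcritical GMC with expectation $\mu$ associated to $Y$ is unique, and the limit $M_Y$ furnished by Theorem~\ref{thm:Approximation} depends only on $Y$, not on the mollifier used to approximate it.
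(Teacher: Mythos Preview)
Your overall strategy matches the paper's: verify the hypotheses of Theorem~\ref{thm:Approximation} and invoke Corollary~\ref{cor:Uniqueness} for independence of $\psi$. The verification of $\langle Y_\varepsilon,\xi\rangle\to\langle Y,\xi\rangle$ and $K_{Y_\varepsilon Y_\varepsilon}\to K_{YY}$ is essentially identical to the paper's, and your extra care about $Y_\varepsilon(t)\in H$ and the passage from $L^1$ convergence of integrals to weak convergence in probability is fine (the paper skips these as routine).

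The one place where the paper's argument differs, and is cleaner, is uniform integrability. You try to get a uniform $L^p$ bound by comparing $K_{Y_\varepsilon Y_\varepsilon}$ to the $\varepsilon$-regularization of a single reference logarithmic field, and correctly flag as the ``main obstacle'' that one must match the $\varepsilon$-cutoffs of two different mollifications so that pointwise domination holds at every scale. The paper sidesteps this entirely: rather than comparing regularization to regularization, it uses the explicit one-parameter family $\tilde K_{C,\gamma}(t,s)=\gamma^2\int_1^C e^{-u\|t-s\|}\,du/u$ of \emph{continuous} kernels (Theorem~\ref{thm:ExistenceClassical}), which is known to produce a uniformly integrable family of GMCs as $C\to\infty$. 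Since each $K_{Y_\varepsilon Y_\varepsilon}$ is itself continuous (hence bounded), the bound \eqref{eq:BoundOnK} gives $K_{Y_\varepsilon Y_\varepsilon}(t,s)\le\tilde K_{C(\varepsilon),\gamma}(t,s)+C_0$ for some $C(\varepsilon)$, with a \emph{single} additive constant $C_0$ independent of $\varepsilon$. Then la Vall\'ee Poussin furnishes a convex $f$ witnessing uniform integrability of the reference family, and Kahane's inequality (Theorem~\ref{thm:KahaneInequality}) transfers this $f$-bound to $\{M_{Y_\varepsilon}[\mathcal T]\}$. The point is that the reference family is already indexed by its own cutoff parameter $C$, so there is no regularization to match; you only need to choose $C(\varepsilon)$ large enough, and uniformity comes for free from uniform integrability of the reference family. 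Your $L^p$ route can be made to work too (the reference GMCs do have bounded $p$-th moments for $1<p<2d/\gamma^2$), but the paper's argument avoids the scale-matching headache you identified.
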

Note that in this case $K_{Y_{\varepsilon}Y_{\varepsilon}}$ is a
continuous kernel, so the Gaussian field $\left(\left\langle X,Y_{\varepsilon}\left(t\right)\right\rangle \right)_{t\in\mathcal{T}}$
has well-defined values at each $t\in\mathcal{T}$. The field $\left\langle X,Y_{\varepsilon}\left(\cdot\right)\right\rangle $
is obtained by convolving our generalized field $\left\langle X,Y\left(\cdot\right)\right\rangle $
with $\psi_{\varepsilon}$, which makes sense, since $\psi_{\varepsilon}$
is allowed as a test function.

We will see that Theorem \ref{thm:ApproximationLog} follows from
Theorem \ref{thm:Approximation} once we have a way of verifying the
uniform integrability assumption of the latter. This is done using
a known result on existence of GMC for specific logarithmic kernels
and Kahane's comparison inequality \cite{KahaneSur}.
\begin{thm}
[See \cite{KahaneSur}] Consider the following kernels on $\mathcal{T}\times\mathcal{T}$:
\[
\tilde{K}_{C,\gamma}\left(t,s\right):=\gamma^{2}\intop_{1}^{C}e^{-u\norm{t-s}}\frac{du}{u}=\gamma^{2}\log\left(C\wedge\norm{t-s}^{-1}\right)+O\left(1\right).
\]
Then for $\gamma<\sqrt{2d}$ the family of GMCs with these kernels
is uniformly integrable. \label{thm:ExistenceClassical}
\end{thm}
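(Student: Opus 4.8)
The plan is to exploit the explicit structure of $\tilde{K}_{C,\gamma}$. The formula $\tilde{K}_{C,\gamma}(t,s)=\gamma^{2}\int_{1}^{C}e^{-u\norm{t-s}}\frac{du}{u}$ writes the kernel as a superposition over the scale $u$ of the kernels $\gamma^{2}e^{-u\norm{t-s}}$, each of which is pointwise nonnegative and positive definite on $\RR^{d}$ (by Schoenberg's theorem, since $\norm{t-s}$ is conditionally negative definite); this is Kahane's $\sigma$-positivity in continuous form. First I would realize all the fields $X_{C}$ simultaneously from a single white noise on $[1,\infty)\times\RR^{d}$, so that for $C'>C$ the increment $X_{C'}-X_{C}$ is a centered Gaussian field independent of $X_{C}$. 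The independence together with the Wick normalization then gives $\E[M_{C'}[A]\mid X_{C}]=M_{C}[A]$, so $(M_{C}[\mathcal{T}])_{C\ge1}$ is a nonnegative martingale and in particular converges almost surely. The assertion of the theorem is exactly that this convergence loses no mass, i.e. that the family is uniformly integrable.

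Since $\tilde{K}_{C,\gamma}(t,t)=\gamma^{2}\log C+O(1)$ is finite for finite $C$, each $M_{C}$ is the elementary GMC of Example \ref{ex:TrivialGMC} and existence is not in question; I only need uniform integrability, and for that it suffices to bound the family in $L^{p}$ for a single $p>1$. Because $\gamma<\sqrt{2d}$ we have $2d/\gamma^{2}>1$, so I may fix $p\in(1,\,2d/\gamma^{2})$. When $\gamma^{2}<d$ this is immediate: $\E(M_{C}[\mathcal{T}])^{2}=\iint e^{\tilde{K}_{C,\gamma}(t,s)}\,dt\,ds\le\const\iint\norm{t-s}^{-\gamma^{2}}\,dt\,ds<\infty$ uniformly in $C$, so $p=2$ works. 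The real content is the range $d\le\gamma^{2}<2d$, where the second moment diverges and genuinely fractional exponents $p\in(1,2)$ must be used.

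For these I would use the approximate self-similarity of $\tilde{K}_{C,\gamma}=\gamma^{2}\log^{+}(C\wedge\norm{t-s}^{-1})+O(1)$. Partitioning $\mathcal{T}$ into the $2^{d}$ subcubes of half the side length, the restriction of the field to a subcube splits, up to the bounded part of the kernel, into an independent coarse-scale Gaussian factor times a rescaled independent copy of the whole field, so that $M_{C}[\mathcal{T}]$ is distributed as $\sum_{i=1}^{2^{d}}W_{i}\,2^{-d}M_{C/2}^{(i)}[\mathcal{T}]$ with $W_{i}=e^{\gamma N_{i}-\frac{\gamma^{2}}{2}\log 2}$ and $\E W_{i}=1$. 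Setting $m_{p}(C):=\E(M_{C}[\mathcal{T}])^{p}$ and writing each summand as its mean plus a centered part, the mean part contributes $(\E M_{C}[\mathcal{T}])^{p}=|\mathcal{T}|^{p}$ and the centered parts are controlled by the von Bahr--Esseen inequality $\E|\sum_{i}Y_{i}|^{p}\le2\sum_{i}\E|Y_{i}|^{p}$ valid for $p\in(1,2)$. Since $\E W_{i}^{p}=2^{\frac{\gamma^{2}}{2}(p^{2}-p)}$, the self-consistent term carries the factor $\theta=2^{-(p-1)(d-\frac{\gamma^{2}}{2}p)}$, and one obtains a recursion $m_{p}(C)\le\theta\,m_{p}(C/2)+\const$. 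One checks $\theta<1$ precisely when $p<2d/\gamma^{2}$, so for the chosen $p$ the recursion closes and yields $\sup_{C}m_{p}(C)<\infty$; the bounded additive part of the kernel is absorbed throughout via Kahane's convexity (comparison) inequality applied to the convex functional $x\mapsto x^{p}$. This threshold agrees with the classical Kahane--Peyri\`ere moment bounds for multiplicative cascades, to which the problem can alternatively be reduced.

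The main obstacle is this fractional-moment recursion: making the self-similar decomposition across subcubes rigorous, handling the correlations between the coarse factors $W_{i}$ of neighbouring subcubes and the dyadic boundary effects that the schematic cascade picture hides, and routing the non-scale-invariant $O(1)$ term cleanly through Kahane's comparison inequality so that the von Bahr--Esseen estimate delivers a genuine contraction with constant $\theta<1$. Everything upstream of it --- the $\sigma$-positive decomposition, the martingale property, the reduction of uniform integrability to an $L^{p}$ bound, and the easy $L^{2}$ case --- is routine.
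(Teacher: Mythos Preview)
The paper does not give its own proof of this statement: it is quoted with the attribution ``[See \cite{KahaneSur}]'' and is used as a black box in the application to logarithmic kernels (Theorem~\ref{thm:ApproximationLog}). So there is no proof in the paper to compare against.

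Your sketch is essentially the classical argument, and its structure is sound: the $\sigma$-positive decomposition of $\tilde{K}_{C,\gamma}$ gives the martingale property, uniform integrability is reduced to an $L^{p}$ bound for some $p\in(1,2d/\gamma^{2})$, the $L^{2}$ case $\gamma^{2}<d$ is immediate, and the remaining range is handled by a scale recursion whose contraction factor $2^{-(p-1)(d-\gamma^{2}p/2)}$ is $<1$ exactly when $p<2d/\gamma^{2}$. You have also correctly identified where the real work lies: the summands in the subcube decomposition are neither independent nor exactly self-similar, so von~Bahr--Esseen does not apply directly, and the $O(1)$ deviation from pure scaling must be routed through Kahane's comparison inequality. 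In Kahane's original treatment these issues are bypassed by comparing first to an exact multiplicative cascade (where the recursion is clean) and then transferring the $L^{p}$ bound back via the convexity inequality; your outline is compatible with that route but, as you say yourself, the step from the schematic recursion to an honest contraction is the part that remains to be written out.
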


\begin{thm}
[See \cite{KahaneSur}] Let $\left(X,Y_{1}\right)$ and $\left(X,Y_{2}\right)$
be Gaussian fields on $\mathcal{T}$ with continuous (or, more generally,
trace class) covariance kernels $K_{1},K_{2}$. Assume that
\[
\forall t,s:K_{1}\left(t,s\right)\le K_{2}\left(t,s\right).
\]
Then for every convex function $f:\RR_{+}\to\RR_{+}$
\[
\E f\left(\intop\exp\left[\left\langle X,Y_{1}\left(t\right)\right\rangle -\frac{1}{2}K_{1}\left(t,t\right)\right]dt\right)\le\E f\left(\intop\exp\left[\left\langle X,Y_{2}\left(t\right)\right\rangle -\frac{1}{2}K_{2}\left(t,t\right)\right]dt\right).
\]
 \label{thm:KahaneInequality}\end{thm}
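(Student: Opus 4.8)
The plan is to prove the inequality by Gaussian interpolation between the two fields, reducing it to the pointwise domination $K_1\le K_2$ together with convexity of $f$. Both sides depend only on the marginal law of the field $\langle X,Y_i(\cdot)\rangle$, i.e.\ on the centered Gaussian field with covariance $K_i$; since the kernels are trace class these are genuine pointwise-defined fields, so I am free to realize them on any convenient probability space. I would therefore take two \emph{independent} centered Gaussian fields $G^{(1)},G^{(2)}$ on $\mathcal{T}$ with covariances $K_1,K_2$ and interpolate by
\[
G_\theta(t):=\sqrt{1-\theta}\,G^{(1)}(t)+\sqrt{\theta}\,G^{(2)}(t),\qquad\theta\in[0,1].
\]
Its covariance is $C_\theta(t,s)=(1-\theta)K_1(t,s)+\theta K_2(t,s)$, so $G_0$ has the law prescribed by $K_1$, $G_1$ that prescribed by $K_2$, and crucially $\dot C_\theta(t,s):=\partial_\theta C_\theta(t,s)=K_2(t,s)-K_1(t,s)\ge0$.

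Writing $W_\theta(t):=\exp[G_\theta(t)-\tfrac12 C_\theta(t,t)]$, $M_\theta:=\intop W_\theta(t)\,\mu(dt)$ and $F(\theta):=\E f(M_\theta)$, the goal becomes $F(0)\le F(1)$, for which it suffices that $F'(\theta)\ge0$. I would compute $F'$ via the Gaussian integration-by-parts (interpolation) formula, tracking two contributions. The explicit dependence of the normalization $-\tfrac12 C_\theta(t,t)$ on $\theta$ contributes $-\tfrac12\intop\dot C_\theta(t,t)\,\E[f'(M_\theta)W_\theta(t)]\,\mu(dt)$. The dependence of the Gaussian \emph{law} on $\theta$ contributes, by the interpolation formula applied to the frozen functional $\Psi(x)=f(\intop e^{x(t)-\frac12 C_\theta(t,t)}\mu(dt))$, the term $\tfrac12\iint\dot C_\theta(t,s)\,\E\bigl[\tfrac{\partial^{2}\Psi}{\partial x(t)\,\partial x(s)}\bigr]\,\mu(dt)\mu(ds)$. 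The Hessian of $\Psi$ splits as $f''(M_\theta)W_\theta(t)W_\theta(s)$ off the diagonal plus a diagonal piece $f'(M_\theta)W_\theta(t)\,\delta(t-s)$, and the point is that this diagonal piece integrates to $+\tfrac12\intop\dot C_\theta(t,t)\E[f'(M_\theta)W_\theta(t)]\mu(dt)$, exactly cancelling the normalization term. What remains is
\[
F'(\theta)=\frac12\iint\bigl(K_2(t,s)-K_1(t,s)\bigr)\,\E\bigl[f''(M_\theta)\,W_\theta(t)\,W_\theta(s)\bigr]\,\mu(dt)\,\mu(ds),
\]
and since $K_2-K_1\ge0$, $W_\theta\ge0$, and $f''\ge0$ by convexity, the integrand is nonnegative; hence $F'\ge0$ and $F(0)\le F(1)$.

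The main obstacle is rigor, and it sits in two places. First, the diagonal $\delta(t-s)$ above is only formal in the continuum: I would make it precise by first proving the identity for fields constant on the cells of a finite measurable partition of $\mathcal{T}$ --- i.e.\ for finite-dimensional Gaussian vectors, where the interpolation formula is the elementary finite-dimensional Gaussian integration-by-parts identity and the ``diagonal'' is an honest finite sum --- and then passing to the general trace-class case by approximating $Y_1,Y_2$ by such step fields and using continuity of $M_\theta$ and of the kernels under this approximation. The cancellation of the normalization against the diagonal is robust under the discretization, so no genuine $\delta$-function is ever needed. Second, $f$ is merely convex rather than smooth, and $M_\theta$ need not have high moments; I would reduce to smooth convex $f$ with bounded second derivative by the standard device of writing a convex function as an increasing limit of smooth convex functions (or as a supremum of its affine minorants), and control the interchange of $\tfrac{d}{d\theta}$ with $\E$ using the uniform bound $\E M_\theta=\mu[\mathcal{T}]$ (from $\E W_\theta(t)=1$) together with monotone convergence. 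If either side of the inequality is infinite the statement is vacuous.
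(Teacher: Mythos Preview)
The paper does not supply its own proof of this statement: it is quoted from Kahane's original work \cite{KahaneSur} and used as a black box in the proof of Theorem~\ref{thm:ApproximationLog}. Your interpolation argument is precisely Kahane's original proof, and it is correct: interpolate the covariances linearly via independent copies, differentiate $\theta\mapsto\E f(M_\theta)$ using Gaussian integration by parts, observe that the diagonal term cancels the $\theta$-dependence of the normalization, and conclude from $K_2-K_1\ge0$ and $f''\ge0$. The two technical caveats you raise (discretize to finite dimensions first; reduce to smooth convex $f$ with bounded derivatives) are the standard way to make the computation rigorous. One small point worth tightening: the derivative $\partial_\theta G_\theta$ has variance blowing up at $\theta\in\{0,1\}$, so rather than justifying $F'(\theta)$ at the endpoints you should argue $F'\ge0$ on $(0,1)$ and pass to the endpoints by continuity of $\theta\mapsto\Law M_\theta$ together with your truncation of $f$.
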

\begin{proof}
[Proof of Theorem \ref{thm:ApproximationLog}] All assumptions of
Theorem \ref{thm:Approximation} except for the uniform integrability
are quite trivial to check.

The convergence assumption $Y_{\varepsilon}\to Y$ amounts to the
following. For every $\xi\in H$ we associate the function $\xi\left(t\right):=\left\langle \xi,Y\left(t\right)\right\rangle $
which belongs to the Cameron-Martin space of the field (equivalently,
the reproducing kernel Hilbert space associated to $K$). The assumption
$Y_{\varepsilon}\to Y$ is equivalent to 
\[
\xi\left(\cdot\right)\ast\psi_{\varepsilon}\to\xi\left(\cdot\right)
\]
for every such function, which is trivial, since $\xi\left(\cdot\right)\in L^{2}$.
The condition $K_{\varepsilon}\to K$ in $L^{0}\left(\mu\otimes\mu\right)$
is also trivially satisfied.

To verify uniform integrability we use Kahane's comparison inequality
(Theorem \ref{thm:KahaneInequality}) and the reference family of
kernels $K_{C,\gamma}$ for which uniform integrability is known.
It follows from \eqref{eq:BoundOnK} that there exists a constant
$C_{0}$, such that for every $\varepsilon>0$ there exists $C\left(\varepsilon\right)$,
such that
\[
\forall t,s:K_{\varepsilon}\left(t,s\right)\le\tilde{K}_{C\left(\varepsilon\right),\gamma}\left(t,s\right)+C_{0},
\]
where $\gamma=\sqrt{2d-\delta}$. Now by Theorem \ref{thm:ExistenceClassical}
GMCs with kernels $\tilde{K}_{C\left(\varepsilon\right),\gamma}\left(t,s\right)$,
and thus also $\tilde{K}_{C\left(\varepsilon\right),\gamma}\left(t,s\right)+C_{0}$,
are uniformly integrable. Therefore, by la Vallée Poussin's theorem
and Kahane's inequality (Theorem \ref{thm:KahaneInequality}), GMCs
with kernels $K_{\varepsilon}\left(t,s\right)$ are also uniformly
integrable. Therefore, all the assumptions of Theorem \ref{thm:Approximation}
are verified, and we have convergence $M_{\varepsilon}\overset{L^{0}}{\to}M$.
That $M$ does not depend on the approximation follows from our uniqueness
result (Corollary \ref{cor:Uniqueness}).
\end{proof}

\section{Proof of Theorem \ref{thm:GMCShifts} \label{sec:GMCShifts}}
\begin{proof}
Assume first that there exists a subcritical GMC $M$.

Define a measure $\Q$ on $\OMEGA\times\mathcal{T}$ by \eqref{eq:DefQ}.
We are going to prove \eqref{eq:QShift} by computing the conditional
Laplace transform of $X$ given $t$ under the measure $\Q$.

Let $\xi\in H$ and let $\varphi$ be an arbitrary positive measurable
function on $\mathcal{T}$ (defined $\mu$-almost everywhere). Then

\begin{equation}
\begin{aligned}\E_{\Q}\varphi\left(t\right)\exp\left\langle \xi,X\right\rangle  & \underset{\left(1\right)}{=}\E\intop\varphi\left(t\right)\exp\left\langle \xi,X\right\rangle M\left(X,dt\right)\\
 & \underset{\left(2\right)}{=}\exp\frac{1}{2}\norm{\xi}^{2}\cdot\E\intop\varphi\left(t\right)M\left(X+\xi,dt\right)\\
 & \underset{\left(3\right)}{=}\exp\frac{1}{2}\norm{\xi}^{2}\cdot\E\intop\varphi\left(t\right)\exp\left\langle \xi,Y\left(t\right)\right\rangle M\left(X,dt\right)\\
 & =\E\exp\left\langle \xi,X\right\rangle \cdot\E_{\mu}\varphi\left(t\right)\exp\left\langle \xi,Y\left(t\right)\right\rangle \\
 & =\E_{\P\otimes\mu}\varphi\left(t\right)\exp\left\langle \xi,X+Y\left(t\right)\right\rangle .
\end{aligned}
\label{eq:BunchOfEqualities}
\end{equation}
``$\underset{\left(1\right)}{=}$'' follows from the definition
of $\Q$, ``$\underset{\left(2\right)}{=}$'' is an application
of the Cameron-Martin theorem \cite[Theorem 14.1]{Janson} to the
shift $\xi$, and ``$\underset{\left(3\right)}{=}$'' is the definition
of GMC. The equality of the left-hand side and the right-hand side
of \eqref{eq:BunchOfEqualities} for all $\xi$ and $\varphi$ implies
\eqref{eq:QShift}, since it amounts to equality of conditional Laplace
transforms of $X$ conditioned on $t$, together with the tautology
$\Law_{\Q}t=\Law_{\P\otimes\mu}t=\mu$.

Note that if $M$ exists then \eqref{eq:QShift} implies that $Y$
is a randomized shift. Indeed, $\Law_{\P\otimes\mu}\left[X+Y\right]\ll\Law X$
with density $M\left[\mathcal{T}\right]$.

Conversely, assume that $Y$ is a randomized shift. Then define a
measure $\Q^{\prime}$ on $\OMEGA\times\mathcal{T}$ equipped with
the $\sigma$-algebra $\sigma\left(X,t\right)$ by
\[
\Law_{\Q^{\prime}}\left[X,t\right]:=\Law_{\P\otimes\mu}\left[X+Y\left(t\right),t\right].
\]
The absolute continuity property in the definition of randomized shift
amounts to saying that the $\OMEGA$-projection of $\Q^{\prime}$
is absolutely continuous with respect to $\P$ on $\sigma\left(X\right)$,
so, in particular, one can define a random measure $M^{\prime}\left(X,dt\right)$
via disintegration:
\[
\P\left(d\omega\right)M^{\prime}\left(X,dt\right):=\Q^{\prime}\left(d\omega,dt\right)
\]
(this all happens on $\left(\OMEGA\times\mathcal{T},\sigma\left(X,t\right)\right)$,
so $M^{\prime}$ is automatically measurable with respect to $X$).
$\E M^{\prime}$ is the $\mathcal{T}$-projection of $\Q^{\prime}$,
so $\E M^{\prime}=\mu$ is $\sigma$-finite.

To check that $M^{\prime}$ is a GMC, introduce a measure class preserving
action $S_{\xi},\xi\in H$ of the additive group of $H$ on $\left(\OMEGA\times\mathcal{T},\sigma\left(X,t\right),\Q^{\prime}\right)$
by
\[
S_{\xi}\left(X,t\right):=\left(X+\xi,t\right).
\]
Since it really acts only on $X$ and does not change $t$, there
is also an action $X\mapsto X+\xi$ on $\left(\OMEGA,\sigma\left(X\right),\P\right)$,
which we also denote by $S_{\xi}$.

$S_{\xi}$ is measure class preserving ($\Q^{\prime}$), since it
preserves the measure class of almost all fibers in the disintegration
of $\Q^{\prime}$ with respect to $t$. Indeed, this amounts to saying
that $\Lawcm{\P\otimes\mu}{X+Y\left(t\right)+\xi}t\ll\Lawcm{\P\otimes\mu}{X+Y\left(t\right)}t$
for $\mu$-almost all $t$, which is obvious from the Cameron-Martin
theorem. Moreover, the same argument gives an expression for the density:
\begin{equation}
\frac{\left(S_{\xi}\right)_{\ast}\Q^{\prime}\left(d\omega,dt\right)}{\Q^{\prime}\left(d\omega,dt\right)}=\exp\left[\left\langle X\left(\omega\right)-Y\left(t\right),\xi\right\rangle -\frac{1}{2}\norm{\xi}^{2}\right]\label{eq:Density1}
\end{equation}
($\left(S_{\xi}\right)_{\ast}\Q^{\prime}$ is the pushforward of $\Q^{\prime}$
by the map $S_{\xi}$, i.e. $\left(S_{\xi}\right)_{\ast}\Q^{\prime}\left[A\right]=\Q^{\prime}\left[S_{\xi}^{-1}\left[A\right]\right]$
for measurable sets $A\in\sigma\left(X,t\right)$).

Now we compute the very same density in a different way, by disintegrating
$\Q^{\prime}$ with respect to $X$ instead of $t$. This shows how
$M^{\prime}$ behaves with respect to shifts:
\begin{equation}
\begin{aligned}\frac{\left(S_{\xi}\right)_{\ast}\Q^{\prime}\left(d\omega,dt\right)}{\Q^{\prime}\left(d\omega,dt\right)} & =\frac{\left(S_{\xi}\right)_{\ast}\left[\P\left(d\omega\right)M^{\prime}\left(X\left(\omega\right),dt\right)\right]}{\P\left(d\omega\right)M^{\prime}\left(X\left(\omega\right),dt\right)}\\
 & =\frac{\left(S_{\xi}\right)_{\ast}\P\left(d\omega\right)}{\P\left(d\omega\right)}\cdot\frac{M^{\prime}\left(X\left(\omega\right)-\xi,dt\right)}{M^{\prime}\left(X\left(\omega\right),dt\right)}\\
 & =\exp\left[\left\langle X\left(\omega\right),\xi\right\rangle -\frac{1}{2}\norm{\xi}^{2}\right]\cdot\frac{M^{\prime}\left(X\left(\omega\right)-\xi,dt\right)}{M^{\prime}\left(X\left(\omega\right),dt\right)}.
\end{aligned}
\label{eq:Density2}
\end{equation}
By comparing \eqref{eq:Density1} to \eqref{eq:Density2} we see that
$M^{\prime}$ is a GMC.
\end{proof}

\section{Proof of Theorem \ref{thm:HilbertSchmidtMoments} and Corollary \ref{cor:Moments}
\label{sec:KernelRegularity}}

The following basic observation will be useful in the proof:
\begin{lem}
Let $M_{Y}$ be the subcritical GMC associated to $Y$. Then there
exists a subcritical GMC associated to $cY$ for any $\left|c\right|\le1$,
namely,
\[
M_{cY}\left(X\right):=\Ec{M_{Y}\left(cX+\left(1-c^{2}\right)^{1/2}X^{\prime}\right)}X
\]
where $X^{\prime}$ is an independent copy of $X$. \label{lem:CY}\end{lem}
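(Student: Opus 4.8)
The plan is to recognize the stated formula as an Ornstein--Uhlenbeck ``rotation'' of $M_{Y}$ and to verify the three clauses of Definition \ref{def:GMC} one at a time. First I would realize $X$ and its independent copy $X^{\prime}$ on a common product space and set $Z:=cX+\left(1-c^{2}\right)^{1/2}X^{\prime}$. The condition $\left|c\right|\le1$ enters precisely here, to keep $\left(1-c^{2}\right)^{1/2}$ real; and with it $\E\left\langle Z,\xi\right\rangle ^{2}=\left(c^{2}+\left(1-c^{2}\right)\right)\norm{\xi}^{2}=\norm{\xi}^{2}$ for all $\xi\in H$, so $Z$ is again a standard Gaussian in $H$. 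Consequently $M_{Y}\left(Z\right)$ is a subcritical GMC over $\left(Z,Y\right)$, and I define $M_{cY}\left(X\right):=\Ec{M_{Y}\left(Z\right)}{X}$, the conditional expectation that integrates out $X^{\prime}$.

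Two of the three clauses are then immediate. By construction $M_{cY}$ is measurable with respect to $X$, which is clause (2). For clause (1) and subcriticality, the tower property gives $\E M_{cY}=\E M_{Y}\left(Z\right)$, and since $Z\overset{d}{=}X$ we have $\E M_{Y}\left(Z\right)=\E M_{Y}=\mu$; hence $\E M_{cY}=\mu$ is $\sigma$-finite and $\mu$-absolutely continuous. (If $\mu$ is infinite I would first reduce to the finite case via Remark \ref{rem:EM}, so that $M_{cY}$ is a genuine finite random measure.)

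The content is clause (3), the shift covariance. I would apply the defining property \eqref{eq:DefGMC} of $M_{Y}$ with the deterministic shift $c\xi\in H$, namely $M_{Y}\left(W+c\xi,dt\right)=e^{\left\langle Y\left(t\right),c\xi\right\rangle }M_{Y}\left(W,dt\right)$ for $\Law W$-almost every standard Gaussian $W$, and specialize it to $W=Z$; this is legitimate because $Z$ has the standard Gaussian law, so the exceptional set pulls back to a $\P\otimes\P$-null set. Writing $Z+c\xi=c\left(X+\xi\right)+\left(1-c^{2}\right)^{1/2}X^{\prime}$, I get
\[
M_{Y}\left(c\left(X+\xi\right)+\left(1-c^{2}\right)^{1/2}X^{\prime},dt\right)=e^{c\left\langle Y\left(t\right),\xi\right\rangle }M_{Y}\left(Z,dt\right)
\]
for $\P\otimes\P$-almost all $\left(X,X^{\prime}\right)$. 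Taking $\Ec{\cdot}{X}$ of both sides and noting that $e^{c\left\langle Y\left(t\right),\xi\right\rangle }$ depends only on $t$, hence factors out of the $X^{\prime}$-integration, I obtain $M_{cY}\left(X+\xi,dt\right)=e^{\left\langle cY\left(t\right),\xi\right\rangle }M_{cY}\left(X,dt\right)$, which is exactly \eqref{eq:DefGMC} for the field $\left(X,cY\right)$.

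The main obstacle is the bookkeeping in this last step: one must check that $\Ec{M_{Y}\left(c\left(X+\xi\right)+\left(1-c^{2}\right)^{1/2}X^{\prime}\right)}{X}$ is genuinely $M_{cY}$ evaluated at the shifted argument $X+\xi$. Here I would invoke the Cameron--Martin theorem: $M_{cY}$ is defined only $\Law X$-almost everywhere as a functional of $X$, but since $\Law\left(X+\xi\right)\ll\Law X$ the evaluation at $X+\xi$ is legitimate almost surely, and the shift $X\mapsto X+\xi$ commutes with the $X^{\prime}$-integration because the latter leaves the $X$-coordinate untouched. The remaining points --- Tonelli to define the fibrewise integral of the positive measure-valued $M_{Y}$, and measurability of the resulting kernel --- are routine.
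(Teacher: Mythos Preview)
Your proof is correct and follows exactly the same approach as the paper: the key observation that a shift of $X$ by $\xi$ amounts to a shift of $cX+\left(1-c^{2}\right)^{1/2}X^{\prime}$ by $c\xi$, so the GMC property for $M_{cY}$ follows from that of $M_{Y}$. The paper compresses this to a single sentence, whereas you have carefully spelled out the verification of all three clauses of Definition~\ref{def:GMC} and the Cameron--Martin bookkeeping, but the substance is identical.
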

\begin{proof}
Shifting $X$ by $\xi\in H$ in the left-hand side amounts to shifting
the argument $cX+\left(1-c^{2}\right)^{1/2}X^{\prime}$ in the right-hand
side by $c\xi$, so that the GMC property for $M_{cY}$ follows trivially
from the one for $M_{Y}$.
\end{proof}

\begin{proof}
[Proof of Theorem \ref{thm:HilbertSchmidtMoments}] Fix $n$, and
consider a Hilbert-Schmidt symmetric tensor $\lambda\in H^{\otimes n}$.
Denote the Wick polynomial corresponding to $\lambda$ by 
\[
P_{\lambda}\left(X\right):=\Wick{\left\langle \lambda,X^{\otimes n}\right\rangle }.
\]
It is well-known (see, e.g., \cite{Janson}) that $\E\left|P_{\lambda}\left(X\right)\right|^{2}=n!\norm{\lambda}_{2}^{2}$,
so in particular the family of random variables 
\[
\left\{ P_{\lambda}\left(X\right),\norm{\lambda}_{2}\le1\right\} 
\]
is bounded in probability. Now by Lemma \ref{lem:CY} we know that
for every $\left|c\right|\le1$ the measure $\Law_{\P\otimes\mu}\left[X+cY\right]$
is absolutely continuous with respect to $\Law X$. Therefore for
fixed $c$ the family of random variables 
\[
\left\{ P_{\lambda}\left(X+cY\right)\,\middle|\,\norm{\lambda}_{2}\le1\right\} 
\]
is bounded in probability. Since $c\mapsto P_{\lambda}\left(X+cY\right)$
is an $n$-th degree polynomial, we can extract its $n$-th degree
coefficient in $c$ (denoted by $\left[c^{n}\right]P_{\lambda}\left(X+cY\right)$)
by taking an appropriate linear combination of its values at different
$c$. For instance, we can use the $n$-th iterated difference with
step $\frac{1}{n}$: 
\begin{equation}
\left[c^{n}\right]P_{\lambda}\left(X+cY\right)=\frac{n^{n}}{n!}\sum_{k=0}^{n}{n \choose k}\left(-1\right)^{n-k}P_{\lambda}\left(X+\frac{k}{n}Y\right).\label{eq:nCoeff}
\end{equation}

Next we argue that the random variables $\left\langle \lambda,Y^{\otimes n}\right\rangle $
are well-defined and continuously depend on $\lambda\in H^{\otimes n}$.
To this end note that they are well-defined a priori for $\lambda$
of finite rank (i.e. $\lambda\in H_{\op{alg}}^{\otimes n}$). Also
note that the Wick product and the ordinary product only differ in
lower degree terms, therefore for any finite rank $\lambda$ we have,
\[
\left\langle \lambda,Y^{\otimes n}\right\rangle =\left[c^{n}\right]P_{\lambda}\left(X+cY\right).
\]
Now for fixed $n$, by \eqref{eq:nCoeff}, this family of random variables
is bounded in probability as $\norm{\lambda}_{2}\le1,\rank\lambda<\infty$.
This implies that the $L^{0}$-valued operator $Y^{\otimes n}$, defined
on finite rank tensors, is bounded in the Hilbert-Schmidt norm, so
it can be extended by continuity to an operator defined on all Hilbert-Schmidt
tensors $\lambda$. Therefore, by Theorem \ref{thm:Factorization},
for some measures $\mu_{n}^{\prime}$ equivalent to $\mu$ all linear
functionals of $Y^{\otimes n}$ have second (therefore, first) moments,
and $\norm{\E_{\mu_{n}^{\prime}}Y^{\otimes n}}_{2}<\infty$.
\end{proof}

\begin{proof}
[Proof of Corollary \ref{cor:Moments}] If $Y$ is an $H$-valued
function, we have:
\[
\begin{aligned}\intop\left(K\left(t,s\right)\right)^{n}\mu_{n}^{\prime}\left(dt\right)\mu_{n}^{\prime}\left(ds\right) & =\intop\left\langle Y\left(t\right),Y\left(s\right)\right\rangle ^{n}\mu_{n}^{\prime}\left(dt\right)\mu_{n}^{\prime}\left(ds\right)\\
 & =\intop\left\langle \left(Y\left(t\right)\right)^{\otimes n},\left(Y\left(s\right)\right)^{\otimes n}\right\rangle \mu_{n}^{\prime}\left(dt\right)\mu_{n}^{\prime}\left(ds\right)\\
 & =\norm{\E_{\mu_{n}^{\prime}}Y^{\otimes n}}_{2}^{2}
\end{aligned}
\]
which is finite for the measure $\mu_{n}^{\prime}$ constructed in
Theorem \ref{thm:HilbertSchmidtMoments}. Below we extend this computation
to the case where $Y$ is a generalized random vector.

Fix an orthonormal basis $\left(e_{i}\right)$ in $H$, and denote
by $Y_{i}$ the $i$-th coordinate of $Y$ in this basis, i.e. $Y_{i}=\left\langle Y,e_{i}\right\rangle $.
Fix also a basis $\left(\varepsilon_{p}\right)$ in $L^{2}\left(\mu_{n}^{\prime}\right)$.
Then the Hilbert-Schmidt norm of $K$ may be rewritten as
\[
\begin{aligned}\sum_{p,q}\left\langle \E_{\mu_{n}^{\prime}}\varepsilon_{p}Y,\E_{\mu_{n}^{\prime}}\varepsilon_{q}Y\right\rangle _{H}^{2} & =\sum_{p,q}\left(\sum_{i}\E_{\mu_{n}^{\prime}}\varepsilon_{p}Y_{i}\cdot\E_{\mu_{n}^{\prime}}\varepsilon_{q}Y_{i}\right)^{2}\\
 & =\sum_{p,q}\sum_{i,j}\E_{\mu_{n}^{\prime}}\varepsilon_{p}Y_{i}\cdot\E_{\mu_{n}^{\prime}}\varepsilon_{q}Y_{i}\cdot\E_{\mu_{n}^{\prime}}\varepsilon_{p}Y_{j}\cdot\E_{\mu_{n}^{\prime}}\varepsilon_{q}Y_{j}\\
 & =\sum_{i,j}\left(\sum_{p}\E_{\mu_{n}^{\prime}}\varepsilon_{p}Y_{i}\cdot\E_{\mu_{n}^{\prime}}\varepsilon_{p}Y_{j}\right)^{2}\\
 & =\sum_{i,j}\left(\E_{\mu_{n}^{\prime}}Y_{i}Y_{j}\right)^{2}=\norm{\E_{\mu_{n}^{\prime}}Y^{\otimes2}}_{2}^{2},
\end{aligned}
\]
which is finite by Theorem \ref{thm:HilbertSchmidtMoments}.

It follows by standard theory that
\[
\norm{\intop f\left(t\right)Y\left(t\right)\mu_{n}^{\prime}\left(dt\right)}^{2}=\intop K\left(t,s\right)f\left(t\right)f\left(s\right)\mu_{n}^{\prime}\left(dt\right)\mu_{n}^{\prime}\left(ds\right)
\]
for some $K\in L^{2}\left(\mu_{n}^{\prime}\otimes\mu_{n}^{\prime}\right)$.
Now that $K$ is actually a function, we may express it as the $L^{2}\left(\mu_{n}^{\prime}\otimes\mu_{n}^{\prime}\right)$-convergent
sum
\[
K\left(t,s\right)=\sum_{i}Y_{i}\left(t\right)Y_{i}\left(s\right).
\]
Without limitation of generality we may assume that $n$ is even.
By Fatou's lemma,
\[
\begin{aligned}\intop\left(K\left(t,s\right)\right)^{n}\mu_{n}^{\prime}\left(dt\right)\mu_{n}^{\prime}\left(ds\right) & \le\liminf_{N\to\infty}\intop\left(\sum_{i\le N}Y_{i}\left(t\right)Y_{i}\left(s\right)\right)^{n}\mu_{n}^{\prime}\left(dt\right)\mu_{n}^{\prime}\left(ds\right)\\
 & =\liminf_{N\to\infty}\sum_{i_{1}\le N}\dots\sum_{i_{n}\le N}\left(\E_{\mu_{n}^{\prime}}Y_{i_{1}}\dots Y_{i_{n}}\right)^{2}\\
 & =\norm{\E_{\mu_{n}^{\prime}}Y^{\otimes n}}_{2}^{2},
\end{aligned}
\]
which is finite by Theorem \ref{thm:HilbertSchmidtMoments}.
\end{proof}

\section{Proof of Theorem \ref{thm:Approximation} \label{sec:Approximation}}

\subsection{Outline}

The idea of the proof of Theorem \ref{thm:Approximation} can be outlined
as follows.

By a compactness argument (Lemma \ref{lem:Compactness}) we extract
a subsequence $\left(n^{\prime}\right)$, such the couple $\left(X,M_{Y_{n^{\prime}}}\right)$
converges in law to $\left(X,M\right)$ for some random measure $M$,
defined on an extended probability space and not necessarily measurable
with respect to $X$. By a general argument (Lemma \ref{lem:MeasurabilityImpliesConvergence})
in order for $M_{Y_{n}}$ to converge to $M$ in probability rather
than just in law it is enough to show that $M$ is measurable with
respect to $X$.

On the other hand, we show in Lemma \ref{lem:CoupledGMC} that the
coupling of the limiting measure $M$ to the Gaussian $X$ exhibits
the following property: for every $\xi\in H$ the distribution of
$\left(X+\xi,e^{\left\langle \xi,Y\right\rangle }M\right)$ is absolutely
continuous with respect to that of $\left(X,M\right)$ with the usual
Cameron-Martin density $e^{\left\langle \cdot,\xi\right\rangle -\frac{1}{2}\norm{\xi}^{2}}$
--- in particular, for any bounded continuous function $G:\RR_{+}\to\RR_{+}$
\begin{equation}
\E e^{\left\langle X,\xi\right\rangle -\frac{1}{2}\norm{\xi}^{2}}G\left(M\left[\mathcal{T}\right]\right)=\E G\left(\intop e^{\left\langle \xi,Y\left(s\right)\right\rangle }M\left(ds\right)\right).\label{eq:GtrickXi}
\end{equation}
This property implies, in particular, that $\Ec MX=:M_{Y}$ is a subcritical
GMC associated to $Y$, which is, therefore, a randomized shift.

The heart of the proof consists in establishing the measurability
of $M$ with respect to $X$, or, in other words, that $M=\Ec MX=M_{Y}$.
This is done by replacing the deterministic shift $\xi$ in \eqref{eq:GtrickXi}
by the randomized shift $Y$, in two ways:
\begin{equation}
\begin{aligned}\E M\left[\mathcal{T}\right]G\left(M\left[\mathcal{T}\right]\right) & \underset{(1)}{=}\E\intop\mu\left(dt\right)G\left(\intop e^{\left\langle Y\left(t\right),Y\left(s\right)\right\rangle }M\left(ds\right)\right)\\
 & \underset{(2)}{\le}\E M_{Y}\left[\mathcal{T}\right]G\left(M\left[\mathcal{T}\right]\right).
\end{aligned}
\label{eq:GtrickY}
\end{equation}
This is the content of Claim \ref{cl:Gtrick}. On the one hand, ``$\underset{(1)}{=}$''
is a property of the law of $M$ alone, and it is proved by passing
to the limit the corresponding property for the approximating GMCs
$M_{Y_{n}}$ (Lemma \ref{lem:ExpK}). On the other hand, ``$\underset{(2)}{\le}$''
is a property of the coupling $\left(X,M\right)$ that follows by
randomizing $\xi$ in \eqref{eq:GtrickXi}.

Finally, we apply \eqref{eq:GtrickY} with $G\left(x\right):=\frac{x}{1+x}$.
It follows that $M\left[\mathcal{T}\right]=M_{Y}\left[\mathcal{T}\right]$
by simple manipulations with Jensen's inequality, using the fact that
$G$ is strictly concave and $x\mapsto xG\left(x\right)$ is strictly
convex.

The rest of the section is structured as follows. In Section \ref{sub:Body}
we state explicitly the Lemmas alluded to above and show how Theorem
\ref{thm:Approximation} follows from them. In Section \ref{sub:ExpK}
we prove the central technical Lemma \ref{lem:ExpK}, which allows
to replace deterministic shifts by randomized shifts in the definition
of GMC and is crucial to the proof of \eqref{eq:GtrickY} above. In
Section \ref{sub:USAC} we introduce the notion of uniform stochastic
absolute continuity that is used in the approximation procedures.
In Sections \ref{sub:CoupledGMC}-\ref{sub:Basic} we prove all the
lemmas announced in Section \ref{sub:Body}.

After the proof, in Section \ref{sub:Remark} we briefly discuss another
use of Lemma \ref{lem:ExpK} and the related nonatomicity problem.

\subsection{Key lemmas and proof of Theorem \ref{thm:Approximation} \label{sub:Body}}

Throughout this section we use the notation and assumptions of Theorem
\ref{thm:Approximation}.

We begin with the basic distributional compactness result, Lemma \ref{lem:Compactness}.
The important assumptions for it are the boundedness of expectations
$\E M_{Y_{n}}=\mu$, which ensures that the sequence $\left\{ M_{Y_{n}}\left[\mathcal{T}\right]\right\} $
is tight, and the uniform integrability of $M_{Y_{n}}\left[\mathcal{T}\right]$
which implies that the limiting random measure also has expectation
$\mu$. Although in our setting the space $\mathcal{T}$ carries no
canonical topology, in the proof of Lemma \ref{lem:Compactness} we
reduce it to the classical weak compactness of the space of measures
on a compact metrizable space by introducing an auxiliary topology.
\begin{lem}
There exists a subsequence $\left(n^{\prime}\right)$, and a random
measure $M$ on $\left(\mathcal{T},\mu\right)$ with $\E M=\mu$,
possibly defined on an extended probability space, such that
\begin{equation}
\forall\xi\in H,\forall f\in L^{1}\left(\mu\right):\left(\left\langle X,\xi\right\rangle ,\intop f\left(t\right)M_{Y_{n^{\prime}}}\left(dt\right)\right)\overset{\Law}{\to}\left(\left\langle X,\xi\right\rangle ,\intop f\left(t\right)M\left(dt\right)\right).\label{eq:JointLimit}
\end{equation}
\label{lem:Compactness}
\end{lem}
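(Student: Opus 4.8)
The plan is to reduce the statement to classical weak convergence of random measures on a compact metric space. Since $\left(\mathcal{T},\mu\right)$ is a standard probability space, I would first choose a countable separating family of measurable maps $h_{k}:\mathcal{T}\to\left[0,1\right]$ and form the Borel map $\iota:\mathcal{T}\to\left[0,1\right]^{\NN}$, $\iota\left(t\right):=\left(h_{k}\left(t\right)\right)_{k}$, which is a Borel isomorphism onto its (necessarily Borel) image. Let $\bar{\mathcal{T}}:=\overline{\iota\left(\mathcal{T}\right)}$ be the closure in the Hilbert cube, a compact metrizable space, and transport everything by $\iota$: the pushforwards $\iota_{\ast}M_{Y_{n}}$ become random elements of the space $\mathcal{M}\left(\bar{\mathcal{T}}\right)$ of finite positive measures with the weak topology, which is Polish and in which bounded subsets are relatively compact.

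Next I would establish tightness and extract the subsequence. The total masses satisfy $\E\,\iota_{\ast}M_{Y_{n}}\left[\bar{\mathcal{T}}\right]=\E\,M_{Y_{n}}\left[\mathcal{T}\right]=\mu\left[\mathcal{T}\right]$, so they are bounded in $L^{1}$ and hence tight; by the standard criterion for random measures (tightness of the total masses) this makes $\left\{ \iota_{\ast}M_{Y_{n}}\right\} $ tight in $\mathcal{M}\left(\bar{\mathcal{T}}\right)$. Viewing $X$ through its coordinates in a fixed orthonormal basis as a random element of $\RR^{\NN}$, whose law is the fixed Gaussian and in particular tight, the joint laws of $\left(X,\iota_{\ast}M_{Y_{n}}\right)$ on the Polish product space are tight, and Prokhorov's theorem furnishes a subsequence $\left(n^{\prime}\right)$ along which $\left(X,\iota_{\ast}M_{Y_{n^{\prime}}}\right)\overset{\Law}{\to}\left(\hat{X},\bar{M}\right)$. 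Since the first marginal does not change along the sequence, $\hat{X}$ has the same law as $X$ in $\RR^{\NN}$, hence is again a standard Gaussian in $H$; I relabel it $X$, so that the extended probability space carries the coupling $\left(X,\bar{M}\right)$.

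The crucial point is that \emph{no mass is lost in the limit}, and this is exactly where uniform integrability enters. For any $g\in C\left(\bar{\mathcal{T}}\right)$ the functional $\nu\mapsto\intop g\,d\nu$ is weakly continuous, so $\intop g\,d\left(\iota_{\ast}M_{Y_{n^{\prime}}}\right)\to\intop g\,d\bar{M}$ in law; since $\left|\intop g\,d\left(\iota_{\ast}M_{Y_{n}}\right)\right|\le\norm{g}_{\infty}M_{Y_{n}}\left[\mathcal{T}\right]$ and $\left\{ M_{Y_{n}}\left[\mathcal{T}\right]\right\} $ is uniformly integrable, the expectations converge as well, giving $\E\intop g\,d\bar{M}=\intop g\circ\iota\,d\mu=\intop g\,d\left(\iota_{\ast}\mu\right)$. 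Hence $\E\bar{M}=\iota_{\ast}\mu$, which is concentrated on $\iota\left(\mathcal{T}\right)$; as $\bar{M}\ge0$ this forces $\bar{M}$ to live on $\iota\left(\mathcal{T}\right)$ almost surely, so it pulls back to a genuine random measure $M:=\left(\iota^{-1}\right)_{\ast}\bar{M}$ on $\mathcal{T}$ with $\E M=\mu$.

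It remains to upgrade this weak convergence to the stated form \eqref{eq:JointLimit}. The only friction is that the linear functional $x\mapsto\left\langle x,\xi\right\rangle =\sum_{i}\xi_{i}x_{i}$ is \emph{not} continuous on $\RR^{\NN}$. I would circumvent this by truncation: for the partial sums $\xi^{\left(m\right)}:=\sum_{i\le m}\xi_{i}e_{i}$ the map $\left(x,\nu\right)\mapsto\left(\left\langle x,\xi^{\left(m\right)}\right\rangle ,\intop g\,d\nu\right)$ is continuous, so the continuous mapping theorem applies, and then letting $m\to\infty$ I would use $\left\langle X,\xi^{\left(m\right)}\right\rangle \to\left\langle X,\xi\right\rangle $ in $L^{2}\left(\P\right)$ (and likewise for $\hat{X}$, whose coordinate law agrees with that of $X$) together with a bounded-Lipschitz-metric triangle inequality to pass to $\xi$. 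The same triangle-inequality argument, combined with the uniform bound $\E\left|\intop f\,dM_{Y_{n}}-\intop g\circ\iota\,dM_{Y_{n}}\right|\le\norm{f-g\circ\iota}_{L^{1}\left(\mu\right)}$ and the density of $\left\{ g\circ\iota:g\in C\left(\bar{\mathcal{T}}\right)\right\} $ in $L^{1}\left(\mu\right)$, extends the convergence from continuous test functions to arbitrary $f\in L^{1}\left(\mu\right)$. I expect the main obstacle to be the mass-preservation step: without uniform integrability the limit $\bar{M}$ could place mass on $\bar{\mathcal{T}}\setminus\iota\left(\mathcal{T}\right)$ and thus fail both to correspond to a measure on $\mathcal{T}$ and to have the correct expectation $\mu$.
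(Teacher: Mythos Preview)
Your proposal is correct and follows essentially the same strategy as the paper's proof: reduce to weak compactness of random measures on a compact metric space, invoke Prokhorov on the joint law with $X$ viewed in $\RR^{\infty}$, use uniform integrability to obtain $\E M=\mu$, and then extend \eqref{eq:JointLimit} from continuous $f$ and finite-rank $\xi$ to all of $L^{1}\left(\mu\right)\times H$ by approximation (the paper phrases this last step as an equicontinuity argument for the maps $\left(\xi,f\right)\mapsto\left(\left\langle X,\xi\right\rangle ,\intop f\,dM_{Y_{n^{\prime}}}\right)$). The only presentational difference is that the paper exploits standardness by taking $\mathcal{T}=\left[0,1\right]$ outright rather than compactifying via the Hilbert cube, which makes your mass-preservation step (ruling out escape to $\bar{\mathcal{T}}\setminus\iota\left(\mathcal{T}\right)$) unnecessary.
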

We abbreviate \eqref{eq:JointLimit} to ``$\left(X,M_{Y_{n^{\prime}}}\right)\overset{\Law}{\to}\left(X,M\right)$''.
In the sequel we restrict attention to a convergent subsequence without
further mention and reserve the letter $M$ for the limiting measure.

Next we reduce the convergence $M_{Y_{n^{\prime}}}\to M$ in probability
to the measurability of $M$ with respect to $X$.
\begin{lem}
Assume that $M$ is measurable with respect to $X$. Then $M_{Y_{n}}\overset{L^{1}}{\to}M$
in the sense that 
\[
\forall f\in L^{1}\left(\mu\right):\intop f\left(t\right)M_{Y_{n}}\left(dt\right)\overset{L^{1}}{\to}\intop f\left(t\right)M\left(dt\right).
\]
\label{lem:MeasurabilityImpliesConvergence}
\end{lem}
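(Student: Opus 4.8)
The plan is to upgrade the joint convergence in law supplied by Lemma \ref{lem:Compactness} to convergence in probability, and then to $L^{1}$, by exploiting the hypothesis that the limit $M$ is a function of $X$. Write $M=g\left(X\right)$ for a measurable $g$, identify $X$ with its coordinate sequence $\left(\left\langle X,e_{i}\right\rangle\right)\in\RR^{\infty}$ (a random element of a Polish space carrying the Gaussian law $\Law X$), and equip the space of finite measures on $\mathcal{T}$ with the weak topology coming from the auxiliary compact metrizable topology used in the proof of Lemma \ref{lem:Compactness}. For $f\in C\left(\mathcal{T}\right)$ set $Z_{n}^{f}:=\intop f\left(t\right)M_{Y_{n}}\left(dt\right)$ and $Z^{f}:=\intop f\left(t\right)M\left(dt\right)=h_{f}\left(X\right)$, the latter being $\sigma\left(X\right)$-measurable because $M$ is. Since $M\mapsto\intop f\,dM$ is weakly continuous for continuous $f$, Lemma \ref{lem:Compactness} gives $\left(X,Z_{n}^{f}\right)\overset{\Law}{\to}\left(X,Z^{f}\right)$. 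I will first prove $Z_{n}^{f}\to Z^{f}$ in $L^{1}$ for $f\in C\left(\mathcal{T}\right)$ and afterwards remove the continuity restriction.

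The heart of the argument is the passage from law to probability. The key observation is that, because the first marginal $\Law X$ is the \emph{same} for all $n$ and is a Borel measure on a Polish space, the convergence $\left(X,Z_{n}^{f}\right)\overset{\Law}{\to}\left(X,Z^{f}\right)$ extends from bounded continuous test functions $\left(x,z\right)\mapsto\Phi\left(x\right)\psi\left(z\right)$ to the case where $\Phi$ is merely bounded measurable (and $\psi$ bounded continuous): one approximates $\Phi$ in $L^{1}\left(\Law X\right)$ by bounded continuous functions and controls the error by $\norm{\psi}_{\infty}$ uniformly in $n$. Applying this with $\Phi\left(X\right):=\psi\left(Z^{f}\right)=\psi\left(h_{f}\left(X\right)\right)$, which is a bounded measurable function of $X$, yields $\E\psi\left(Z^{f}\right)\psi\left(Z_{n}^{f}\right)\to\E\psi\left(Z^{f}\right)^{2}$; taking $\Phi\equiv1$ and the bounded continuous test function $\psi^{2}$ gives $\E\psi\left(Z_{n}^{f}\right)^{2}\to\E\psi\left(Z^{f}\right)^{2}$. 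Combining these,
\[
\E\left(\psi\left(Z_{n}^{f}\right)-\psi\left(Z^{f}\right)\right)^{2}\to0,
\]
so $\psi\left(Z_{n}^{f}\right)\to\psi\left(Z^{f}\right)$ in probability for every bounded continuous $\psi$. Choosing $\psi=\arctan$, a homeomorphism of $\RR$ onto a bounded interval, I conclude $Z_{n}^{f}\to Z^{f}$ in probability.

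To turn this into $L^{1}$ convergence I invoke uniform integrability. For $f\in C\left(\mathcal{T}\right)$ with $\left|f\right|\le c$ we have $\left|Z_{n}^{f}\right|\le c\,M_{Y_{n}}\left[\mathcal{T}\right]$, so $\left\{Z_{n}^{f}\right\}_{n}$ is uniformly integrable by the standing hypothesis on $\left\{M_{Y_{n}}\left[\mathcal{T}\right]\right\}$; together with convergence in probability and $Z^{f}\in L^{1}$ (Vitali's theorem), this gives $Z_{n}^{f}\overset{L^{1}}{\to}Z^{f}$. Finally I remove the continuity restriction. Since $\E M_{Y_{n}}=\E M=\mu$, for any $f,f^{\prime}\in L^{1}\left(\mu\right)$ one has the uniform bound $\E\left|\intop f\,dM_{Y_{n}}-\intop f^{\prime}\,dM_{Y_{n}}\right|\le\norm{f-f^{\prime}}_{L^{1}\left(\mu\right)}$, and the analogue for $M$. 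As $C\left(\mathcal{T}\right)$ is dense in $L^{1}\left(\mu\right)$, a routine $3\varepsilon$-argument extends $Z_{n}^{f}\overset{L^{1}}{\to}Z^{f}$ to all $f\in L^{1}\left(\mu\right)$, which is the assertion of the lemma (for the convergent subsequence fixed after Lemma \ref{lem:Compactness}).

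The main obstacle is precisely this law-to-probability upgrade, and it is here that measurability of $M$ with respect to $X$ is indispensable: without it the subsequential limit $M$ need carry no deterministic relation to $X$, and $\left(X,Z_{n}^{f}\right)\overset{\Law}{\to}\left(X,Z^{f}\right)$ gives nothing beyond convergence in distribution. The enabling technical point is the enlargement of the admissible test functions in the $X$-variable from continuous to bounded measurable — a stable-convergence phenomenon available only because the first marginal $\Law X$ is held fixed along the sequence and lives on a Polish space; in particular one cannot apply the continuous mapping theorem directly to $\left(x,z\right)\mapsto z-g\left(x\right)$, since $g$ is merely measurable.
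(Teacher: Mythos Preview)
Your proof is correct, but it follows a different path from the paper's. The paper argues as follows: consider the sequence of \emph{triples} $\left(X,M_{Y_{n}},M\right)$ on the (possibly extended) probability space and extract a subsequential distributional limit $\left(X,\tilde M,M'\right)$ by tightness. Since the pair $\left(X,M_{Y_{n}}\right)$ already converges in law to $\left(X,M\right)$, the first two coordinates of the limit are $\left(X,M\right)$; since $\left(X,M\right)$ is constant along the sequence, the last two coordinates have law $\Law\left(X,M\right)$ as well. As $M$ is a fixed measurable function of $X$, both $\tilde M$ and $M'$ equal that function of $X$, forcing $\tilde M=M'$. Hence $\intop f\,dM_{Y_{n}}-\intop f\,dM\overset{\Law}{\to}0$, which is convergence in probability; uniform integrability then gives $L^{1}$.

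Your route instead exploits the stable-convergence mechanism directly: because the $X$-marginal is fixed, test functions in the $X$-variable can be upgraded from continuous to bounded measurable, which lets you plug in $\psi\left(Z^{f}\right)$ and run an $L^{2}$ expansion. Both arguments hinge on the same structural fact---a fixed marginal plus a limit measurable with respect to it---and both are standard devices for this upgrade. The paper's triple-coupling trick is a bit shorter and avoids the auxiliary $\psi=\arctan$ and the $3\varepsilon$ density step (it applies directly to all $f\in L^{1}\left(\mu\right)$ since Lemma \ref{lem:Compactness} already gives joint convergence for such $f$); your argument, on the other hand, makes the stable-convergence content more explicit and is perhaps more transparent about \emph{why} measurability of $M$ in $X$ is exactly what is needed.
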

The rest of the proof concerns identifying GMC-like properties of
$M$ by passing to the limit the corresponding properties of $M_{Y_{n}}$.
\begin{lem}
For any $\xi\in H$ 
\[
\Law\left[X+\xi,e^{\left\langle Y,\xi\right\rangle }M\right]\ll\Law\left[X,M\right],
\]
and for any positive random variable $g$, measurable with respect
to $\left(X,M\right)$, we have
\begin{equation}
\E g\left(X+\xi,e^{\left\langle Y,\xi\right\rangle }M\right)=\E e^{\left\langle X,\xi\right\rangle -\frac{1}{2}\norm{\xi}^{2}}g\left(X,M\right).\label{eq:CoupledGMC}
\end{equation}
Furthermore, $Y$ is a randomized shift, and $\Ec MX=:M_{Y}$ is the
GMC associated to it. \label{lem:CoupledGMC}
\end{lem}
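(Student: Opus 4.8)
The plan is to transfer to the limit the exact Cameron--Martin identity satisfied by each approximating GMC. For every $n$, property (3) of Definition \ref{def:GMC} gives $M_{Y_{n}}\left(X+\xi,dt\right)=e^{\left\langle Y_{n}\left(t\right),\xi\right\rangle }M_{Y_{n}}\left(X,dt\right)$, and combining this with the Cameron--Martin theorem for the shift $X\mapsto X+\xi$ yields, for every bounded measurable $g$ on $H\times\mathcal{M}\left(\mathcal{T}\right)$,
\[
\E\,g\bigl(X+\xi,\,e^{\left\langle Y_{n},\xi\right\rangle }M_{Y_{n}}\bigr)=\E\,e^{\left\langle X,\xi\right\rangle -\frac{1}{2}\norm{\xi}^{2}}\,g\bigl(X,M_{Y_{n}}\bigr).
\]
I would then pass to the limit along the convergent subsequence using $\left(X,M_{Y_{n}}\right)\overset{\Law}{\to}\left(X,M\right)$ from Lemma \ref{lem:Compactness}. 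On the right-hand side the only subtlety is the unbounded weight $e^{\left\langle X,\xi\right\rangle -\frac{1}{2}\norm{\xi}^{2}}$; since it is a single fixed log-normal variable with all moments and $g$ is bounded, the products are uniformly integrable, so truncating the weight at a large level lets me push the weak convergence through and identify the limit as $\E\,e^{\left\langle X,\xi\right\rangle -\frac{1}{2}\norm{\xi}^{2}}g\left(X,M\right)$.

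The substantive work is on the left-hand side, where I must establish the joint convergence in law $\bigl(X,e^{\left\langle Y_{n},\xi\right\rangle }M_{Y_{n}}\bigr)\overset{\Law}{\to}\bigl(X,e^{\left\langle Y,\xi\right\rangle }M\bigr)$ of the tilted measures. The obstacle is that by \eqref{eq:ConvergenceOfShifts} the density $e^{\left\langle Y_{n},\xi\right\rangle }$ converges only in $\mu$-measure and is both unbounded and of possibly unbounded $\mu$-integral, so the weak convergence of $M_{Y_{n}}$ cannot be applied to it directly. My plan is to truncate: fixing a level $K$, replace $e^{\left\langle Y_{n},\xi\right\rangle }$ by $e^{\left\langle Y_{n},\xi\right\rangle }\wedge K$. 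For fixed $K$ this density is bounded and converges in $\mu$-measure to $e^{\left\langle Y,\xi\right\rangle }\wedge K$, so testing against any $f\in L^{1}\left(\mu\right)$ and using $\E M_{Y_{n}}=\mu$ (dominated convergence) shows that $\intop f\,(e^{\left\langle Y_{n},\xi\right\rangle }\wedge K)\,dM_{Y_{n}}$ has the same limit as $\intop f\,(e^{\left\langle Y,\xi\right\rangle }\wedge K)\,dM_{Y_{n}}$, and the latter converges to $\intop f\,(e^{\left\langle Y,\xi\right\rangle }\wedge K)\,dM$ by Lemma \ref{lem:Compactness}.

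The crux, and what I expect to be the main obstacle, is a control of the truncation error that is uniform in $n$. Here I would return to the shifted picture: since $e^{\left\langle Y_{n},\xi\right\rangle }dM_{Y_{n}}\left(X\right)=dM_{Y_{n}}\left(X+\xi\right)$, the error is dominated by $M_{Y_{n}}\left(X+\xi\right)\left[B_{n,K}\right]$ with $B_{n,K}:=\left\{ t:\left\langle Y_{n}\left(t\right),\xi\right\rangle >\log K\right\} $ a deterministic set. Rewriting the shifted measure by its Cameron--Martin density and applying Cauchy--Schwarz against the log-normal weight, followed by Markov's inequality together with $\E M_{Y_{n}}=\mu$, I expect a bound of the form
\[
\P\bigl(M_{Y_{n}}\left(X+\xi\right)\left[B_{n,K}\right]>\varepsilon\bigr)\le e^{\frac{1}{2}\norm{\xi}^{2}}\bigl(\varepsilon^{-1}\mu\left\{ \left\langle Y_{n},\xi\right\rangle >\log K\right\} \bigr)^{1/2}.
\]
Since \eqref{eq:ConvergenceOfShifts} forces $\left\langle Y_{n},\xi\right\rangle $ to be tight in $\mu$-measure, $\sup_{n}\mu\left\{ \left\langle Y_{n},\xi\right\rangle >\log K\right\} \to0$ as $K\to\infty$, so the error vanishes in probability uniformly in $n$; this is exactly the uniform stochastic absolute continuity isolated in Section \ref{sub:USAC}. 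Combining the truncated convergence with this uniform error bound gives the left-hand limit $\E\,g\bigl(X+\xi,e^{\left\langle Y,\xi\right\rangle }M\bigr)$ for bounded continuous $g$, which proves \eqref{eq:CoupledGMC}; a monotone class argument extends it to all positive measurable $g$, and the absolute continuity assertion follows immediately.

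Finally, for the last statement I would specialize \eqref{eq:CoupledGMC} to $g\left(X,M\right)=\varphi\left(X\right)G\bigl(\intop f\,dM\bigr)$ with $\varphi,G\ge0$ bounded continuous and $f\ge0$, set $M_{Y}:=\Ec MX$, condition on $X$, and undo the Cameron--Martin shift on the resulting right-hand side. Letting $\varphi,G,f$ vary then forces $M_{Y}\left(X+\xi,dt\right)=e^{\left\langle Y\left(t\right),\xi\right\rangle }M_{Y}\left(X,dt\right)$ almost surely, i.e. $M_{Y}$ satisfies Definition \ref{def:GMC}; since $\E M_{Y}=\E M=\mu$ is $\sigma$-finite, $M_{Y}$ is a subcritical GMC associated to $Y$, and Theorem \ref{thm:GMCShifts} then yields that $Y$ is a randomized shift.
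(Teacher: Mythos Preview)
Your approach is correct and follows essentially the same strategy as the paper: write down the Cameron--Martin/GMC identity at level $n$, pass the right-hand side through by uniform integrability of the fixed log-normal weight, and on the left establish $\left(X,e^{\left\langle Y_{n},\xi\right\rangle }M_{Y_{n}}\right)\overset{\Law}{\to}\left(X,e^{\left\langle Y,\xi\right\rangle }M\right)$ via a uniform control on the tilted measures. The paper packages the left-hand step abstractly, invoking Lemma~\ref{lem:StochasticLebesgue} together with the degenerate case of Lemma~\ref{lem:USACGMC}; you unfold that machinery by an explicit truncation at level $K$ and your Cauchy--Schwarz/Markov bound, which is precisely the uniform stochastic absolute continuity the paper needs. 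Your version is slightly more self-contained here, while the paper's abstraction pays off later when the same lemmas are reused with genuinely randomized shifts in the proof of Claim~\ref{cl:Gtrick}. You also spell out the ``Furthermore'' clause (conditioning on $X$ and undoing the shift to obtain the GMC property for $M_{Y}:=\Ec{M}{X}$), which the paper leaves implicit; that derivation is correct once you pass from bounded $G$ to $G(x)=x$ by monotone convergence, using $\E M=\mu$ to guarantee finiteness.
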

The property \eqref{eq:CoupledGMC} can be viewed as the natural extension
of the definition of GMC to random measures that are not necessarily
measurable with respect to $X$.

The remaining part of the statement of Theorem \ref{thm:Approximation}
is the measurability of $M$ with respect to $X$, or equivalently,
$M=\Ec MX$. The key role in proving that is played by the following
claim:
\begin{claim}
Let $G:\RR_{+}\to\RR_{+}$ be any bounded continuous function. Then
\[
\E M\left[\mathcal{T}\right]G\left(M\left[\mathcal{T}\right]\right)=\E\intop\mu\left(dt\right)G\left(\intop e^{\left\langle Y\left(t\right),Y\left(s\right)\right\rangle }M\left(ds\right)\right)\le\E M_{Y}\left[\mathcal{T}\right]G\left(M\left[\mathcal{T}\right]\right).
\]
\label{cl:Gtrick}
\end{claim}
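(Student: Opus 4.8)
The plan is to prove the two assertions separately: the equality ``$\underset{(1)}{=}$'', which is a statement about the law of the coupling $(X,M)$ alone, and the inequality ``$\underset{(2)}{\le}$'', which uses the Cameron--Martin structure of that coupling recorded in Lemma \ref{lem:CoupledGMC}.

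\textbf{The equality.} For each fixed $n$ the measure $M_{Y_n}$ is a genuine subcritical GMC associated to $Y_n$, so Theorem \ref{thm:GMCShifts} together with Lemma \ref{lem:ExpK} (which licenses substituting the randomized shift $Y_n(t)$ for a deterministic Cameron--Martin vector in the defining relation) gives, taking $f(X,t):=G(M_{Y_n}[\mathcal{T}])$,
\[
\E M_{Y_n}[\mathcal{T}]\,G(M_{Y_n}[\mathcal{T}]) = \E\intop\mu(dt)\,G\!\left(\intop e^{\langle Y_n(t),Y_n(s)\rangle}M_{Y_n}(ds)\right),
\]
the inner integral being the total mass $M_{Y_n}[\mathcal{T}]$ of the field shifted by $Y_n(t)$. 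I would then pass to the limit along the subsequence $(n')$ of Lemma \ref{lem:Compactness}. On the left, $M_{Y_{n'}}[\mathcal{T}]\to M[\mathcal{T}]$ in law, the map $x\mapsto xG(x)$ is continuous, and since $G$ is bounded the family $\{M_{Y_n}[\mathcal{T}]\,G(M_{Y_n}[\mathcal{T}])\}$ is uniformly integrable (it is dominated by $\norm G_\infty M_{Y_n}[\mathcal{T}]$, which is uniformly integrable by hypothesis), so the expectations converge to $\E M[\mathcal{T}]G(M[\mathcal{T}])$. On the right one uses the joint convergence $(X,M_{Y_{n'}})\to(X,M)$ together with $K_{Y_nY_n}\to K_{YY}$ to identify the limit as $\E\intop\mu(dt)G(\intop e^{\langle Y(t),Y(s)\rangle}M(ds))$.

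\textbf{The inequality.} Here I would realize the heuristic ``randomize $\xi$'' in \eqref{eq:GtrickXi} rigorously through the finite-rank truncations $Y^{(N)}(t):=\sum_{i\le N}\langle Y(t),e_i\rangle e_i$. For $\mu$-a.e.\ fixed $t$ this is a bona fide vector of $H$, so \eqref{eq:GtrickXi} applies with $\xi=Y^{(N)}(t)$; integrating over $t$ against $\mu$ and using $\langle Y(s),Y^{(N)}(t)\rangle=\sum_{i\le N}\langle Y(t),e_i\rangle\langle Y(s),e_i\rangle$ yields the exact identity
\[
\E\intop\mu(dt)\,G\!\left(\intop e^{\sum_{i\le N}\langle Y(t),e_i\rangle\langle Y(s),e_i\rangle}M(ds)\right) = \E\,M_{Y^{(N)}}[\mathcal{T}]\,G(M[\mathcal{T}]),
\]
where $M_{Y^{(N)}}$ is the honest exponential GMC of Example \ref{ex:TrivialGMC} for the field $Y^{(N)}$. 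The crucial observation is that $M_{Y^{(N)}}[\mathcal{T}]=\Ec{M_Y[\mathcal{T}]}{\mathcal{F}_N}$ for $\mathcal{F}_N:=\sigma(\langle X,e_i\rangle:i\le N)$, which I would verify by testing against bounded $\mathcal{F}_N$-measurable functions and invoking the randomized-shift relation of the GMC $M_Y$ (Lemma \ref{lem:CoupledGMC}, Theorem \ref{thm:GMCShifts}). Consequently the right-hand side is a uniformly integrable martingale in $N$ converging in $L^1$ to $M_Y[\mathcal{T}]$, and since $G$ is bounded one gets $\E M_{Y^{(N)}}[\mathcal{T}]G(M[\mathcal{T}])\to\E M_Y[\mathcal{T}]G(M[\mathcal{T}])$.

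\textbf{The truncation limit, and the main obstacle.} It then remains to let $N\to\infty$ on the left of the last identity. By Corollary \ref{cor:Moments} the truncated kernels $\sum_{i\le N}\langle Y(t),e_i\rangle\langle Y(s),e_i\rangle$ converge to $\langle Y(t),Y(s)\rangle$ in $L^2(\mu\otimes\mu)$, and a Fatou argument then gives $\E\intop\mu(dt)G(\intop e^{\langle Y(t),Y(s)\rangle}M(ds))\le\liminf_N(\text{left-hand side})=\E M_Y[\mathcal{T}]G(M[\mathcal{T}])$, which is precisely the one-sided bound ``$\le$''. I expect this transfer to be the main obstacle, and it is also the reason the statement is an inequality rather than an equality: the limiting measure $M$ is typically $\mu$-singular, so $\mu\otimes\mu$-convergence of the truncated kernels does not pass directly to $M(ds)$-convergence, and the exponential is not controlled by $L^1$-convergence of its exponent. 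The device that makes the passage possible is the identity $\E M=\mu$, which lets every relevant integrand be estimated after taking expectations, at the cost of only a Fatou inequality. The same singular-measure transfer is what must be handled carefully in the right-hand limit of the equality part above. Combining the two parts gives \eqref{eq:GtrickY}.
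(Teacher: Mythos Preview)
Your approach matches the paper's: split into the equality and the inequality; for the equality, pass the identity for $M_{Y_n}$ (Theorem \ref{thm:GMCShifts} plus Lemma \ref{lem:ExpK}) to the limit along the subsequence; for the inequality, apply \eqref{eq:GtrickXi} with $\xi=P_nY(t)$, use that $M_{P_nY}[\mathcal T]=\Ec{M_Y[\mathcal T]}{P_nX}$ is a uniformly integrable martingale, and close with Fatou.

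The only place you are genuinely vague is the convergence of the right-hand side in the equality part. You correctly flag the ``singular-measure transfer'' as the obstacle, but your suggested device (``$\E M=\mu$ lets the integrand be estimated after taking expectations, at the cost of a Fatou inequality'') yields only an inequality, which is enough for part $(2)$ but not for the exact equality in part $(1)$. The paper resolves this with the machinery of Section \ref{sub:USAC}: it rewrites $\intop e^{K_{Y_nY_n}(T,s)}M_{Y_n}(ds)$ as the integral of $e^{K_{Y_nY_n}}$ against the random measure $\delta_T\otimes M_{Y_n}$ on $\mathcal T\times\mathcal T$, shows via Lemma \ref{lem:ExpK} that this family is \emph{uniformly stochastically absolutely continuous} with respect to $\mu\otimes\mu$ (Lemma \ref{lem:USACGMC}), and then invokes the stochastic Lebesgue-type Lemma \ref{lem:StochasticLebesgue} to obtain actual convergence in law of $\intop e^{K_{Y_nY_n}(T,s)}M_{Y_n}(ds)$ to $\intop e^{K_{YY}(T,s)}M(ds)$. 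Without this (or an equivalent uniform control), the equality step of your proposal does not go through.
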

Note that a part of the claim is that $\intop e^{\left\langle Y\left(t\right),Y\left(s\right)\right\rangle }M\left(ds\right)<\infty$
almost surely for $\mu$-almost all $t$. This is also contained in
Lemma \ref{lem:ExpK} below.

Finally we show how Theorem \ref{thm:Approximation} follows from
the above lemmas.
\begin{proof}
[Proof of Theorem \ref{thm:Approximation}] We use Lemma \ref{lem:Compactness}
to extract a subsequence $\left(n^{\prime}\right)$ and a limiting
couple $\left(X,M\right)$. Apply Claim \ref{cl:Gtrick} with the
function $G\left(x\right):=\frac{x}{1+x}$ to obtain the following
chain of inequalities:
\begin{equation}
\begin{aligned}\E M\left[\mathcal{T}\right]G\left(\Ec{M\left[\mathcal{T}\right]}X\right) & =\E\left[\Ec{M\left[\mathcal{T}\right]}XG\left(\Ec{M\left[\mathcal{T}\right]}X\right)\right]\\
 & \le\E M\left[\mathcal{T}\right]G\left(M\left[\mathcal{T}\right]\right)\\
 & \underset{(!)}{\le}\E\left[\Ec{M\left[\mathcal{T}\right]}XG\left(M\left[\mathcal{T}\right]\right)\right]\\
 & =\E\left[M\left[\mathcal{T}\right]\Ec{G\left(M\left[\mathcal{T}\right]\right)}X\right]\\
 & \le\E\left[M\left[\mathcal{T}\right]G\left(\Ec{M\left[\mathcal{T}\right]}X\right)\right].
\end{aligned}
\label{eq:TrickJensen}
\end{equation}
The inequality ``$\underset{(!)}{\le}$'' is the content of Claim
\ref{cl:Gtrick}. The first and the last inequalities are both instances
of Jensen's inequality, applied to the strictly convex function $x\mapsto xG\left(x\right)$
and the strictly concave function $G$ respectively. Equality in Jensen's
inequality implies that $M\left[\mathcal{T}\right]=\Ec{M\left[\mathcal{T}\right]}X=M_{Y}\left[\mathcal{T}\right]$,
so that $M\left[\mathcal{T}\right]$ is measurable with respect to
$X$.

The same argument works with $M$ replaced by $f\cdot M$ for any
nonnegative bounded $f$ (and $\mu$ by $f\cdot\mu$ accordingly),
which amounts to replacing $M\left[\mathcal{T}\right]$ by $\intop f\left(t\right)M\left(dt\right)$.
Thus in fact Claim \ref{cl:Gtrick} implies that $M$ is measurable
with respect to $X$, i.e. $M=M_{Y}$.

Now we use Lemma \ref{lem:MeasurabilityImpliesConvergence} to assert
that $M_{Y}$ is a limit in $L^{1}$ of the chosen subsequence. By
Lemma \ref{lem:Compactness}, any subsequence has a convergent subsequence,
and since the limit is the unique (Corollary \ref{cor:Uniqueness})
subcritical GMC $M_{Y}$, it does not depend on the choice of subsequences.
This means that the original sequence converges.
\end{proof}

\subsection{The ``$\exp K$ lemma'' \label{sub:ExpK}}

A central role in the proof of Theorem \ref{thm:Approximation} is
played by Lemma \ref{lem:ExpK} stated below.
\begin{lem}
Let $Z$ and $W$ be randomized shifts, defined on the probability
space $\left(\mathcal{T},\mu\right)$, and let $M_{Z},M_{W}$ be the
corresponding subcritical GMCs with expectation $\mu$. Let $K_{ZW}\left(t,s\right):=\left\langle Z\left(t\right),W\left(s\right)\right\rangle $.
Then:

\begin{equation}
M_{Z}\left(X+W\left(s\right),dt\right)=\exp K_{ZW}\left(t,s\right)\cdot M_{Z}\left(X,dt\right)\ \left(\P\otimes\mu\text{-a.s.}\right)\label{eq:GMCProperty}
\end{equation}
and
\begin{equation}
\E\left[M_{Z}\left(X\right)\otimes M_{W}\left(X\right)\right]=\exp K_{ZW}\cdot\mu\otimes\mu.\label{eq:SecondMoment}
\end{equation}
\label{lem:ExpK}
\end{lem}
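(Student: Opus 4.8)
The plan is to prove the pointwise shift identity \eqref{eq:GMCProperty} first (in which $M_W$ plays no role) and then to deduce the second-moment identity \eqref{eq:SecondMoment} by integrating \eqref{eq:GMCProperty} against $M_W$ through the Peyrière measure of Theorem \ref{thm:GMCShifts}. For \eqref{eq:GMCProperty} I would begin from the defining property of a GMC for \emph{deterministic} Cameron-Martin shifts (item 3 of Definition \ref{def:GMC}) and upgrade the shift from a vector $\xi\in H$ to the randomized shift $W(s)$ by finite-rank approximation. Fixing an orthonormal basis $(e_i)$ of $H$ and letting $P_N$ be the projection onto its first $N$ coordinates, the truncation $P_N W(s)=\sum_{i\le N}\langle W(s),e_i\rangle e_i$ is, for $\mu$-almost every $s$, a genuine vector of $H$, so Definition \ref{def:GMC} applies verbatim and gives, $\P\otimes\mu$-almost surely,
\[
M_Z(X+P_N W(s),dt)=\exp\langle Z(t),P_N W(s)\rangle\,M_Z(X,dt),
\]
where $\langle Z(t),P_N W(s)\rangle=\sum_{i\le N}\langle Z(t),e_i\rangle\langle W(s),e_i\rangle$.

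The core of the argument is the passage $N\to\infty$. On the right the exponent converges to $K_{ZW}(t,s)=\langle Z(t),W(s)\rangle$ in $L^{0}(\mu\otimes\mu)$: by Corollary \ref{cor:Moments} (applied to $Z$, to $W$, and to $Z+W$ by polarization) the cross-kernel $K_{ZW}$ is a genuine function on $\mathcal{T}\times\mathcal{T}$, and the partial sums are exactly its orthogonal expansion, convergent in $L^{2}(\mu'\otimes\mu')$ for a suitable $\mu'\sim\mu$, hence in measure. On the left $X+P_N W(s)\to X+W(s)$, and since $W$ (and therefore each $P_N W$) is a randomized shift, all these fields have law absolutely continuous with respect to that of $X$, so that the fixed $X$-measurable functional $M_Z$ may be evaluated along the whole sequence. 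Testing against $f\in L^{1}(\mu)$ with $f\ge0$, I would show that both $\intop f(t)M_Z(X+P_N W(s),dt)$ and $\intop f(t)e^{\langle Z(t),P_N W(s)\rangle}M_Z(X,dt)$ converge in $L^{1}(\P\otimes\mu)$ to $\intop f(t)M_Z(X+W(s),dt)$ and $\intop f(t)e^{K_{ZW}(t,s)}M_Z(X,dt)$ respectively; since the two sequences coincide for every $N$, their limits coincide $\P\otimes\mu$-almost surely, which is \eqref{eq:GMCProperty} once $f$ ranges over a countable determining family. The almost sure finiteness of $\intop e^{K_{ZW}(t,s)}M_Z(X,ds)$ for $\mu$-almost every $t$ drops out of this convergence.

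The main obstacle is precisely this double limit: the exponent converges only in measure, so $\exp\langle Z,P_N W\rangle$ is not dominated and one cannot simply invoke dominated convergence. Here I would exploit positivity together with the expectation identity
\[
\E_{\P\otimes\mu}\intop f(t)e^{\langle Z(t),P_N W(s)\rangle}M_Z(X,dt)=\intop f(t)e^{\langle Z(t),P_N W(s)\rangle}\mu(dt)\mu(ds),
\]
which follows from $\E M_Z=\mu$ by taking the $X$-expectation first (the exponent being, for fixed $s$, a deterministic function of $t$). Fatou's lemma then yields $\intop f(t)e^{K_{ZW}(t,s)}M_Z(X,dt)\le\liminf_N$, and matching of the expectations above pins down equality in a Scheffé-type manner, establishing both the $L^1$ convergence and the finiteness assertion. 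Lemma \ref{lem:CY} can be called upon here to furnish the subcritical-scaling moment bounds underlying the required uniform integrability. Alternatively, one may observe that the Radon–Nikodym density $M_Z(X+W(s),dt)/M_Z(X,dt)$ is invariant under every shift $X\mapsto X+\xi$ — by the Cameron–Martin density computation used in the proof of Theorem \ref{thm:GMCShifts} — hence independent of $X$, so that \eqref{eq:GMCProperty} follows from knowing its $(t,s)$-marginal, i.e.\ from \eqref{eq:SecondMoment}.

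Once \eqref{eq:GMCProperty} is available, \eqref{eq:SecondMoment} is bookkeeping. For $f,g\in L^{1}(\mu)$, using the Peyrière measure $\Q(d\omega,ds):=\P(d\omega)M_W(X(\omega),ds)$ of $M_W$ together with its characterization \eqref{eq:QShift},
\[
\E\left[\left(\intop f(t)M_Z(X,dt)\right)\left(\intop g(s)M_W(X,ds)\right)\right]=\E_\Q\left[g(s)\intop f(t)M_Z(X,dt)\right]=\E_{\P\otimes\mu}\left[g(s)\intop f(t)M_Z(X+W(s),dt)\right].
\]
Substituting \eqref{eq:GMCProperty} to replace $M_Z(X+W(s),dt)$ by $e^{K_{ZW}(t,s)}M_Z(X,dt)$, taking $\E_\P$ first via $\E M_Z=\mu$, and then integrating in $s$ gives $\intop f(t)g(s)e^{K_{ZW}(t,s)}\mu(dt)\mu(ds)$. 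Since product functions $f\otimes g$ are measure-determining on $\mathcal{T}\times\mathcal{T}$, this is exactly $\E[M_Z(X)\otimes M_W(X)]=\exp K_{ZW}\cdot\mu\otimes\mu$.
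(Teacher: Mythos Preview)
Your architecture matches the paper's: truncate $W$ by finite-rank projections $P_N$, use Definition~\ref{def:GMC} for genuine $H$-vectors, pass to the limit, and then deduce \eqref{eq:SecondMoment} from \eqref{eq:GMCProperty} via the Peyri\`ere measure. The last step is correct and is essentially how the paper does it.

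The gap is the passage $N\to\infty$. Your Schef\-f\'e-type plan cannot work, because the expectations you intend to match,
\[
\E_{\P\otimes\mu}\intop f(t)e^{\langle Z(t),P_N W(s)\rangle}M_Z(X,dt)=\intop f(t)e^{\langle Z(t),P_N W(s)\rangle}\mu(dt)\,\mu(ds),
\]
need not stay finite in the limit: the paper explicitly remarks that $\E[M_Z\otimes M_W]$ may assign infinite mass to every rectangle $A\times B$ of positive $\mu$-measure, so $\intop e^{K_{ZW}}\,d\mu\otimes d\mu=\infty$ is the typical situation. Hence there is no $L^1(\P\otimes\mu)$ convergence to aim for, on either side, and Fatou followed by ``matching expectations'' gives nothing. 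Your alternative---arguing that the Radon--Nikodym density $M_Z(X+W(s),\cdot)/M_Z(X,\cdot)$ is shift-invariant, hence $X$-independent, and then reading it off \eqref{eq:SecondMoment}---presupposes the absolute continuity $M_Z(X+W(s),\cdot)\ll M_Z(X,\cdot)$, which is exactly the nontrivial content of \eqref{eq:GMCProperty}; and since you derive \eqref{eq:SecondMoment} from \eqref{eq:GMCProperty}, this route is circular as written.

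The paper's missing ingredient is a \emph{strict subcriticality overshoot}. First assume $(1+\delta)W$ is itself a randomized shift for some $\delta>0$. A separate continuity lemma (Lemma~\ref{lem:ContinuityRandomized}) gives, for each fixed $c\in\{1,1+\delta\}$, convergence in $L^0(\P\otimes\mu)$ of $\intop f\,M_Z(X+cP_NW(s),\cdot)$ to $\intop f\,M_Z(X+cW(s),\cdot)$. Applying the finite-rank identity at $c=1+\delta$ shows that $\intop e^{(1+\delta)\langle P_NW(s),Z(t)\rangle}M_Z(X,dt)$ stays $\P\otimes\mu$-a.s.\ bounded along a subsequence; this supplies, \emph{$\omega$-by-$\omega$ and $s$-by-$s$}, a uniform $(1+\delta)$-moment of the densities against the random measure $M_Z(X,\cdot)$, hence uniform integrability there, and the $c=1$ limit follows by ordinary Lebesgue. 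The general $W$ is then approximated by $(1-\varepsilon)W$ (strictly subcritical by Lemma~\ref{lem:CY}) and $\varepsilon\downarrow0$ is handled by monotone convergence on $\{K_{ZW}\ge0\}$ and dominated convergence on $\{K_{ZW}<0\}$. Your mention of Lemma~\ref{lem:CY} is pointing in the right direction, but the $(1+\delta)$ overshoot---which turns an $L^0$ limit into pathwise uniform integrability without any global $L^1$ control---is the step your outline lacks.
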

Since both covariances $K_{ZZ}$ and $K_{WW}$ are Hilbert-Schmidt
by means of Corollary \ref{cor:Moments}, $K_{ZW}$ is also Hilbert-Schmidt.
Indeed, $\left[\begin{matrix}K_{ZZ} & K_{ZW}\\
K_{WZ} & K_{WW}
\end{matrix}\right]$ is positive definite and the diagonal blocks are Hilbert-Schmidt,
therefore so are the off-diagonal blocks.

We will use Lemma \ref{lem:ExpK} in the case $Z:=W:=Y_{n}$ or $Z:=W:=Y$,
but for the proof of Lemma \ref{lem:ExpK} it is convenient to separate
$Z$ and $W$.

In the case of ``trivial'' GMC (in the sense of Example \ref{ex:TrivialGMC})
both statements of Lemma \ref{lem:ExpK} are elementary, so in general
they may look formally ``obvious''. However, the real content of
Lemma \ref{lem:ExpK} lies in the implied absolute continuity:
\begin{equation}
M_{Z}\left(X+W\left(s\right)\right)\ll M_{Z}\left(X\right)\text{ (\ensuremath{\P\otimes\mu}-a.s.)},\label{eq:GMCPropertyAC}
\end{equation}
\begin{equation}
\E\left[M_{Z}\otimes M_{W}\right]\ll\E M_{Z}\otimes\E M_{W}.\label{eq:SecondMomentAC}
\end{equation}
In particular, \eqref{eq:SecondMomentAC} is a property specific to
GMCs that is far from being true for general random measures.

Another immediate observation following from Lemma \ref{lem:ExpK}
is that $\E\left[M_{Z}\otimes M_{W}\right]$ is $\sigma$-finite ---
since the right-hand side of \eqref{eq:SecondMoment} obviously is.
This measure may, however, fail to be locally finite in any reasonable
sense --- for example, it may assign infinite mass to every product
set $A\times B\subset\mathcal{T}\times\mathcal{T}$, such that $\mu\left[A\right],\mu\left[B\right]>0$.

In the proof of Lemma \ref{lem:ExpK} we will need the following general
fact:
\begin{lem}
Let $Z_{n}$ and $Z$ be randomized shifts of $X$ defined on the
same probability space $\left(\mathcal{T},\mu\right)$. Assume that
the family of random variables $\left\{ \intop_{\mathcal{T}}M_{Z_{n}}\left(X,dt\right),n\ge1\right\} $
is uniformly integrable. Also assume that $Z_{n}\to Z$ in the following
sense:
\[
\forall\xi\in H:\left\langle Z_{n},\xi\right\rangle \overset{L^{0}\left(\mathcal{T},\mu\right)}{\to}\left\langle Z,\xi\right\rangle .
\]
Then for every $f\left(X\right)\in L^{0}\left(\OMEGA,\sigma\left(X\right),\P\right)$
we have
\[
f\left(X+Z_{n}\right)\overset{L^{0}\left(\left(\OMEGA,\P\right)\otimes\left(\mathcal{T},\mu\right)\right)}{\to}f\left(X+Z\right).
\]
\label{lem:ContinuityRandomized}\end{lem}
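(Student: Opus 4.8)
The plan is to metrize convergence in $L^{0}\left(\left(\OMEGA,\P\right)\otimes\left(\mathcal{T},\mu\right)\right)$ by $d\left(U,V\right):=\E_{\P\otimes\mu}\left[\left|U-V\right|\wedge1\right]$ and to run the standard ``dense class plus equicontinuity'' scheme: establish the convergence first for \emph{cylindrical} $f$, where it follows at once from the hypothesis $\left\langle Z_{n},\xi\right\rangle\to\left\langle Z,\xi\right\rangle$, and then for general $f$ by controlling the approximation error \emph{uniformly in} $n$. The tool that makes the error bound work is the change-of-variables identity of Theorem \ref{thm:GMCShifts}: because $Z_{n}$ is a randomized shift, $\Law_{\P\otimes\mu}\left[X+Z_{n}\right]\ll\Law_{\P}X$ with Radon--Nikodym density $M_{Z_{n}}\left[\mathcal{T}\right]$, so that for every bounded measurable $h$ on the coordinate space
\[
\E_{\P\otimes\mu}\,h\left(X+Z_{n}\right)=\E\left[h\left(X\right)M_{Z_{n}}\left[\mathcal{T}\right]\right],
\]
and likewise with $Z_{n},M_{Z_{n}}$ replaced by $Z,M_{Z}$. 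This same absolute continuity is what makes the left-hand side $f\left(X+Z_{n}\right)$ well defined $\P\otimes\mu$-almost everywhere although $f$ is only prescribed $\P$-almost everywhere.

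First I would dispose of the cylindrical case $f\left(X\right)=\tilde{f}\left(\left\langle X,e_{1}\right\rangle,\dots,\left\langle X,e_{k}\right\rangle\right)$ with $\tilde{f}:\RR^{k}\to\RR$ bounded and continuous and $\left(e_{i}\right)$ a fixed orthonormal basis of $H$. On $\OMEGA\times\mathcal{T}$ the coordinate $\left\langle X+Z_{n},e_{i}\right\rangle=\left\langle X,e_{i}\right\rangle+\left\langle Z_{n},e_{i}\right\rangle$ splits into an $\omega$-only and a $t$-only term; the assumption $\left\langle Z_{n},e_{i}\right\rangle\overset{L^{0}\left(\mu\right)}{\to}\left\langle Z,e_{i}\right\rangle$ upgrades to convergence in $L^{0}\left(\P\otimes\mu\right)$ of each of the finitely many coordinates, and the continuous-mapping theorem together with continuity of $\tilde{f}$ yields $f\left(X+Z_{n}\right)\to f\left(X+Z\right)$ in $L^{0}\left(\P\otimes\mu\right)$. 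This step uses neither uniform integrability nor the randomized-shift structure.

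For general $f\in L^{0}\left(\OMEGA,\sigma\left(X\right),\P\right)$ I would fix $\varepsilon>0$ and proceed as follows. Since $\left\{M_{Z_{n}}\left[\mathcal{T}\right]\right\}\cup\left\{M_{Z}\left[\mathcal{T}\right]\right\}$ is uniformly integrable (one integrable variable adjoined to a uniformly integrable family), pick $K$ with $\sup_{n}\E\left[M_{Z_{n}}\left[\mathcal{T}\right]\I\left\{M_{Z_{n}}\left[\mathcal{T}\right]>K\right\}\right]<\varepsilon$ and the analogous bound for $M_{Z}$; then, using density of bounded continuous cylindrical functions in $L^{0}\left(\OMEGA,\sigma\left(X\right),\P\right)$, pick such a $g$ with $\E\left[\left|f\left(X\right)-g\left(X\right)\right|\wedge1\right]<\varepsilon/K$. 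Writing $h:=\left|f-g\right|\wedge1\le1$ and noting $h\left(X+Z_{n}\right)=\left|f\left(X+Z_{n}\right)-g\left(X+Z_{n}\right)\right|\wedge1$, the identity above gives
\[
\begin{aligned}
d\left(f\left(X+Z_{n}\right),g\left(X+Z_{n}\right)\right) & =\E\left[h\left(X\right)M_{Z_{n}}\left[\mathcal{T}\right]\right]\\
 & \le K\,\E\left[h\left(X\right)\right]+\E\left[M_{Z_{n}}\left[\mathcal{T}\right]\I\left\{M_{Z_{n}}\left[\mathcal{T}\right]>K\right\}\right]<2\varepsilon
\end{aligned}
\]
uniformly in $n$ (split at the level $K$ and use $h\le1$), and the same bound holds for $d\left(g\left(X+Z\right),f\left(X+Z\right)\right)$. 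Combining these two estimates with the cylindrical case through the triangle inequality $d\left(f\left(X+Z_{n}\right),f\left(X+Z\right)\right)\le d\left(f\left(X+Z_{n}\right),g\left(X+Z_{n}\right)\right)+d\left(g\left(X+Z_{n}\right),g\left(X+Z\right)\right)+d\left(g\left(X+Z\right),f\left(X+Z\right)\right)$, letting $n\to\infty$ so that the middle term vanishes, and then sending $\varepsilon\to0$ finishes the proof.

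I expect the uniform-in-$n$ tail control to be the only real obstacle. Without domination there is no reason for $d\left(f\left(X+Z_{n}\right),g\left(X+Z_{n}\right)\right)$ to be small simultaneously for all $n$, since the push-forward densities $M_{Z_{n}}\left[\mathcal{T}\right]$ could place their mass exactly where $g$ approximates $f$ poorly; it is precisely the uniform integrability hypothesis, transported by Theorem \ref{thm:GMCShifts} into a uniform bound on $\E\left[h\left(X\right)M_{Z_{n}}\left[\mathcal{T}\right]\right]$, that excludes this. Everything else reduces to the routine metrizability of convergence in measure and the density of cylindrical functions.
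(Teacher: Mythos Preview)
Your proof is correct and follows essentially the same route as the paper's: establish the convergence on a dense class (you use bounded continuous cylindrical functions, the paper uses trigonometric exponentials $e^{i\langle\xi,X\rangle}$), then use the identity $\E_{\P\otimes\mu}h(X+Z_{n})=\E\left[h(X)M_{Z_{n}}[\mathcal{T}]\right]$ from Theorem~\ref{thm:GMCShifts} together with the uniform integrability of $\{M_{Z_{n}}[\mathcal{T}]\}$ to control the approximation error uniformly in $n$. The only difference is packaging: you metrize $L^{0}$ and split at a level $K$, whereas the paper phrases the same uniform-integrability step as $\sup_{m}\E\left[M_{Z_{m}}[\mathcal{T}]\,\I\{|f_{n}(X)-f(X)|>\varepsilon\}\right]\to0$.
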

\begin{proof}
For $f\left(X\right):=\exp i\left\langle \xi,X\right\rangle $, $\xi\in H$,
this follows from the assumptions. It is well-known that linear combinations
of exponentials $\exp i\left\langle \xi,X\right\rangle $ are dense
in $L^{0}\left(\OMEGA,\sigma\left(X\right)\right)$. Let $f\in L^{0}\left(\OMEGA\right)$
and let $f_{n}\left(X\right)\to f\left(X\right)$ be a sequence of
linear combinations of exponentials approximating $f\left(X\right)$
in $L^{0}\left(\OMEGA\right)$.

We claim that
\begin{equation}
f_{n}\left(X+Z_{m}\right)\overset{L^{0}}{\to}f\left(X+Z_{m}\right),n\to\infty,\text{ uniformly in }m.\label{eq:UniformL0}
\end{equation}
Indeed, for any $\varepsilon$ we have
\[
\P\left\{ \left|f_{n}\left(X\right)-f\left(X\right)\right|>\varepsilon\right\} \to0,n\to\infty.
\]
Due to the uniform integrability of $\left\{ M_{Z_{m}}\left[\mathcal{T}\right]\right\} $,
this implies
\[
\sup_{m}\E M_{Z_{m}}\left[\mathcal{T}\right]\I\left\{ \left|f_{n}\left(X\right)-f\left(X\right)\right|>\varepsilon\right\} \to0,n\to\infty,
\]
so that
\[
\sup_{m}\E_{\P\otimes\nu}\I\left\{ \left|f_{n}\left(X+Z_{m}\right)-f\left(X+Z_{m}\right)\right|>\varepsilon\right\} \to0,n\to\infty,
\]
which implies \eqref{eq:UniformL0}. Similarly, we have 
\begin{equation}
f_{n}\left(X+Z\right)\overset{L^{0}}{\to}f\left(X+Z\right).\label{eq:fnf}
\end{equation}

Now by a standard argument \eqref{eq:UniformL0}, \eqref{eq:fnf}
and the fact that 
\[
\forall n:f_{n}\left(X+Z_{m}\right)\overset{L^{0}}{\to}f_{n}\left(X+Z\right),m\to\infty
\]
implies the statement of the lemma.
\end{proof}

\begin{proof}
[Proof of Lemma \ref{lem:ExpK}] As a first step, we prove \eqref{eq:GMCProperty}
under the assumption that
\[
\left(1+\delta\right)W\text{ is a randomized shift for some }\delta>0,
\]
which we refer to as ``strict subcriticality''.

We approximate the shift $W$ by its projections $P_{n}W$, where
$\left(P_{n}\right)$ is an increasing sequence of finite-dimensional
orthogonal projection operators in $H$, converging strongly to $1$.
Note that for $P_{n}W$ both statements of Lemma \ref{lem:ExpK} are
satisfied by the definition of GMC $M_{Z}$, since $P_{n}$ are finite-dimensional
and $P_{n}W$ are just $H$-valued functions.

Let $f\in L^{\infty}\left(\mathcal{T},\mu\right)$ be a positive function.
By applying Lemma \ref{lem:ContinuityRandomized} to $W_{n}:=cP_{n}W$,
$c\in\left\{ 1,1+\delta\right\} $ , we get:
\[
\intop f\left(t\right)M_{Z}\left(X+cP_{n}W\left(s\right),dt\right)\overset{L^{0}\left(\P\otimes\mu\right)}{\to}\intop f\left(t\right)M_{Z}\left(X+cW\left(s\right),dt\right).
\]
In the left-hand side, since $P_{n}$ is finite-dimensional, 
\begin{equation}
M_{Z}\left(X+cP_{n}W\left(s\right),dt\right)=e^{c\left\langle P_{n}W\left(s\right),Z\left(t\right)\right\rangle }M_{Z}\left(X,dt\right).\label{eq:ExpKFiniteDim}
\end{equation}
By applying \eqref{eq:ExpKFiniteDim} to $c=1+\delta$, we see that,
along a deterministic subsequence, the integrals
\[
\intop e^{\left(1+\delta\right)\left\langle P_{n}W\left(s\right),Z\left(t\right)\right\rangle }M_{Z}\left(X,dt\right)
\]
converge $\P\otimes\mu$-almost surely, and thus stay $\P\otimes\mu$-almost
surely bounded as $n$ increases. Therefore, the subsequence of functions
$t\mapsto e^{\left\langle P_{n}W\left(s\right),Z\left(t\right)\right\rangle }$
is $\P\otimes\mu$-almost surely uniformly integrable against the
random measure $M_{Z}$. Therefore, by the Lebesgue convergence theorem,
along that subsequence
\[
\intop f\left(t\right)e^{\left\langle P_{n}W\left(s\right),Z\left(t\right)\right\rangle }M_{Z}\left(X,dt\right)\to\intop f\left(t\right)e^{\left\langle W\left(s\right),Z\left(t\right)\right\rangle }M_{Z}\left(X,dt\right).
\]
This implies that $M_{Z}\left(X+W\left(s\right),dt\right)=e^{\left\langle W\left(s\right),Z\left(t\right)\right\rangle }M_{Z}\left(X,dt\right)$.

Next we reduce the general case to the strictly subcritical one. For
this we approximate a general randomized shift $W$ by shifts $\left(1-\varepsilon\right)W$
as $\varepsilon\to0$. All $\left(1-\varepsilon\right)W$ are obviously
strictly subcritical, so we already know that
\[
M_{Z}\left(X+\left(1-\varepsilon\right)W\right)=e^{\left(1-\varepsilon\right)K_{ZW}}M_{Z}\left(X\right).
\]
Now on the one hand, by Lemma \ref{lem:ContinuityRandomized}, for
any bounded $f\ge0$
\[
\intop f\left(t\right)M_{Z}\left(X+\left(1-\varepsilon\right)W,dt\right)\overset{L^{0}\left(\P\otimes\mu\right)}{\to}\intop f\left(t\right)M_{Z}\left(X+W,dt\right),\varepsilon\to0.
\]
On the other hand,
\[
\intop f\left(t\right)e^{\left(1-\varepsilon\right)K_{ZW}}M_{Z}\left(X,dt\right)\to\intop f\left(t\right)e^{K_{ZW}}M_{Z}\left(X,dt\right)
\]
almost surely. Indeed, one can split $\intop f\left(t\right)e^{\left(1-\varepsilon\right)K_{ZW}}M_{Z}\left(X,dt\right)$
into the integral over $\left\{ K_{ZW}\ge0\right\} $ and the integral
over $\left\{ K_{ZW}<0\right\} $; on $\left\{ K_{ZW}\ge0\right\} $
the integrand increases as $\varepsilon\downarrow0$, so the monotone
convergence theorem applies, and on $\left\{ K_{ZW}<0\right\} $ it
is dominated by $1$, which is integrable against $M_{Z}$. Thus we
have proved \eqref{eq:GMCProperty} in the general case.

\eqref{eq:SecondMoment} is deduced from \eqref{eq:GMCProperty} by
taking the expectation of both sides multiplied by $\mu\left(ds\right)$.
Indeed, on the one hand, by Theorem \ref{thm:GMCShifts},
\[
\E\left[M_{Z}\left(X+W\left(s\right),dt\right)\mu\left(ds\right)\right]=\E\left[M_{Z}\left(X,dt\right)\otimes M_{W}\left(X,ds\right)\right],
\]
and on the other hand,
\[
\E\left[\exp K_{ZW}\left(t,s\right)\cdot M_{Z}\left(X,dt\right)\mu\left(ds\right)\right]=\exp K_{ZW}\left(t,s\right)\cdot\mu\left(dt\right)\otimes\mu\left(ds\right).
\]

\end{proof}

\subsection{Uniform stochastic absolute continuity \label{sub:USAC}}

In order to deal with the approximations of random measures involved
in the proof of Claim \ref{cl:Gtrick} we need a measure-theoretic
tool (Lemma \ref{lem:StochasticLebesgue}) for proving convergence
of integrals of functions against random measures. This lemma can
be seen as a stochastic analogue of Lebesgue's convergence theorem,
and just like Lebesgue's theorem, it comes with a related notion of
``uniform integrability''.
\begin{defn}
A family of random measures $\left\{ M_{\alpha}\right\} $ on a measurable
space $\mathcal{T}$ is called \emph{uniformly stochastically absolutely
continuous} with respect to a deterministic probability measure $\mu$
on $\mathcal{T}$ if
\begin{itemize}
\item $\forall\alpha:\E M_{\alpha}\ll\mu$
\item For every $c>0$ we have
\[
\sup_{\substack{A\subset\mathcal{T}\\
\mu\left[A\right]\le\varepsilon
}
}\sup_{\alpha}\P\left\{ M_{\alpha}\left[A\right]>c\right\} \to0,\varepsilon\to0.
\]

\end{itemize}
\end{defn}
\begin{example}
If $\E M_{\alpha}=\mu$ for all $\alpha$ then $\left\{ M_{\alpha}\right\} $
is uniformly stochastically absolutely continuous. Indeed, if $\mu\left[A\right]<\varepsilon$
then $M_{\alpha}\left[A\right]$ are uniformly small in $L^{1}$,
therefore uniformly small in probability. \label{ex:USACL1}\end{example}
\begin{lem}
Let $\left(M_{n}\right)$ be a sequence of random measures on $\mathcal{T}$,
such that $\forall\alpha:\E M_{\alpha}\ll\mu$, and let $F_{n},F\in L^{0}\left(\mathcal{T},\mu\right)$.
Assume that:
\begin{itemize}
\item $M_{n}\to M$ in the sense that
\begin{equation}
\forall f\in L^{\infty}\left(\mathcal{T},\mu\right):\intop f\left(t\right)M_{n}\left(dt\right)\overset{L^{0}}{\to}\intop f\left(t\right)M\left(dt\right);\label{eq:ConvergenceOfRandomMeasures}
\end{equation}

\item $F_{n}\overset{L^{0}\left(\mu\right)}{\to}F$
\item For all $n$ we have
\[
\intop\left|F_{n}\left(t\right)\right|M_{n}\left(dt\right)<\infty;
\]

\item The family of random measures $\left\{ \left|F_{n}\left(t\right)\right|M_{n}\left(dt\right)\right\} $
is uniformly stochastically absolutely continuous with respect to
$\mu$.
\end{itemize}
Then
\[
\intop\left|F\left(t\right)\right|M\left(dt\right)<\infty
\]
and
\begin{equation}
\intop F_{n}\left(t\right)M_{n}\left(dt\right)\overset{L^{0}}{\to}\intop F\left(t\right)M\left(dt\right).\label{eq:ConvergenceOfIntegrals}
\end{equation}

The same is true if we replace convergence in $L^{0}$ by convergence
in law both in \eqref{eq:ConvergenceOfRandomMeasures} and in \eqref{eq:ConvergenceOfIntegrals}.
\label{lem:StochasticLebesgue}\end{lem}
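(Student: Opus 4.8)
The plan is to prove a stochastic version of Vitali's convergence theorem, in which uniform stochastic absolute continuity plays the role of uniform integrability and the convergence $F_n\to F$ in $\mu$-measure replaces a.e.\ convergence. I would prove the $L^0$ statement directly and then deduce the in-law version at the very end: uniform stochastic absolute continuity is a property of the laws (it only involves $\P\{M_\alpha[A]>c\}$), and by the Skorokhod representation theorem I may realize the in-law convergence of \eqref{eq:ConvergenceOfRandomMeasures} as almost sure weak convergence on an auxiliary space, which turns it into the $L^0$ hypothesis and lets the same estimates run.

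First I would make three reductions. \emph{Nonnegativity:} splitting $F_n=F_n^+-F_n^-$, the parts converge in $L^0(\mu)$ (continuity of $x\mapsto x^\pm$), the measures $F_n^\pm M_n\le|F_n|M_n$ inherit uniform stochastic absolute continuity, and $\int F_nM_n=\int F_n^+M_n-\int F_n^-M_n$; so assume $F_n,F\ge0$. \emph{Tightness of total mass:} taking $f\equiv1$ in \eqref{eq:ConvergenceOfRandomMeasures} gives $M_n[\mathcal T]\overset{L^0}{\to}M[\mathcal T]$, hence $\{M_n[\mathcal T]\}$ is tight; this is the device that absorbs every ``small-integrand'' contribution. \emph{Uniform tails:} since $\mu$ is a probability measure and $F_n\to F$ in measure with $F$ finite a.e., one checks $\rho(R):=\sup_n\mu\{F_n>R\}\to0$ as $R\to\infty$, and then from $\int(F_n-R)^+M_n\le(F_nM_n)[\{F_n>R\}]$ together with uniform stochastic absolute continuity of $\{F_nM_n\}$ one gets, for every $c>0$, that $\sup_n\P\{\int(F_n-R)^+M_n>c\}\to0$ as $R\to\infty$.

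Everything then reduces to the truncated convergence $\int(F_n\wedge R)M_n\overset{L^0}{\to}\int(F\wedge R)M$ for fixed $R$. Writing the difference as $\int[(F_n\wedge R)-(F\wedge R)]M_n+\big(\int(F\wedge R)M_n-\int(F\wedge R)M\big)$, the second bracket vanishes by \eqref{eq:ConvergenceOfRandomMeasures} applied to the \emph{fixed} bounded function $F\wedge R$. For the first bracket I would split the integrand into its overshoot and undershoot parts, $\psi_n^+=((F_n\wedge R)-(F\wedge R))^+$ and $\psi_n^-=((F\wedge R)-(F_n\wedge R))^+$. The overshoot is harmless: $\psi_n^+\le(F_n-F)^+\le F_n$, so on the discrepancy set $A_n=\{|F_n-F|>\delta\}$ (of $\mu$-measure $\to0$) one has $\int_{A_n}\psi_n^+M_n\le(F_nM_n)[A_n]\to0$ by uniform stochastic absolute continuity, while off $A_n$ one has $\psi_n^+\le\delta$ so $\int\psi_n^+M_n$ there is at most $\delta\,M_n[\mathcal T]$, absorbed by tightness. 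Dropping the nonnegative undershoot already gives the clean \emph{upper} bound $\limsup_n\int F_nM_n\le\int FM$ (and, by passing the uniform tail estimate to the limit, that $\int FM<\infty$ and $\int(F\wedge R)M\uparrow\int FM$).

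The hard part will be the matching \emph{lower} bound, i.e.\ that the undershoot $\int\psi_n^-M_n\to0$. This is exactly where the hypothesis is used most delicately. Because $M_n$ is typically $\mu$-singular, convergence of $F_n$ in $\mu$-measure does \emph{not} transfer to convergence in $M_n$-measure, and uniform stochastic absolute continuity of $\{F_nM_n\}$ is the bridge — but it is \emph{blind} precisely on the region $\{F_n\ \text{small},\ F\ \text{large}\}$, since there $F_nM_n$ sees almost no mass even if $M_n$ is large. Concretely $\psi_n^-\le(F-F_n)^+\le F$ is dominated by $F$, not by $F_n$, so the bound $\int_{A_n}\psi_n^-M_n\le R\cdot M_n[A_n]$ is \emph{not} controlled by uniform stochastic absolute continuity of $\{F_nM_n\}$ alone. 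I expect to close this gap by exploiting the full strength of the convergence $M_n\to M$ in $L^0$ against all fixed bounded test functions together with $\E M\ll\mu$: this should force the masses $M_n[\{F-F_n>\delta\}]$ (over deterministic sets of vanishing $\mu$-measure) to stay uniformly small in probability, transporting a uniform-absolute-continuity property onto the limit. Granting this, I would let $\delta\to0$, then $R\to\infty$, combine with the two (uniform-in-$n$ and limiting) tail bounds by an $\varepsilon/3$ argument to obtain \eqref{eq:ConvergenceOfIntegrals}, and finally read off the in-law statement from the Skorokhod reduction of the first paragraph.
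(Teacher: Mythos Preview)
Your proposal correctly isolates the essential difficulty, and the gap you flag in the ``undershoot'' term is real and is not repaired by your suggested fix. The sets $\{F-F_n>\delta\}$ depend on $n$, so the convergence hypothesis \eqref{eq:ConvergenceOfRandomMeasures} (which is only against \emph{fixed} $f\in L^\infty$) cannot be applied to bound $M_n[\{F-F_n>\delta\}]$; nor do the hypotheses contain any uniform-in-$n$ stochastic absolute continuity of the bare measures $M_n$ that would do the job. As you correctly diagnose, the assumed uniform stochastic absolute continuity is of $|F_n|M_n$, which is blind exactly on the region where $F_n$ is small and $F$ is large.

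The paper's argument dissolves the overshoot/undershoot asymmetry with a single device you did not use: Egorov's theorem. Pass to a subsequence along which $F_n\to F$ $\mu$-a.e., fix $\varepsilon>0$, and choose a set $A_\varepsilon$ \emph{independent of $n$} with $\mu[\mathcal T\setminus A_\varepsilon]\le\varepsilon$ on which $F_n\to F$ uniformly (and, shrinking $A_\varepsilon$ slightly if needed, on which $F$ is bounded). Then compare $\int F_nM_n$ not to $\int FM_n$ but directly to $\int_{A_\varepsilon}FM_n$: the discrepancy is at most
\[
\intop_{\mathcal T\setminus A_\varepsilon}|F_n|\,M_n\;+\;\mathop{\mathrm{ess}\sup}_{A_\varepsilon}|F_n-F|\cdot M_n[\mathcal T].
\]
The first term is small uniformly in $n$ by uniform stochastic absolute continuity of $|F_n|M_n$ (the bad set is fixed and of small $\mu$-measure), the second by uniform convergence and tightness of $M_n[\mathcal T]$. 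Finally $\int_{A_\varepsilon}FM_n\to\int_{A_\varepsilon}FM$ by the hypothesis, since $F\cdot\I_{A_\varepsilon}\in L^\infty$. The point is that with an $n$-independent exceptional set one only ever integrates $|F_n|$, never $|F|$, on the bad set, so the available hypothesis suffices and no separate treatment of the undershoot is needed. Your truncation-at-level-$R$ machinery and the split into $\psi_n^\pm$ are then unnecessary.
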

\begin{proof}
By passing to a subsequence, we may assume that $F_{n}\to F$ almost
everywhere. Fix $\varepsilon>0$. By Egorov's theorem, there is a
set $A_{\varepsilon}\subset\mathcal{T}$, such that $\mu\left[\mathcal{T}\setminus A_{\varepsilon}\right]\le\varepsilon$
and $F_{n}\to F$ uniformly on $A_{\varepsilon}$.
\begin{multline*}
\left|\intop F_{n}\left(t\right)M_{n}\left(dt\right)-\intop_{A_{\varepsilon}}F\left(t\right)M_{n}\left(dt\right)\right|\\
\le\intop_{\mathcal{T}\setminus A_{\varepsilon}}\left|F_{n}\left(t\right)\right|M_{n}\left(dt\right)+\mathop{\mathrm{ess}\sup}_{A_{\varepsilon}}\left|F_{n}-F\right|\cdot M_{n}\left[\mathcal{T}\right].
\end{multline*}
The first term is small in probability uniformly in $n$ whenever
$\varepsilon$ is small due to the uniform stochastic absolute continuity.
The second term is small in probability for fixed $\varepsilon$ and
large $n$ because the family of random variables $\left\{ M_{n}\left[\mathcal{T}\right]\right\} $
is bounded in probability and $\mathop{\mathrm{ess}\sup}_{A_{\varepsilon}}\left|F_{n}-F\right|\to0,n\to\infty$.

On the other hand, $\intop_{A_{\varepsilon}}F\left(t\right)M_{n}\left(dt\right)$
is close in probability (resp. in law) to $\intop_{A_{\varepsilon}}F\left(t\right)M\left(dt\right)$
by assumption.
\end{proof}
The next statement is based entirely on Lemma \ref{lem:ExpK}.
\begin{lem}
Let $\left\{ Y_{\alpha}\right\} $ and $\left\{ Z_{\beta}\right\} $
be families of randomized shifts on $\left(\mathcal{T},\mu\right)$,
and let $\left\{ M_{Y_{\alpha}}\left[\mathcal{T}\right]\right\} $
be uniformly integrable. Let $T$ be a random point in $\mathcal{T}$
with law $\mu$, independent of $X$. Then the family of random measures
$N_{\alpha\beta}$ on $\mathcal{T}\times\mathcal{T}$ given by
\[
N_{\alpha\beta}:=\exp K_{Y_{\alpha}Z_{\beta}}\left(T,\cdot\right)\cdot\delta_{T}\otimes M_{Z_{\beta}}\left(X\right)
\]
is uniformly stochastically absolutely continuous with respect to
$\mu\otimes\mu$. \label{lem:USACGMC}\end{lem}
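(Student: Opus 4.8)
The plan is to reduce everything, via Lemma \ref{lem:ExpK}, to a statement about the Peyrière measure of $Y_\alpha$, and then to deploy the uniform integrability of $\{M_{Y_\alpha}[\mathcal{T}]\}$ as a substitute for the (possibly absent) second moment of the GMC. Write $C_t:=\{s:(t,s)\in C\}$. The starting observation is that the weight $\exp K_{Y_\alpha Z_\beta}(T,\cdot)$ is exactly a randomized Cameron--Martin factor for $M_{Z_\beta}$: applying \eqref{eq:GMCProperty} with the roles $Z:=Z_\beta$, $W:=Y_\alpha$ and shift point $T$ gives $\exp K_{Y_\alpha Z_\beta}(T,s)\,M_{Z_\beta}(X,ds)=M_{Z_\beta}(X+Y_\alpha(T),ds)$, so that $N_{\alpha\beta}[C]=M_{Z_\beta}(X+Y_\alpha(T),C_T)$. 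Condition (1) in the definition of uniform stochastic absolute continuity is then immediate from \eqref{eq:SecondMoment}, since $\E N_{\alpha\beta}=\exp K_{Y_\alpha Z_\beta}\cdot\mu\otimes\mu$, which by Corollary \ref{cor:Moments} is finite $\mu\otimes\mu$-a.e. and hence $\ll\mu\otimes\mu$.

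For condition (2), I would pass to the Peyrière measure $\Q_\alpha(d\omega,dt):=\P(d\omega)M_{Y_\alpha}(X(\omega),dt)$. By Theorem \ref{thm:GMCShifts}, $\Law_{\Q_\alpha}[X,t]=\Law_{\P\otimes\mu}[X+Y_\alpha(t),t]$, so that for every $C$ with $(\mu\otimes\mu)[C]\le\varepsilon$,
\[
\P\left\{N_{\alpha\beta}[C]>c\right\}=\Q_\alpha\left\{M_{Z_\beta}(X,C_t)>c\right\}=\E\int\I\left\{M_{Z_\beta}(X,C_t)>c\right\}M_{Y_\alpha}(X,dt).
\]
Set $\Phi_\alpha:=M_{Y_\alpha}[\mathcal{T}]$; the $\Omega$-marginal of $\Q_\alpha$ is $\Phi_\alpha\,\P$, and $\{\Phi_\alpha\}$ is uniformly integrable by hypothesis. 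Fixing $\eta>0$, choose $K$ with $\sup_\alpha\E\,\Phi_\alpha\I\{\Phi_\alpha>K\}<\eta$. Splitting the integral over $\{\Phi_\alpha>K\}$ and $\{\Phi_\alpha\le K\}$, the first part is bounded by $\E\,\Phi_\alpha\I\{\Phi_\alpha>K\}<\eta$. On $\{\Phi_\alpha\le K\}$ I would use the Markov bound $\I\{\cdot>c\}\le \tfrac1c M_{Z_\beta}(X,C_t)$ and Fubini to reduce the remaining term to a \emph{truncated mixed second moment},
\[
\tfrac1c\,\E\left[\I\{\Phi_\alpha\le K\}\,(M_{Y_\alpha}\otimes M_{Z_\beta})[C]\right].
\]
The whole problem is thus reduced to showing this quantity is at most $K'(K)\,(\mu\otimes\mu)[C]\le K'(K)\,\varepsilon$; letting first $\varepsilon\to0$ and then $\eta\to0$ would finish the proof.

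The main obstacle is precisely this last estimate, i.e. that the truncated measure $\E[\I\{\Phi_\alpha\le K\}(M_{Y_\alpha}\otimes M_{Z_\beta})]$ has a density with respect to $\mu\otimes\mu$ bounded uniformly in $\alpha,\beta$. Without truncation the density is $\exp K_{Y_\alpha Z_\beta}$, which need \emph{not} be integrable — the diagonal blow-up of the kernel corresponds exactly to $\E(M[\mathcal{T}])^2=\infty$ in the genuinely subcritical regime — so plain Markov is insufficient and the uniform integrability must be used in an essential way. The mechanism I would exploit is that a further Peyrière shift in the $Y_\alpha$-variable turns $\{\Phi_\alpha(X)\le K\}$ into $\{\Phi_\alpha(X+Y_\alpha(t))\le K\}$, and shifting $X$ by $Y_\alpha(t)$ inflates the total mass $M_{Y_\alpha}[\mathcal{T}]$ by exactly the factor $\exp K_{Y_\alpha Y_\alpha}(t,\cdot)$ that diverges on the diagonal; hence the truncation suppresses the contribution of the singular region, and one expects $\E[\I\{\Phi_\alpha(X+Y_\alpha(t))\le K\}\,M_{Z_\beta}(X,ds)]\lesssim K'\exp[-K_{Y_\alpha Z_\beta}(t,s)]\,\mu(ds)$. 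Making this rigorous is the delicate part: it requires a quantitative Cameron--Martin/anti-concentration estimate, and — as in the proof of Lemma \ref{lem:ExpK} — a preliminary reduction to strictly subcritical or finite-rank shifts $(1-\epsilon)Y_\alpha$, $P_nY_\alpha$, where $Y_\alpha(t)$ is an honest Cameron--Martin vector and the inflation factor is well defined, followed by a monotone passage to the limit. Everything else (the reductions via \eqref{eq:GMCProperty}, \eqref{eq:SecondMoment}, and the uniform-integrability split) is routine.
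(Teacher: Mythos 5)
Your setup coincides with the paper's own first three steps: rewriting $N_{\alpha\beta}[C]=M_{Z_\beta}(X+Y_\alpha(T))[C_T]$ via \eqref{eq:GMCProperty}, obtaining $\E N_{\alpha\beta}=\exp K_{Y_\alpha Z_\beta}\cdot\mu\otimes\mu\ll\mu\otimes\mu$ via \eqref{eq:SecondMoment}, and using Theorem \ref{thm:GMCShifts} to get $\P\{N_{\alpha\beta}[C]>c\}=\E\intop\I\{M_{Z_\beta}(X)[C_t]>c\}M_{Y_\alpha}(X,dt)$. The divergence, and the gap, come next. You bound the indicator by $c^{-1}M_{Z_\beta}(X)[C_t]$ \emph{inside} the Peyri\`ere integral, i.e.\ you apply Markov under $\Q_\alpha$, and thereby reduce the lemma to the bound $\E\left[\I\{\Phi_\alpha\le K\}(M_{Y_\alpha}\otimes M_{Z_\beta})[C]\right]\le K'(K)\,(\mu\otimes\mu)[C]$, which you acknowledge you cannot prove. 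This is not a loose end; it is the entire content of the lemma, and moreover the bound as you state it is false. Truncating the total mass does not remove the diagonal singularity of the two-point function: already for a single GMC over a logarithmic kernel with $\gamma$ small (so that the total mass has finite second and fourth moments and uniform integrability is trivial), Lemma \ref{lem:ExpK} gives $\E[M\otimes M]=e^{K}\mu\otimes\mu$ with $e^{K(t,s)}=\norm{t-s}^{-\gamma^2}$ near the diagonal, and a Cauchy--Schwarz estimate shows that for $K$ large the truncated measure $\E[\I\{M[\mathcal{T}]\le K\}\,M\otimes M]$ retains at least half of this mass on diagonal strips $C_\epsilon:=\{(t,s):\norm{t-s}<\epsilon\}$; since that mass is of order $\epsilon^{1-\gamma^2}$ (say in dimension one) while $(\mu\otimes\mu)[C_\epsilon]\asymp\epsilon$, no constant $K'(K)$ can work. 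At best one could ask for uniform absolute continuity of the truncated second-moment measures (smallness as a function of $(\mu\otimes\mu)[C]$, not linear domination), but that is essentially the lemma itself, so the reduction is circular.

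The paper avoids second moments altogether by applying Markov \emph{before}, not after, the measure change. It splits the Peyri\`ere integral according to the size of the sections: on $\{t:\mu[A_t]>\varepsilon\}$ it drops the indicator and uses that the $t$-marginal of $\Q_\alpha$ is $\mu$, giving the bound $\mu\{t:\mu[A_t]>\varepsilon\}\le\varepsilon^{-1}(\mu\otimes\mu)[A]$; on $\{t:\mu[A_t]\le\varepsilon\}$ it uses Markov under $\P$ --- namely $\P\{M_{Z_\beta}(X)[A_t]>c\}\le c^{-1}\mu[A_t]\le c^{-1}\varepsilon$ for each fixed $t$, which costs nothing because $\E M_{Z_\beta}=\mu$ --- and only then invokes the uniform integrability of $\{M_{Y_\alpha}[\mathcal{T}]\}$, i.e.\ of the densities of the $\OMEGA$-marginals of $\Q_\alpha$ with respect to $\P$, to convert the smallness of these $\P$-probabilities into smallness of the corresponding $\Q_\alpha$-mass, uniformly in $\alpha,\beta$. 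Your instinct that uniform integrability must substitute for the missing second moment is correct, but it has to be deployed against events of small $\P$-probability; once you pass to expectations under the Peyri\`ere measure, the $\exp K_{Y_\alpha Z_\beta}$ blow-up is unavoidable and no truncation of the total mass repairs it.
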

\begin{proof}
Fix $\varepsilon>0$. Consider a measurable subset $A\subset\mathcal{T}\times\mathcal{T}$,
and denote by $A_{t},t\in\mathcal{T}$ its $t$-section, i.e. 
\[
A_{t}:=\left\{ s\in\mathcal{T}:\left(t,s\right)\in A\right\} \subset\mathcal{T}.
\]
By Lemma \ref{lem:ExpK}
\[
\begin{aligned}\P\left\{ N_{\alpha\beta}\left[A\right]>c\right\}  & =\P\left\{ \intop_{A_{T}}\exp K_{Y_{\alpha}Z_{\beta}}\left(T,s\right)M_{Z_{\beta}}\left(X,ds\right)>c\right\} \\
 & =\P\left\{ \intop_{A_{T}}M_{Z_{\beta}}\left(X+Y_{\alpha}\left(T\right),ds\right)>c\right\} \\
 & =\E\intop M_{Y_{\alpha}}\left(X,dt\right)\I\left\{ \intop_{A_{t}}M_{Z_{\beta}}\left(X,ds\right)>c\right\} \\
 & \le\mu\left\{ t:\nu\left[A_{t}\right]>\varepsilon\right\} \\
 & \quad+\E\intop M_{Y_{\alpha}}\left(X,dt\right)\I\left\{ \nu\left[A_{t}\right]\le\varepsilon\right\} \I\left\{ \intop_{A_{t}}M_{Z_{\beta}}\left(X,ds\right)>c\right\} .
\end{aligned}
\]
The first term is small whenever $\left(\mu\otimes\nu\right)\left[A\right]$
is small enough. The second term is small whenever $\varepsilon$
is small enough, since for $t$, such that $\nu\left[A_{t}\right]\le\varepsilon$,
we have 
\[
\P\left\{ \intop_{A_{t}}M_{Z_{\beta}}\left(X,ds\right)>c\right\} \le c^{-1}\E\intop_{A_{t}}M_{Z_{\beta}}\left(X,ds\right)=c^{-1}\nu\left[A_{t}\right]\le c^{-1}\varepsilon,
\]
and $\left\{ \intop M_{Y_{\alpha}}\left(X,dt\right)\right\} $ is
assumed to be uniformly integrable.
\end{proof}

\subsection{Proof of Lemma \ref{lem:CoupledGMC} \label{sub:CoupledGMC}}
\begin{proof}
It is enough to check \eqref{eq:CoupledGMC} for functions $g\left(X,M\right)$
of the form
\[
g\left(X,M\right):=\exp\left[\left\langle X,\eta\right\rangle -\frac{1}{2}\norm{\eta}^{2}\right]\cdot h\left(M\right)
\]
for all $\eta\in H$ and all bounded measurable $h$ that depend continuously
on finitely many integrals $\intop f_{1}\left(t\right)M\left(dt\right),\dots,\intop f_{m}\left(t\right)M\left(dt\right)$.
First we consider the case $\eta=0$. For the functions of such form,
it follows from the convergence in law $\left(X,M_{Y_{n}}\right)\overset{\Law}{\to}\left(X,M\right)$
that
\[
\E e^{\left\langle X,\xi\right\rangle -\frac{1}{2}\norm{\xi}^{2}}h\left(M_{Y_{n}}\right)\to\E e^{\left\langle X,\xi\right\rangle -\frac{1}{2}\norm{\xi}^{2}}h\left(M\right).
\]
On the other hand, by the Cameron-Martin formula and the definition
of GMC,
\[
\E e^{\left\langle X,\xi\right\rangle -\frac{1}{2}\norm{\xi}^{2}}h\left(M_{Y_{n}}\right)=\E h\left(M_{Y_{n}}\left(X+\xi\right)\right)=\E h\left(e^{\left\langle \xi,Y_{n}\right\rangle }M_{Y_{n}}\right).
\]
The random measures $e^{\left\langle \xi,Y_{n}\right\rangle }M_{Y_{n}}$
are uniformly stochastically absolutely continuous by a degenerate
special case of Lemma \ref{lem:USACGMC} where one of the shifts is
the deterministic shift $\xi$. Thus by Lemma \ref{lem:StochasticLebesgue}
applied to the measures $M_{Y_{n}}\overset{\Law}{\to}M$ we have
\[
\E h\left(e^{\left\langle \xi,Y_{n}\right\rangle }M_{Y_{n}}\right)\to\E h\left(e^{\left\langle \xi,Y\right\rangle }M\right).
\]
Therefore,
\begin{equation}
\E e^{\left\langle X,\xi\right\rangle -\frac{1}{2}\norm{\xi}^{2}}h\left(M\right)=\E h\left(e^{\left\langle Y,\xi\right\rangle }M\right).\label{eq:CoupledGMCEta0}
\end{equation}

Now consider the case where $\eta$ is not necessarily $0$. \eqref{eq:CoupledGMC}
amounts to proving that
\[
\E e^{\left\langle X,\xi+\eta\right\rangle -\frac{1}{2}\norm{\xi}^{2}-\frac{1}{2}\norm{\eta}^{2}}h\left(M\right)=\E e^{\left\langle X+\xi,\eta\right\rangle -\frac{1}{2}\norm{\eta}^{2}}h\left(e^{\left\langle Y,\xi\right\rangle }M\right).
\]
By \eqref{eq:CoupledGMCEta0}, the left-hand side above equals $e^{\left\langle \xi,\eta\right\rangle }\E h\left(e^{\left\langle Y,\xi+\eta\right\rangle }M\right)$.
The right-hand side is the same by \eqref{eq:CoupledGMCEta0} applied
to the function $\tilde{h}\left(M\right):=h\left(e^{\left\langle Y,\xi\right\rangle }M\right)$.
\end{proof}

\subsection{Proof of Claim \ref{cl:Gtrick}}

Claim \ref{cl:Gtrick} consists of two statements which we prove separately:

\begin{equation}
\E M\left[\mathcal{T}\right]G\left(M\left[\mathcal{T}\right]\right)=\E_{\P\otimes\mu}G\left(\intop\exp\left\langle Y\left(t\right),Y\left(s\right)\right\rangle M\left(ds\right)\right),\label{eq:TrickEq1}
\end{equation}
\begin{equation}
\E M_{Y}\left[\mathcal{T}\right]G\left(M\left[\mathcal{T}\right]\right)\ge\E_{\P\otimes\mu}G\left(\intop\exp\left\langle Y\left(t\right),Y\left(s\right)\right\rangle M\left(ds\right)\right).\label{eq:TrickEq2}
\end{equation}

\begin{proof}
[Proof of Claim \ref{cl:Gtrick}: \eqref{eq:TrickEq1}]

By uniform integrability of $\left\{ M_{Y_{n}}\left[\mathcal{T}\right]\right\} $
and convergence $M_{Y_{n}}\left[\mathcal{T}\right]\overset{\Law}{\to}M\left[\mathcal{T}\right]$
we have
\[
\E M\left[\mathcal{T}\right]G\left(M\left[\mathcal{T}\right]\right)=\lim_{n\to\infty}\E M_{Y_{n}}\left[\mathcal{T}\right]G\left(M_{Y_{n}}\left[\mathcal{T}\right]\right).
\]

By Theorem \ref{thm:GMCShifts} and Lemma \ref{lem:ExpK}, both applied
to a single randomized shift $Y_{n}$, we have
\[
\begin{aligned}\E M_{Y_{n}}\left[\mathcal{T}\right]G\left(M_{Y_{n}}\left[\mathcal{T}\right]\right) & =\E_{\P\otimes\mu}G\left(M_{Y_{n}}\left(X+Y_{n}\left(t\right)\right)\left[\mathcal{T}\right]\right)\\
 & =\E_{\P\otimes\mu}G\left(\intop\exp K_{Y_{n}Y_{n}}\left(t,s\right)M_{Y_{n}}\left(X,ds\right)\right).
\end{aligned}
\]
Thus to prove the claim it is enough to show that for a random point
$T$ in $\mathcal{T}$ with distribution $\mu$, independent of $X$,
we have 
\begin{equation}
\intop\exp K_{Y_{n}Y_{n}}\left(T,s\right)M_{Y_{n}}\left(X,ds\right)\overset{\Law}{\to}\intop\exp K_{YY}\left(T,s\right)M\left(ds\right).\label{eq:ConvergenceExpKM}
\end{equation}
We rewrite both integrals tautologically in a way that involves random
measures and deterministic (i.e. not dependent on $T$) functions:
\[
\intop_{\mathcal{T}}\exp K_{Y_{n}Y_{n}}\left(T,s\right)M_{Y_{n}}\left(X,ds\right)=\intop_{\mathcal{T}\times\mathcal{T}}\exp K_{Y_{n}Y_{n}}\left(t,s\right)\cdot\delta_{T}\left(dt\right)\otimes M_{Y_{n}}\left(X,ds\right),
\]
\[
\intop_{\mathcal{T}}\exp K_{YY}\left(T,s\right)M\left(ds\right)=\intop_{\mathcal{T}\times\mathcal{T}}\exp K_{YY}\left(t,s\right)\cdot\delta_{T}\left(dt\right)\otimes M\left(ds\right).
\]
Now we apply Lemma \ref{lem:USACGMC} to the randomized shifts $\left\{ Y_{\alpha}\right\} :=\left\{ Y_{n}\right\} ,\left\{ Z_{\beta}\right\} :=\left\{ Y_{n}\right\} $
and deduce that $\left\{ \exp K_{Y_{n}Y_{n}}\cdot\delta_{T}\otimes M_{Y_{n}}\right\} $
is uniformly stochastically absolutely continuous. Then apply Lemma
\ref{lem:StochasticLebesgue} to the random measures $\delta_{T}\otimes M_{Y_{n}}$
and functions $\exp K_{Y_{n}Y_{n}}$. Note that $M_{Y_{n}}\overset{\Law}{\to}M$
trivially implies $\delta_{T}\otimes M_{Y_{n}}\overset{\Law}{\to}\delta_{T}\otimes M$,
and that by the assumptions of Theorem \ref{thm:Approximation}, $\exp K_{Y_{n}Y_{n}}\overset{L^{0}\left(\mu\otimes\mu\right)}{\to}\exp K_{YY}$,
so indeed Lemma \ref{lem:StochasticLebesgue} is applicable in this
case, yielding \eqref{eq:ConvergenceExpKM}, and therefore also the
claim.
\end{proof}

\begin{proof}
[Proof of Claim \ref{cl:Gtrick}: \eqref{eq:TrickEq2}] The strategy
is to randomize the $\xi$ in Lemma \ref{lem:CoupledGMC} and thus
approximate the randomized shift $Y$. By applying \eqref{eq:CoupledGMC}
conditionally, we have for every measurable vector-valued function
$\xi:\mathcal{T}\to H$
\[
\E_{\P\otimes\mu}\exp\left[\left\langle \xi\left(t\right),X\right\rangle -\frac{1}{2}\norm{\xi\left(t\right)}^{2}\right]G\left(M\left[\mathcal{T}\right]\right)=\E_{\P\otimes\mu}G\left(\intop\exp\left\langle \xi\left(t\right),Y\left(s\right)\right\rangle M\left(ds\right)\right).
\]

Take any increasing sequence $\left(P_{n}\right)$ of finite-dimensional
projections in $H$ that converge strongly to $1$. Since $P_{n}$
has finite-dimensional range, $P_{n}Y$ is in fact a vector-valued
function. Therefore, we can take $\xi\left(t\right):=P_{n}Y\left(t\right)$
above and obtain
\begin{multline*}
\E_{\P\otimes\mu}\exp\left[\left\langle P_{n}Y\left(t\right),X\right\rangle -\frac{1}{2}\norm{P_{n}Y\left(t\right)}^{2}\right]G\left(M\left[\mathcal{T}\right]\right)\\
=\E_{\P\otimes\mu}G\left(\intop\exp\left\langle P_{n}Y\left(t\right),Y\left(s\right)\right\rangle M\left(ds\right)\right).
\end{multline*}
Since the kernel $\left\langle Y\left(t\right),Y\left(s\right)\right\rangle $
is Hilbert-Schmidt, the function $\left\langle P_{n}Y\left(t\right),Y\left(s\right)\right\rangle $
converges in measure ($\mu\otimes\mu$) to $\left\langle Y\left(t\right),Y\left(s\right)\right\rangle $.
Therefore, by Fatou's lemma
\begin{multline}
\E_{\P\otimes\mu}G\left(\intop\exp\left\langle Y\left(t\right),Y\left(s\right)\right\rangle M\left(ds\right)\right)\\
\le\liminf_{n\to\infty}\E_{\P\otimes\mu}G\left(\intop\exp\left\langle P_{n}Y\left(t\right),Y\left(s\right)\right\rangle M\left(ds\right)\right).\label{eq:EqTrick2LimInf}
\end{multline}
On the other hand, $t$ and $M\left[\mathcal{T}\right]$ are conditionally
independent given $X$, so
\begin{multline*}
\E_{\P\otimes\mu}\exp\left[\left\langle P_{n}Y\left(t\right),X\right\rangle -\frac{1}{2}\norm{P_{n}Y\left(t\right)}^{2}\right]G\left(M\left[\mathcal{T}\right]\right)\\
=\E\left[\intop\exp\left[\left\langle P_{n}Y\left(t\right),X\right\rangle -\frac{1}{2}\norm{P_{n}Y\left(t\right)}^{2}\right]\mu\left(dt\right)\cdot G\left(M\left[\mathcal{T}\right]\right)\right]\\
=\E M_{P_{n}Y}\left[\mathcal{T}\right]G\left(M\left[\mathcal{T}\right]\right).
\end{multline*}

Since $M_{P_{n}Y}\left[\mathcal{T}\right]=\Ec{M_{Y}\left[\mathcal{T}\right]}{P_{n}X}$
is a uniformly integrable martingale that converges to $M_{Y}\left[\mathcal{T}\right]$,
we have
\[
\liminf_{n\to\infty}\E_{\P\otimes\mu}\exp\left[\left\langle P_{n}Y\left(t\right),X\right\rangle -\frac{1}{2}\norm{P_{n}Y\left(t\right)}^{2}\right]G\left(M\left[\mathcal{T}\right]\right)=\E M_{Y}\left[\mathcal{T}\right]G\left(M\left[\mathcal{T}\right]\right).
\]
Together with \eqref{eq:EqTrick2LimInf}, this proves the claim.
\end{proof}

\subsection{Proof of Lemmas \ref{lem:Compactness} and \ref{lem:MeasurabilityImpliesConvergence}
\label{sub:Basic}}
\begin{proof}
[Proof of Lemma \ref{lem:Compactness}]

By our assumptions $\left(\mathcal{T},\mu\right)$ is a standard measure
space, so we may assume that $\mathcal{T}=\left[0,1\right]$ with
its Borel $\sigma$-algebra. We can also identify $X$ (and other
generalized random vectors in $H$) with a random element in the Polish
space $\RR^{\infty}$, equipped with the isomorphism $H\simeq\ell^{2}\subset\RR^{\infty}$.

The family $\left\{ M_{Y_{n}}\left[\mathcal{T}\right]\right\} $ is
tight, therefore the family $\left\{ \left(X,M_{\alpha}\right)\right\} $
of random elements of $\RR^{\infty}\times\op{Measures}\left(\mathcal{T}\right)$
is tight when the space of measures is equipped with the weak topology.
Thus for some subsequence $\left(n^{\prime}\right)$ there is a distributional
limit $\left(X,M\right)$ of $\left(X,M_{Y_{n^{\prime}}}\right)$,
possibly on an extended probability space. This implies \eqref{eq:JointLimit}
for continuous $f$ and $\xi\in H$ with finitely many nonzero coordinates.

It is easy to see that the family of maps 
\[
H\times L^{1}\left(\mu\right)\to L^{2}\left(\P\right)\times L^{1}\left(\P\right),
\]
\[
\left(\xi,f\right)\mapsto\left(\left\langle X,\xi\right\rangle ,\intop f\left(t\right)M_{Y_{n^{\prime}}}\left(dt\right)\right),
\]
 is equicontinuous, so \eqref{eq:JointLimit} follows for all $\xi\in H,f\in L^{1}$.

The equality $\E M=\mu$ follows from $\E M_{Y_{n^{\prime}}}=\mu$
together with the uniform integrability of $\left\{ M_{Y_{n}}\left[\mathcal{T}\right]\right\} $.
\end{proof}

\begin{proof}
[Proof of Lemma \ref{lem:MeasurabilityImpliesConvergence}]

Consider the sequence of triples $\left(X,M_{Y_{n}},M\right)$. By
the same reasoning as in Lemma \ref{lem:Compactness}, it has a subsequential
distributional limit. Since $\left(X,M_{Y_{n}}\right)\overset{\Law}{\to}\left(X,M\right)$,
this distributional limit has the form $\left(X,M,M^{\prime}\right)$,
where $\left(X,M\right)$ and $\left(X,M^{\prime}\right)$ have the
same joint distribution. But since $M$ (and therefore also $M^{\prime}$)
is a function of $X$, we have $M=M^{\prime}$. This implies, in particular,
that $\left(M_{Y_{n}},M\right)\overset{\Law}{\to}\left(M,M\right)$,
so for every function $f\in L^{1}\left(\mu\right)$ 
\[
\intop f\left(t\right)M_{Y_{n}}\left(dt\right)-\intop f\left(t\right)M\left(dt\right)\overset{\Law}{\to}\intop f\left(t\right)M\left(dt\right)-\intop f\left(t\right)M\left(dt\right)=0.
\]
Convergence in law to a constant is equivalent to convergence in probability,
so $\intop f\left(t\right)M_{Y_{n}}\left(dt\right)\overset{L^{0}}{\to}\intop f\left(t\right)M\left(dt\right)$.
By uniform integrability, this upgrades to convergence in $L^{1}$.
\end{proof}

\subsection{A remark on the ``$\exp K$ lemma'' \label{sub:Remark}}

Lemma \ref{lem:ExpK}, which is of central importance in the proof
of Theorem \ref{thm:Approximation}, has other uses as well and may
be of independent interest. In particular, note that the property
\eqref{eq:SecondMomentAC} with $Z=W$ implies immediately that if
$\mu$ has no atoms then almost surely $M_{Z}$ has no atoms. Indeed:
\[
\E\sum_{t\in\left\{ \text{atoms of }M_{Z}\right\} }\left(M_{Z}\left\{ t\right\} \right)^{2}=\E\intop_{\op{diag}\mathcal{T}}M_{Z}\otimes M_{Z}=\intop_{\diag\mathcal{T}}\E\left[M_{Z}\otimes M_{Z}\right]=0,
\]
where $\diag\mathcal{T}:=\left\{ \left(t,t\right)\midmid t\in\mathcal{T}\right\} \subset\mathcal{T}\times\mathcal{T}$.
The measure $\E\left[M_{Z}\otimes M_{Z}\right]$ may ``explode along
the diagonal'', yet it assigns mass $0$ to it.

Recently nonatomicity was proven for some \emph{critical} GMCs ---
namely, those over critical ($\gamma^{2}=2d$) logarithmic fields
\cite{DSRVCriticalConvergence} and the related hierarchical fields,
also known as multiplicative cascades \cite{BKNSWCritical}. In the
general case (i.e. without the subcriticality assumption) nonatomicity
remains an open question.

\section{Acknowledgments}

The author wishes to thank Ofer Zeitouni for many valuable comments
that helped shape the manuscript.

The work is partially supported by Israel Science Foundation grants
111/11 and 147/15.

\appendix

\section{Appendix: the Maurey-Nikishin factorization theorem}

The following factorization theorem follows trivially from the $q:=2$
case of \cite[Théorème 3 b)]{Maurey2}:
\begin{thm}
[Nikishin, Maurey] Let $H$ be a Hilbert space, let $\left(\mathcal{T},\mu\right)$
be a standard probability space. Let $Y:H\to L^{0}\left(\mu\right)$
be a continuous linear operator. Then there exists a probability measure
$\mu^{\prime}$ equivalent to $\mu$, and a bounded operator $Y^{\prime}:H\to L^{2}\left(\mu^{\prime}\right)$,
such that $Y$ factors through the tautological embedding $\id:L^{2}\left(\mu^{\prime}\right)\to L^{0}\left(\mu\right)$
as follows:
\[
\xymatrix{H\ar[rr]^{Y}\ar@{-->}[dr]_{Y^{\prime}} &  & L^{0}\left(\mu\right)\\
 & L^{2}\left(\mu^{\prime}\right)\ar[ur]_{\id}
}
\]
\label{thm:Factorization}
\end{thm}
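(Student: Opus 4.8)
The plan is to reduce the statement to the production of a single weight. Since for any $\mu'\sim\mu$ the tautological embedding $\id:L^{2}\left(\mu'\right)\to L^{0}\left(\mu\right)$ is automatically continuous (convergence in $L^{2}\left(\mu'\right)$ forces convergence in $\mu'$-measure, hence in $\mu$-measure, the null sets being the same), and $Y=\id\circ Y'$ as soon as $Y'$ denotes the operator $\xi\mapsto\left\langle Y,\xi\right\rangle$ regarded as $L^{2}\left(\mu'\right)$-valued, everything comes down to producing a probability density $h>0$ $\mu$-a.e. and a constant $C$ with
\[
\intop\left|\left\langle Y\left(t\right),\xi\right\rangle\right|^{2}h\left(t\right)\mu\left(dt\right)\le C\norm\xi^{2}\qquad\left(\forall\xi\in H\right),
\]
and then setting $\mu':=h\mu$. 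The first elementary input is a weak-type bound extracted from continuity of $Y$ at $0$: for every $\eta>0$ there is $M=M\left(\eta\right)$ with $\mu\{\left|\left\langle Y,\xi\right\rangle\right|>M\norm\xi\}\le\eta$ for all $\xi$, obtained by pulling back the $L^{0}$-neighbourhood $\{f:\mu\{\left|f\right|>1\}\le\eta\}$ of $0$ under $Y$ and using homogeneity.

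Next I would pass from single vectors to quadratic sums. Consider the convex set
\[
\mathcal C:=\left\{\textstyle\sum_{j=1}^{k}\left|\left\langle Y,\xi_{j}\right\rangle\right|^{2}\;\middle|\;k\in\NN,\ \sum_{j}\norm{\xi_{j}}^{2}\le1\right\}\subset L^{0}_{+}\left(\mu\right);
\]
convexity holds because $\tfrac12\sum_{i}\left|\left\langle Y,\xi_{i}\right\rangle\right|^{2}+\tfrac12\sum_{j}\left|\left\langle Y,\zeta_{j}\right\rangle\right|^{2}=\sum_{i}\left|\left\langle Y,\xi_{i}/\sqrt2\right\rangle\right|^{2}+\sum_{j}\left|\left\langle Y,\zeta_{j}/\sqrt2\right\rangle\right|^{2}$ again has total squared norm $\le1$. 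The crucial claim, and the main obstacle, is that $\mathcal C$ is \emph{bounded in probability}, i.e. $\sup_{\phi\in\mathcal C}\mu\{\phi>\lambda\}\to0$ as $\lambda\to\infty$; this is exactly where the Hilbert (type $2$) structure enters. Introduce i.i.d. standard Gaussians $g_{1},\dots,g_{k}$. For fixed $t$ the variable $\sum_{j}g_{j}\left\langle Y\left(t\right),\xi_{j}\right\rangle=\left\langle Y\left(t\right),\sum_{j}g_{j}\xi_{j}\right\rangle$ is centred Gaussian of variance $\phi\left(t\right)=\sum_{j}\left|\left\langle Y\left(t\right),\xi_{j}\right\rangle\right|^{2}$, while $\E\norm{\sum_{j}g_{j}\xi_{j}}^{2}=\sum_{j}\norm{\xi_{j}}^{2}\le1$, so $\P\{\norm{\sum_{j}g_{j}\xi_{j}}>2\}\le\tfrac14$ by Markov. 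Applying the weak-type bound (with $\eta=\tfrac18$, say) to the fixed vector $\sum_{j}g_{j}\xi_{j}$ on the event $\{\norm{\sum_{j}g_{j}\xi_{j}}\le2\}$ and integrating in $t$ by Fubini gives $\left(\mu\otimes\P\right)\{\left|\left\langle Y,\sum_{j}g_{j}\xi_{j}\right\rangle\right|>2M,\ \norm{\sum_{j}g_{j}\xi_{j}}\le2\}\le\eta$. On the other hand, for every $t$ with $\phi\left(t\right)\ge\Lambda$, where $\Lambda$ is chosen so large that a centred Gaussian of variance $\Lambda$ exceeds $2M$ in modulus with probability $\ge\tfrac34$, the $g$-probability of that event is $\ge\tfrac34-\tfrac14=\tfrac12$. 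Comparing the two bounds yields $\mu\{\phi\ge\Lambda\}\le2\eta$, uniformly over all configurations with $\sum_{j}\norm{\xi_{j}}^{2}\le1$; letting $\eta\to0$ (so $\Lambda=\Lambda\left(\eta\right)\to\infty$) is precisely boundedness of $\mathcal C$ in probability.

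Granting this, I would invoke Nikishin's lemma: a convex $\Phi\subset L^{0}_{+}\left(\mu\right)$ bounded in probability admits, for each $\delta>0$, a set $A$ with $\mu\left(\mathcal T\setminus A\right)<\delta$ and a constant $C$ such that $\intop_{A}\phi\,d\mu\le C$ for all $\phi\in\Phi$. Its proof is an exhaustion/averaging argument: if no such pair $\left(A,C\right)$ existed, one could choose finitely many elements of $\Phi$ concentrating large mass on essentially disjoint sets and average them; the average still lies in $\Phi$ by convexity yet is large on a set of measure $\ge\delta$, contradicting boundedness in probability. Applying this with $\delta=2^{-k}$ produces sets $A_{k}$ and constants $C_{k}$; since $\sum_{k}\mu\left(A_{k}^{c}\right)<\infty$, Borel--Cantelli shows that $h:=Z^{-1}\sum_{k}2^{-k}C_{k}^{-1}\I_{A_{k}}$ is strictly positive $\mu$-a.e. (after normalising $\intop h\,d\mu=1$), while $\intop\phi h\,d\mu\le Z^{-1}\sum_{k}2^{-k}<\infty$ for every $\phi\in\mathcal C$, in particular for $\phi=\left|\left\langle Y,\xi\right\rangle\right|^{2}$ with $\norm\xi\le1$. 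By homogeneity this is the sought bound $\intop\left|\left\langle Y,\xi\right\rangle\right|^{2}h\,d\mu\le C\norm\xi^{2}$, and setting $\mu':=h\mu$ together with $Y':=\bigl(\xi\mapsto\left\langle Y,\xi\right\rangle\bigr):H\to L^{2}\left(\mu'\right)$ completes the factorization, with $\mu'\sim\mu$ since $h>0$ a.e.
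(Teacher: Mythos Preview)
The paper does not actually prove this theorem: it merely records that the statement ``follows trivially from the $q:=2$ case of \cite[Th\'eor\`eme~3~b)]{Maurey2}'' and then uses it as a black box. So there is no in-paper argument to compare against.

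Your proposal is a correct self-contained reconstruction of the classical Nikishin--Maurey argument. The reduction to finding a strictly positive density $h$ is right; the weak-type bound from continuity at $0$ is the standard first step; the Gaussian randomization (exploiting that $H$ has type $2$) to pass from single vectors to the convex hull $\mathcal C$ of quadratic sums is exactly the mechanism that makes the Hilbert-space hypothesis do work, and your two-sided Fubini estimate is carried out cleanly. The final gluing of the sets $A_k$ into a weight via Borel--Cantelli is also fine (one should take $C_k\ge1$ without loss of generality so that $\sum_k 2^{-k}C_k^{-1}<\infty$, but this is harmless).

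The one place where you are genuinely sketchy is the proof of Nikishin's lemma itself: the sentence ``one could choose finitely many elements of $\Phi$ concentrating large mass on essentially disjoint sets and average them'' hides a nontrivial exhaustion argument. Since this lemma is classical and you name it as such, citing it is acceptable at the level of the paper; but if you intend the write-up to be fully self-contained, that paragraph would need to be expanded into an actual construction (e.g.\ the standard maximal-function or layer-cake extraction).
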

The factorization theorem is the ultimate reason behind all occurrences
of ``an equivalent measure $\mu^{\prime}\sim\mu$'' throughout the
text.

Using this theorem we show that the definitions of Gaussian fields
in terms of a couple $\left(X,Y\right)$ and a jointly Gaussian family
of integrals against $L^{2}\left(\mu^{\prime}\right)$ test functions
are equivalent.

Let $X$ and $Y$ be a standard Gaussian in $H$ and a generalized
$H$-valued function defined on $\left(\mathcal{T},\mu\right)$, respectively.
Then by Theorem \ref{thm:Factorization} there exists an equivalent
measure $\mu^{\prime}\sim\mu$, such that
\begin{equation}
\forall\xi\in H:\intop\left|\left\langle Y\left(t\right),\xi\right\rangle \right|^{2}\mu^{\prime}\left(dt\right)<\infty.\label{eq:FactorizationY}
\end{equation}
Now fix some function $f\in L^{2}\left(\mu^{\prime}\right)$. It follows
from \eqref{eq:FactorizationY} that for every $\xi\in H$ the function
$f\cdot\left\langle Y,\xi\right\rangle $ is in $L^{1}$. In other
words, $fY$ has a weak first moment $\E_{\mu^{\prime}}fY$. Thus
one can define for every test function $f\in L^{2}\left(\mu^{\prime}\right)$
a Gaussian random variable $\left\langle X,\E_{\mu^{\prime}}fY\right\rangle $,
which is to be interpreted as the integral of the ``Gaussian field
$\left(\left\langle X,Y\left(t\right)\right\rangle \right)$'' against
the test function $f$.

Conversely, suppose that we have a map that takes any test function
$f\in L^{2}\left(\mu^{\prime}\right)$ to a measurable linear functional
of some Gaussian vector $X$, i.e. a variable of the form $\left\langle X,Af\right\rangle $
for some $Af\in H$, such that the operator $A:L^{2}\left(\mu^{\prime}\right)\to H$
is bounded. Then one can define a generalized random vector $Y$ as
the composition $H\overset{A^{\ast}}{\to}L^{2}\left(\mu^{\prime}\right)\overset{\id}{\to}L^{0}\left(\mu\right)$.

Thus we have constructions that produce a couple $\left(X,Y\right)$
from a field defined by test functions and vice versa. The verification
that they are inverse to each other reduces to the routine unraveling
of the definitions which is left to the reader.

\bibliographystyle{abbrv}
\bibliography{MultiplicativeChaos}

\end{document}